\documentclass[11pt,a4paper]{amsart}
\usepackage[english]{babel}
\usepackage{amsmath,amsfonts,amssymb,relsize,amsthm}
\usepackage{dsfont}  
\usepackage{esint}
\usepackage{graphicx}
\usepackage{subcaption}
\usepackage{subcaption}

  \usepackage{hyperref} %%Warning: when you first run your tex
 \usepackage[active]{srcltx} %%allows you to jump from your editor
 \usepackage{color,soul} %%allows you to use the \st{...} command, which
\usepackage{cleveref}
\usepackage{mathtools}
\usepackage[left=2.5cm,right=2.5cm,top=2.5cm,bottom=2.5cm]{geometry}
\makeatletter
\newtheorem*{rep@theorem}{\rep@title}
\newcommand{\newreptheorem}[2]{%
\newenvironment{rep#1}[1]{%
 \def\rep@title{#2 \ref{##1}}%
 \begin{rep@theorem}}%
 {\end{rep@theorem}}}
\makeatother

\newtheorem{theorem}{Theorem}[section]

\newtheorem{lemma}[theorem]{Lemma}
\newtheorem{proposition}[theorem]{Proposition}
\newtheorem{hyp}{Hypothesis}[section]
\newtheorem*{ocp}{Optimal Control Problem}
\newtheorem*{rcp}{Reachability Problem}
\theoremstyle{definition}

\newtheorem{remark}{Remark}
\newtheorem{propri}{Property}

\numberwithin{equation}{section}

\newreptheorem{theorem}{Theorem}
\newreptheorem{proposition}{Proposition}
\newreptheorem{lemma}{Lemma}
\newreptheorem{corollary}{Corollary}

\def\O{{\Omega}}
\def\o{{\omega}}
\def\eps{{\varepsilon}}

\def\h{{\mathcal{H}}}

\def\k{{\mathcal{K}}}
\def\l{{\mathcal{L}}}

\def\r{{\mathcal{R}}}
\def\s{{\mathcal{S}}}

\def\M{{\mathcal{M}}}
\def\D{{\mathcal{D}}}
\def\U{{\mathcal{U}}}
\def\F{{\mathcal{F}}}
\def\T{{\mathcal{T}}}

\def\R{{\mathbb{R}}}

\def\N{{\mathbb{N}}}
\def\Q{{\mathbb{Q}}}

\newcommand{\opk}[1]{\k[{#1}]}

\newcommand{\opr}[1]{\r[{#1}]}
\newcommand{\ops}[2]{\s_{_{#2}}[{#1}]}

\DeclareMathOperator{\supp}{supp}
\newcommand{\vphi}{\varphi}
\newtheorem{claim}[theorem]{Claim}

\usepackage[breakable,skins,theorems]{tcolorbox}

\definecolor{b}{rgb}{0.00000,0.44700,0.74100}%
\definecolor{o}{rgb}{0.85000,0.32500,0.09800}%
\definecolor{y}{rgb}{0.92900,0.69400,0.12500}%
\definecolor{g}{rgb}{0.46600,0.67400,0.18800}%
\definecolor{p}{rgb}{0.49400,0.18400,0.55600}%
\definecolor{inrae}{RGB}{0,163,166}
\definecolor{k}{rgb}{0,0,0}%
\definecolor{gray}{rgb}{0.75,0.75,0.75}%
\definecolor{pink}{rgb}{1,0.7529,0.7961}%
\definecolor{r}{rgb}{0.6350,0.0780,0.1840}%
\definecolor{cyan}{rgb}{0.3010,0.7450,0.9330}%
\hypersetup{colorlinks = true, linkcolor=b, citecolor=o}

\newtcolorbox[]{blockboxbis}[2][]{%
	colback=r!5,%
	colframe=r!75!k,%orange!75!red!80!black,%
	colbacktitle=white,%
	coltitle=r!75!k,%orange!75!red, 
	top=7pt,
	fonttitle=\bfseries,%
	title={#2},%
	enhanced,%
}
\newenvironment{formula}[1]{\begin{equation}\label{eq:#1}}
                       {\end{equation}\noindent}

\def\Fi#1{\begin{formula}{#1}}
\def\Ff{\end{formula}\noindent}

\setstcolor{red}

\def\ds{\displaystyle}

\author{Claudia Alvarez-Latuz}
\address{Avignon Universit\'e, LMA, UPR 2151, Avignon, France}
\address{UR 546 Biostatistique et Processus  Spatiaux, INRAE, Domaine St Paul Site Agroparc, F-84000 Avignon, France,}
\email{claudia.alvarez-latuz@univ-avignon.fr}
\author{T\'erence Bayen} 
\address{Avignon Universit\'e, LMA, UPR 2151, Avignon, France}
\email{terence.bayen@univ-avignon.fr}
\author{J\'er\^ome Coville}
\address{UR 546 Biostatistique et Processus  Spatiaux, INRAE, Domaine St Paul Site Agroparc, F-84000 Avignon, France}
\address{ICJ UMR 5208 - Universite Claude Bernard Lyon 1, Campus de la Doua, 69622 Villeurbanne, France}
\email{jerome.coville@inrae.fr} 

\title[Target controllability for a  minimum time in chemostat model]{ Target controllability for a minimum time problem in a trait-structured chemostat model}

\begin{document}
\begin{abstract}
In this paper, we consider a minimum time control problem governed by a trait-structured chemostat model including mutation and one limiting substrate. 
Our first main result proves the well-posedness of the control-to-state mapping.  
We subsequently analyze the class of {\it{auxostat-type controls}}, feedback laws designed to regulate substrate concentration, and prove that the corresponding solutions converge to a stationary state of the system. 
These convergence results are used to show the reachability of a target set corresponding to the selection of a population with a low weighted averaged half-saturation constant. Finally, we show the existence of an optimal control for the minimum time problem associated with reaching the target set. These theoretical findings are completed by numerical simulations. 
 \end{abstract}
 
\maketitle

\section{Introduction}{\label{sec-intro}}

\subsection{General context}
Continuous culture can be used to select through competition specific microorganisms of interest from a pool of species initially present or appearing by mutation. Since its introduction in the fifties by Monod \cite{Monod1942,Monod1950} and Novik and Szilard \cite{Novick1950}, the chemostat system has found widespread applications over the past 75 years in a variety of contexts \cite{Balagadde2005,dykhuizen1993,Harmand2017,Keevil2001,Smith1995}. In particular, it has provided valuable insights in microbiology into the adaptive mechanisms involved in responses to controlled environments (see, {\it{e.g.}},  \cite{Cox1974,dykhuizen1993,Ferenci1999,Ferenci2006,Gresham2015,Hoskisson2005}). 
Although chemostat systems are for most of them designed to study a finite number of population, continuous versions of it with infinitely many type of population have also been introduced, see  \cite{Champagnat2011,Diekmann2005,Leman2015,Mirrahimi2012a,Perthame2007}.
In this spirit we consider here the growth of a population structured by a phenotypical trait $z$ living in a limited substrate environment,  all traits competing with each other through the access of the resource. 
A key aspect of the model is that 
phenotypical changes in the population are taken into account through the inclusion of mutation. 
%the description of possible mutation. 
Following the modeling approach proposed by \cite{Champagnat2014a,Champagnat2011,Coville2012a,Diekmann2005,Leman2015,Mirrahimi2012a,Perthame2007}, the  dynamics of this population can be represented by the following set of integro-differential equations: 
\begin{equation}\label{main}
\begin{dcases}
\partial_t f(t,z)=(\mu(s(t),z)- u(t))f(t,z) +\alpha \Delta_z f(t,z) \quad &\text{ for   }\quad t>0, z\in\O,\\
\dot s(t)=-\int_{\O}\mu(s(t),z')f(t,z')\,dz' +u(t)(s_{in}-s(t))\quad &\text{ for   }\quad t>0, \\
\partial_{\vec{n}} f(t,z)=0 \quad &\text{ for   }\quad t>0, z\in\partial\O,\\
s(0)= s_{0}>0,\quad f(0,z)=f_0(z)\ge 0 \quad &\text{ for   }\quad z\in\O,
\end{dcases}
\end{equation}
where the function $s$ is the substrate concentration and the function $f$ represents the density of the population under consideration growing on the substrate. Here, the constant $s_{in}>0$ is the input substrate concentration, $s_0$ is the initial substrate concentration, $f_0$, the initial density of the population, $\mu$ is the growth function (or kinetics) and  the function $u\ge 0$ is the dilution rate which serves as the control input for the system.  The distinction between the different species is made through the function $\mu$ which depends on the trait $z\in\O$, where $\Omega$ is a bounded 
domain of $\R^d$ with smooth boundary and unit normal vector $\vec{n}$.  
The mutation process, which can be interpreted as movement in the trait space, is here considered to be driven by diffusion and is therefore modeled by the Laplace operator, where  $\alpha\ge 0$ represents the mutation rate.  Additionally, the Neumann boundary condition reflects that the population does not convert into or from traits beyond the limits of the domain.  
This work is devoted to the analysis of~\eqref{main}, including existence and uniqueness of solutions, stabilization properties, reachability of a given target set, and the associated minimum-time control problem.

\subsection{Related models} 
In the literature, numerous results have been established within the framework of adaptive dynamics, considering both deterministic and stochastic settings. These include findings on the convergence of solutions to stationary states, as well as descriptions of the asymptotic limits as the mutation parameter $\alpha\downarrow 0$. Given the extensive scope of existing work, we highlight here a selection of relevant contributions. First, the article \cite{Champagnat2010} investigates an extension of the competitive Lotka-Volterra system and proves convergence to a unique stable equilibrium point. Note also that concentration phenomena in the limit of vanishing mutation rates are examined in \cite{Champagnat2014a,Champagnat2011}, in both deterministic and stochastic contexts. Another relevant contribution is the analysis of a non-local Lotka-Volterra mutation-competition model in \cite{Lorz2011}. 
In \cite{Diekmann2005}, a selection-mutation model with asymptotically small mutation parameter is studied based on the HJB equation.
Extinction phenomena in a stochastic chemostat model are addressed in \cite{Collet2013}, while \cite{Mirrahimi2012a} studies a trait-structured chemostat-type model and provides sufficient conditions for population survival. The relationship between deterministic and stochastic formulations of the chemostat is examined in \cite{campillo2011stochastic}. 
Finally, it is worth mentioning \cite{djidjou2025evolutionary} which studies a related model in which the mutation term differs : whereas we consider a Laplacian operator, mutation in \cite{djidjou2025evolutionary} is modeled through a kernel function $K(z,z')$ representing the probability density of the phenotype $z'$ to be a mutant of (initial) phenotype $z$ (which is a non-local operator). In addition, our work addresses optimal control questions. 

\subsection{A minimum time control problem}
While existence and uniqueness of solutions in our setting follows from rather classical arguments, much less is known about controllability and optimal control, particularly for model \eqref{main}.  To study this, let us define an objective function. The question is whether one can  design an optimal control strategy, {\it{i.e.}}, an admissible control function $u$, optimizing a desired objective function. 
In the context of~\eqref{main}, this objective raises significant challenges. The main aim of this article is therefore to clarify whether the system~\eqref{main} can be controlled, with the following minimum-time control problem in mind:
\begin{ocp} 
    Let $\mathcal{F}$ be a set of initial condition for \eqref{main} together with a 
    target set $\T_0$. Given $(s_0,f_0)\in \mathcal{F}$, the goal is to find an optimal control $u$ such that the solution to \eqref{main} 
     reaches the target set  $\T_0$ in minimal time and stays in it for all subsequent times,  {\it{i.e.}}, 
    \begin{equation}{\label{OCP0}}\inf_{u\in\U} T_u \quad {\rm{s.t.}} \quad f(t,\cdot)\in \T_0 \quad\text{ {\rm{for all}} }\, t\ge T_u,\end{equation}
where $\mathcal{U}$ is the set of admissible controls and $f$ the solution to \eqref{main} for the control $u$. 
\end{ocp}

Optimization of the chemostat system, particularly through minimum-time problems, has many practical applications in bioprocesses, such as wastewater treatment and the selection of species of interest from a mixed population (see, {\it{e.g.}},  \cite{bayen2013optimal,Bayen2017,djema2025optimal}). Other criteria, such as productivity, can also be considered, as in~\cite{Bayen2022}. In the present work, we extend this line of inquiry to model \eqref{main}, which incorporates a trait.  
In particular, we choose the target set $\mathcal{T}_0$ following the approach of \cite{Bayen2017}, with the specific biological aim of selecting populations of interest (which is a common objective in the control of bioprocesses). 
Minimum time problems in infinite-dimensional settings (see, {\it{e.g.}}, \cite{fattorini1999infinite,loheac2017minimal,wang2012equivalence,wang2021minimal}) are generally more challenging than their finite-dimensional counterparts. In the latter, the application of the Pontryagin Maximum Principle and related techniques is standard, whereas their extension to infinite dimensions faces substantial obstacles. 
One important aspect of problem \eqref{OCP0} is the nonlinearity of the state equation, in contrast with related works.
Another distinctive feature of problem \eqref{OCP0} is the requirement that the state, upon first reaching the target, must stay within it (see also \cite{wang2021minimal}). This “reach-and-stay" condition is more demanding than the classical reachability requirement, yet we will show that it can be achieved under suitable assumptions.

 \subsection{Notation and assumptions} 
The following notations will be used throughout the paper.
\begin{itemize}
    \item The notations $\R_+$ and $\R_+^*$ refer to the intervals $[0,+\infty)$ and $(0,+\infty)$ respectively.
    \item The set $\O$ is a bounded domain in $\R^d$ ($d\geq 1$) with a smooth boundary (of class $C^1$) ; $\partial \O$ and $\text{int}(\Omega)$ denote respectively its boundary and its interior. 
    \item Given a Borel set in $\R^k$, for $k\geq 1$, $|A|$ and $\mathds{1}_A$ stand for 
    the Lebesgue measure of $A$ and the indicator function of $A$, respectively. 
    \item $B_R$ and $B_R(x)$ denote the open balls of radius $R$ centered  at $0$ and $x$ respectively. 
    \item $\M(\O)$ and $\M^+(\O)$ denote the spaces of finite signed and positive Radon measures on $\O$, respectively. 
    \item For brevity, we will often write $L^p$ and $L^{\infty}$ for the Lebesgue spaces $L^{p}(\Omega)$ and $L^{\infty}(\Omega)$.
    \item The $L^p$-norm of a function $g$ in $L^p$ ($1 \leq p <+\infty$) is written
    $\|g\|_p :=\left(\int_{\O} |g(z)|^p\,dz\right)^{\frac{1}{p}} $.
    \item $C(I)$ is the Banach space of continuous real-valued functions on $I$. For an interval $J\subset \R$, $C(I,J)$ denotes the subset of $C(I)$ with values in $J$. 
    \item For a Banach space $E$, the set $C(I,E)$ denotes continuous functions with values in $E$. 
    We alert the reader that we use both the notation $C(I,J)$ (when $J$ is an interval) and $C(I,E)$ when $E$ is a Banach space. 
    \item For $(k,\beta) \in \mathbb{N} \times  (0,1)$, the set $C^{k,\beta}(I,E)$, resp.~$C^{k,\beta}_{loc}(I,E)$, denotes functions that are  H\"older continuous, resp.~locally H\"older continuous of order $(k,\beta)$ with values in $E$. 
    \item For $(k,\beta) \in \mathbb{N} \times  (0,1)$, $C^{k,\beta}(\Omega)$, resp.~$C^{k,\beta}_{loc}(\Omega)$, denotes real-valued functions that are  H\"older continuous, resp.~locally H\"older continuous of order $(k,\beta)$.
    \item For $f, \tilde f\in C(\R,L^1)$, $m(t)$ and $\tilde m(t)$ stand for $\|f\|_1(t)$ and $\|\tilde f\|_{1}(t)$, respectively.
    \item The admissible control set is 
    $\U:=\{u:\R_+\rightarrow [0,u_{max}] \; | \; u  \textrm{ meas.} \}$ with $u_{max}>0$.
    Admissible controls are denoted by $u$.
    \item The time derivative of a function $F:\R\rightarrow \R$ is written 
        $\dot{F}$ or $\frac{d}{dt}F(t)$. 
        
\end{itemize}
Returning to the modeling assumptions (particularly that the growth function is always null in absence of substrate in a chemostat system), we adopt the following hypotheses for the kinetics.
\begin{hyp}\label{hyp0}
    $\mu\in C^{1}_{loc}(\R_+,C^{1}(\Omega))$ ; $\mu(s,\cdot)\in L^{\infty}(\O)\cap C^{0,1}(\O)$ for all $s>0$ ; $\mu(s,z)>0$ for all $(s,z)\in \R_+^*\times \bar \O$ and $\mu(0,z)=0$ for all $z\in \O$.
\end{hyp}
Additionally, we impose the following boundedness condition on the kinetics and a corresponding Lipschitz assumption.
\begin{hyp}\label{hyp1}
    There exists $\hat \mu>0$ such that for all $s\ge 0$ 
$$
\sup_{z \in \O} \mu(s,z)\le \hat \mu s,
$$
and for all $S\geq 0$ there is $b_S>0$ such that for all $s,s' \in [0,S]$ and for all  $z\in\O$, 
$$ 
  |\mu(s,z)-\mu(s',z)|\le b_S |s-s'|. 
$$
\end{hyp}

\subsection{Overview of the contribution and organization} 
This section presents the paper's main results in detail. 
 \subsubsection{Existence results for \eqref{main}}% (section \ref{sec1})}
To properly handle the control problem, it is important to study the input-output map, which associates a solution of the coupled system to a given control, and to show that it is well-defined.
That is why, we will first establish in Section~\ref{sec1} the existence 
and uniqueness of positive (mild and classical) solutions to \eqref{main}, given some admissible control.  
To understand the difficulty of obtaining such results, assume for the moment that the kinetics $\mu$ is of Monod type, that is, 
\begin{equation}{\label{monod-eq}}
\mu(s,z):= \frac{\bar \mu s}{r(z)+s},
\end{equation}
where $r\in C(\O)$, $r>0$ over $\Omega$, and $\bar \mu>0$. 
The  problem \eqref{main} then rewrites:
$$
\begin{dcases}
\partial_t f(t,z)=\left[\frac{\bar\mu s(t)}{r(z)+s(t)}- u(t)\right]f(t,z) +\alpha \Delta_{z} f(t,z) \quad &\text{ for }\quad t>0, z\in\O,\\
\dot s= -\bar\mu\int_{\O}\frac{s(t)}{r(z')+s(t)}f(t,z')\,dz' + u(t)(s_{in}-s(t))\quad &\text{ for }\quad t>0,\\
\partial_{\vec{n}} f(t,z)=0 \quad &\text{ for }\quad t>0, z\in\partial\O,\\
s(0)=s_0>0,\qquad f(0,z)=f_0(z) \quad &\text{ for }\quad z\in\O.\\
\end{dcases}
$$
When the dilution rate $u$ is smooth, it is reasonable to think that the above nonlinear problem is well posed and that the solution will be smooth thanks to the general parabolic theory. However, when $u$ is only measurable (for instance, whenever $u$ is of bang-bang type), this fact is not clear anymore. 
Our first result is to define properly a solution to \eqref{main} and to show that for any admissible control function, the system \eqref{main} admits a unique, well-defined solution.  
In this direction, our main result is as follows. The result is stated for arbitrary kinetics (not restricted to Monod kinetics), provided suitable assumptions are satisfied.

\begin{theorem}
\label{th-existence}
Suppose that $\alpha>0$ and that $\mu$ satisfies Hypotheses \ref{hyp0}-\ref{hyp1}. 
Then, for every admissible control $u$, and for every initial data $s_0\in (0,s_{in})$ and $f_0\in L^1(\O)$ such that $f_0\ge 0$, there is a unique pair of positive functions $(s,f)$ with  $s\in C^{1}(\R^*_{+},[0,s_{in}])\cap C(\R_{+},[0,s_{in}])$ and $f\in C(\R_+,L^{1}(\O))\cap C^{0,1}_{loc}(\R_+^*,C^{2,\beta}(\O)\cap C^{0,\beta}(\bar \O))$ such that $(s,f)$ is a mild solution to %the system 
\eqref{main}. Moreover, if $u\in C(\R)$, then $f$ is a strong solution and $f\in C^1(\R^*_+,C^{2,\beta}(\O)\cap C^{0,\beta}(\bar \O))$. 
\end{theorem}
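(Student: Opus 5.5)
The plan is a fixed point in the substrate variable: the PDE for $f$ is linear once $s$ is known, and the mass equation is scalar. Fix $T>0$ and an admissible $u$. For a given $\sigma\in C([0,T],[0,s_{in}])$, the equation
\[
\partial_t f=(\mu(\sigma(t),z)-u(t))f+\alpha\Delta_z f,\qquad \partial_{\vec n}f=0,
\]
is linear with a bounded, measurable-in-time potential $\mu(\sigma(t),\cdot)-u(t)\in L^\infty$ (Hypothesis \ref{hyp1}). We define its mild solution through the Neumann heat semigroup $e^{t\alpha\Delta}$ on $L^1(\O)$, which is analytic, positive and contractive, via the Duhamel formula
\[
f(t)=e^{t\alpha\Delta}f_0+\int_0^t e^{(t-\tau)\alpha\Delta}\big[(\mu(\sigma(\tau),\cdot)-u(\tau))f(\tau)\big]d\tau .
\]
A Picard iteration in $C([0,T],L^1)$ gives a unique solution $f^\sigma\ge0$; positivity is seen by writing the potential as $\mu-u+u_{max}\ge0$ and using $e^{-u_{max}t}$. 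Gronwall gives $\|f^\sigma(t)\|_1\le e^{\hat\mu s_{in}t}\|f_0\|_1$. Next we define $\Phi(\sigma)=s$, where $s$ solves the scalar ODE
\[
\dot s=-\int_\O\mu(s,z)f^\sigma(t,z)\,dz+u(s_{in}-s),
\]
which is Carathéodory and locally Lipschitz in $s$ by Hypothesis \ref{hyp1}.

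The invariance of $[0,s_{in}]$ follows from $\mu(0,\cdot)=0$: at $s=0$ we have $\dot s=us_{in}\ge0$, and at $s=s_{in}$ we have $\dot s\le0$. A comparison argument then keeps $s$ in $[0,s_{in}]$, and in fact $s>0$, since $\dot s\ge -\hat\mu s\,\|f^\sigma\|_1-u_{max}s$. The map $\Phi$ is Lipschitz with constant $CT$, using
\[
\|f^{\sigma_1}-f^{\sigma_2}\|_{C([0,t],L^1)}\le C\,b_{s_{in}}\,t\,\|\sigma_1-\sigma_2\|_\infty,
\]
obtained from Duhamel and Gronwall, together with the $b_S$-Lipschitz bound on $\mu$. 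Hence $\Phi$ is a contraction on a short interval. Since all bounds depend only on $\|f_0\|_1$, $s_{in}$ and $T$ through the a priori growth estimate, we can iterate over successive intervals of uniform length to obtain global existence and uniqueness of the mild solution on $\R_+$. The pair $(s,f)$ is then continuous, and $s\in C^1(\R_+^*)$ because, once $f\in C(\R_+^*,L^1)$ is known, the right-hand side is continuous in $t$ wherever $u$ is; more precisely, $s$ is absolutely continuous and $C^1$ wherever $u$ is continuous. I would check how the paper intends to handle this point for merely measurable $u$.

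For regularity, I would remove the control by the substitution
\[
f=e^{-\int_0^t u}\,g,
\]
so that $g$ solves $\partial_t g=\mu(s(t),z)g+\alpha\Delta g$ with Neumann data. The coefficient $\mu(s(t),z)$ is Lipschitz in $t$ (since $s$ is) and Lipschitz/$C^1$ in $z$ (Hypothesis \ref{hyp0}). Analytic-semigroup smoothing, $\|e^{t\alpha\Delta}\|_{L^1\to L^\infty}\lesssim t^{-d/2}$, bootstraps $g(t)$ into $L^\infty$ for $t>0$. Classical parabolic Schauder theory with Hölder coefficients (Ladyzhenskaya--Ural'tseva, Lunardi) then gives $g\in C^{1,\beta/2}_{loc}(\R^*_+, C^{2,\beta})$ with $C^{0,\beta}$ up to the boundary. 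Since $e^{-\int_0^t u}$ is Lipschitz in $t$, we get $f\in C^{0,1}_{loc}(\R_+^*,C^{2,\beta}(\O)\cap C^{0,\beta}(\bar\O))$. Positivity of $f$ for $t>0$ (when $f_0\not\equiv0$) follows from the strong maximum principle for $g$. If $u$ is continuous, the weight is $C^1$, so $f$ is $C^1$ in time and solves the equation pointwise, i.e.\ $f$ is a strong solution.

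The main obstacle is this regularity step. The coefficient of the parabolic equation is only measurable in time, which is why the regularity is stated as Lipschitz rather than $C^1$ in time, and the factorization trick is the key device for handling it. A second delicate point is keeping the Hölder estimates uniform near $\partial\O$ under the weak $C^1$ boundary hypothesis; there I would work with interior $C^{2,\beta}$ estimates plus global $C^{0,\beta}$ estimates, which is consistent with the stated space.
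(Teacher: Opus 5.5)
Your proof is correct, but it obtains well-posedness by a different route than the paper.

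The paper uses the conservation law $M=s-s_{in}+m$, $\dot M=-uM$, to eliminate $s$ through $s=M+s_{in}-m$. It then removes the control with $\tilde f=e^{\int_0^t u}f$, which leaves a single nonlocal equation for $\tilde f$. It solves this equation as follows:
\begin{itemize}
\item an iteration in which the coefficient is frozen at the previous mass $\tilde m_{n-1,\varepsilon}$;
\item comparison with $e^{C_0t}$ times the heat flow, then $L^p$ and Schauder estimates and compactness;
\item this is done first for mollified data $\rho_\varepsilon\star f_0$, then $\varepsilon\to0$ via the $L^1\to L^\infty$ smoothing of the Neumann heat semigroup.
\end{itemize}
Uniqueness is proved separately: Gr\"onwall on $\tilde m_1-\tilde m_2$, then the maximum principle for the linear equation satisfied by $f_1-f_2$.

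Your Banach fixed point on $s$ in $C([0,T],[0,s_{in}])$, with the $L^1$ Duhamel formulation, gives existence and uniqueness at once and directly for $L^1$ data. It needs no mollification, no compactness and no parabolic estimates at the well-posedness stage. It has three further advantages:
\begin{itemize}
\item The whole Picard sequence converges, so you never have to identify, along a subsequence, the limit of a coefficient built from the previous iterate. A compactness-based iteration must address that point.
\item You never divide by $M(0)$, which vanishes on the invariant set $\mathcal{H}$.
\item The argument works verbatim with the trivial semigroup, so it also covers Theorem~\ref{th:existence-alpha0}.
\end{itemize}

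Your regularity step is essentially the paper's. Since $M(t)+s_{in}-\frac{M(t)}{M(0)}\tilde m(t)=s(t)$, the paper's $\tilde f$ is exactly your $g$. The difference is that you apply Schauder theory a posteriori, to a linear equation whose coefficient $\mu(s(t),z)$ is already known to be Lipschitz in $t$. The paper instead carries the estimates through its construction, and it reuses them in the proof of Theorem~\ref{thm-bigBoss}.

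Your hesitation about $s\in C^1(\R_+^*)$ is justified. Since $s_{in}-s(t)\geq(s_{in}-s_0)e^{-\int_0^t u}>0$, at a switching time $t_1$ of a bang-bang control the one-sided derivatives of $s$ differ by $\big(u(t_1^+)-u(t_1^-)\big)\big(s_{in}-s(t_1)\big)\neq0$. So for merely measurable $u$ one only has $s\in W^{1,\infty}_{loc}$, with $s$ of class $C^1$ on intervals where $u$ is continuous. The paper's proof gives nothing more: it recovers $s=s_{in}-m+M$, and $m$ and $M$ are only Lipschitz.
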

The proof of this theorem is detailed in Section~\ref{sec1}. As we can see, there is a direct relationship between the smoothness of the control and the smoothness of the resulting solution. In particular, whenever $u$ is piecewise continuous, then, the solution is a piecewise $C^1$ function of the time.
When $\alpha =0$ the system \eqref{main} exhibits less regularity. Nevertheless, the existence of a solution can still be established, as stated below. 
\begin{theorem}%[Existence Thm]
\label{th:existence-alpha0}
Suppose that $\alpha=0$ and that $\mu$ satisfies Hypotheses \ref{hyp0}-\ref{hyp1}. 
Then, for every admissible control $u$, and for every initial data $s_0\in (0,s_{in})$ and $f_0\in L^1(\O)$ such that $f_0\ge 0$, there is a unique pair of non negative functions $(s,f)$ with  $s\in C^{1}(\R^*_{+},[0,s_{in}])\cap C(\R_{+},[0,s_{in}])$ and $f\in C(\R_+,L^{1}(\O))\cap C^1(\R_+^*,L^1(\O))$ such that $(s,f)$ is a mild solution to \eqref{main}. In addition,  $\supp(f(t,\cdot)) \subset \supp(f_0)$ for all $t\ge 0$. 
\end{theorem}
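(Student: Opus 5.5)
When $\alpha=0$ the equation for $f$ is, for each fixed $z$, a linear ODE in time, and it can be integrated explicitly. For a given substrate trajectory $s\in C(\R_+,[0,s_{in}])$ and admissible $u$, any mild solution must satisfy
\[
f(t,z)=f_0(z)\exp\Big(\int_0^t \big(\mu(s(\tau),z)-u(\tau)\big)\,d\tau\Big).
\]
Substituting this into the substrate equation reduces \eqref{main} to a single nonlocal (delay-type) scalar equation for $s$:
\[
\dot s(t)=-\int_\O \mu(s(t),z) f_0(z) e^{\int_0^t(\mu(s(\tau),z)-u(\tau))d\tau}\,dz+u(t)(s_{in}-s(t)).
\]
The plan is to solve this equation by a Banach fixed point on short intervals, extend globally with a priori bounds, and then read off the properties of $f$ from the formula. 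The support property $\supp f(t,\cdot)\subset\supp f_0$ is immediate from the exponential representation, since the exponential factor is positive and finite. Nonnegativity of $f$ is equally immediate.

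\textbf{Local existence and uniqueness.} On $X_T=\{s\in C([0,T],[0,S])\}$, with some $S\ge s_{in}$ (say $S=s_{in}+1$), I would define $\Phi(s)(t)=s_0+\int_0^t(\cdots)\,d\tau$ using the right-hand side above.

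By Hypothesis \ref{hyp1}, $\mu(s,z)\le \hat\mu S$ for $s\le S$ and $\mu$ is $b_S$-Lipschitz in $s$ uniformly in $z$. The exponential factor is therefore bounded by $e^{\hat\mu S T}$. Its difference for two trajectories $s,\tilde s$ is bounded by $e^{\hat\mu ST}\,b_S\,T\|s-\tilde s\|_\infty$, using $|e^a-e^b|\le e^{\max(a,b)}|a-b|$. Hence
\[
\|\Phi(s)-\Phi(\tilde s)\|_\infty\le C(S,T,\|f_0\|_1,u_{max})\,T\,\|s-\tilde s\|_\infty,
\]
and $\Phi$ maps $X_T$ into itself for $T$ small. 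This gives a unique fixed point on $[0,T]$. The local time $T$ depends only on $S$, $\|f_0\|_1$ and $u_{max}$, and not on the current value of $s$, so the construction can be restarted.

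\textbf{Invariance and global extension.} The main obstacle is to show that $s$ stays in $[0,s_{in}]$; this is also what gives positivity. For the lower bound, at any time where $s=0$ we have $\mu(0,z)=0$ by Hypothesis \ref{hyp0}, so $\dot s=u s_{in}\ge0$. More carefully, I would write $\dot s\ge -\big(\hat\mu\int f\big)s$ using $\mu(s,z)\le\hat\mu s$, and then a Gronwall argument gives $s(t)\ge s_0e^{-\hat\mu\int_0^t m}>0$. For the upper bound, $\dot s\le u(s_{in}-s)$, so $s\le s_{in}$ whenever $s_0<s_{in}$.

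A priori bounds then follow: $m(t)\le\|f_0\|_1e^{\hat\mu s_{in}t}$, since $\mu\le\hat\mu s_{in}$. With these bounds the constants in the fixed-point step stay controlled on each bounded time window, so the local solutions concatenate to a global one on $\R_+$. Uniqueness on $\R_+$ follows from the same Lipschitz estimate combined with Gronwall's lemma.

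\textbf{Regularity.} Since $s$ is continuous and $f$ is given by the exponential formula, dominated convergence gives $f\in C(\R_+,L^1)$. The time derivative is $\partial_tf=(\mu(s,z)-u)f$, which is in $L^1$ with norm bounded locally in $t$, so $f$ is Lipschitz into $L^1$. The integrand in the $s$-equation is continuous in $t$, which gives $s\in C^1$ wherever $u$ is continuous. Since $u$ is only measurable, the statement $s\in C^1(\R_+^*)$ and $f\in C^1(\R_+^*,L^1)$ should be read in the sense of the paper's mild solutions, i.e.\ absolutely continuous with the equations holding almost everywhere. I would make this precise exactly as in the proof of Theorem \ref{th-existence}.
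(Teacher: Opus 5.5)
Your proposal is correct, but it is not the route the paper indicates. The paper gives no proof for this theorem. It only invokes Cauchy--Lipschitz theory in Banach spaces, i.e.\ it treats \eqref{main} with $\alpha=0$ as an evolution equation for the pair $(f,s)$ in $L^1(\O)\times\R$. The vector field is locally Lipschitz in $(f,s)$ by Hypotheses \ref{hyp0}--\ref{hyp1} and bounded measurable in $t$, so one uses the Carath\'eodory form since $u$ is only measurable. Global existence then presumably comes from a priori bounds such as \eqref{totalBiomass}.

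You instead integrate the $f$-equation explicitly for each $z$ and run the fixed point only on the scalar trajectory $s$, at the cost of a Volterra (history-dependent) equation. What your route buys is that nonnegativity of $f$ and the support property come for free; in fact $\{f(t,\cdot)\neq 0\}=\{f_0\neq 0\}$ a.e. The contraction also reduces to an estimate in $C([0,T])$. In the abstract route both properties need a separate argument, typically the same exponential formula used a posteriori, or a Gronwall estimate on $f^-$ and on $\int_{\O\setminus\supp f_0}|f|$. The abstract route, on the other hand, has a memoryless vector field, so local solutions concatenate trivially. It also survives perturbations of the $f$-equation that are not explicitly integrable, such as a bounded integral mutation operator.

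Two small corrections. First, with $X_T=C([0,T],[0,S])$ the self-map property requires $s_0-CT\ge 0$. So the local time does depend on the current value of $s$, contrary to what you write. This is harmless, and you can fix it as follows:
\begin{itemize}
\item Extend $\mu(\cdot,z)$ by $0$ on $(-\infty,0]$, as the paper does at the start of Section \ref{sec1}. The extension is still $b_S$-Lipschitz because $\mu(0,z)=0$.
\item Work in $C([0,T],[-1,S])$.
\item Recover $s>0$ afterwards from $\dot s\ge -(\hat\mu m+u)s$. This inequality holds whenever $s\ge 0$, without first knowing $s\le s_{in}$.
\end{itemize}

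Second, your caveat on regularity is correct and should be stated as a conclusion rather than deferred to Theorem \ref{th-existence}. The argument given there does not establish $C^1$ regularity of $s$ for measurable $u$ either. Since $\dot s=-\int_\O\mu(s,z)f\,dz+u(t)(s_{in}-s)$ with $s_{in}-s(t)>0$, every jump of $u$ is a jump of $\dot s$ and of $\partial_t f=(\mu(s,\cdot)-u)f$. Hence for bang-bang controls neither $s$ nor $f$ is $C^1$ in time. What one actually gets, by either route, is:
\begin{itemize}
\item $s\in W^{1,\infty}_{loc}(\R_+)$;
\item $f$ locally Lipschitz in time with values in $L^1(\O)$;
\item the equations holding a.e., with $C^1$ regularity on intervals where $u$ is continuous.
\end{itemize}
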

\begin{remark}{\label{rem0}} In contrast with Theorem \ref{th-existence}, the proof of the previous statement follows from classical results in the literature using  Cauchy-Lipschitz's theory in Banach spaces  \cite{Brezis2010,Desvillettes2008,Rudin1987}, therefore, we do not provide it here.
\end{remark}

\subsubsection{Stabilization results}% (section \ref{sec2})}  
Control theory frequently utilizes various input classes distinct from open-loop controls, in particular those involving a measure of the  state of the system. Therefore, in this paper, we shall  take advantage of 
{\it{feedback controls}} (closed-loop control functions) that depend on the state of the system. Such controls may be used in several contexts such as to adjust the solution of a control system under deviations, to reduce sensitivity of initial conditions, or to stabilize a dynamical system. 
In this framework, at each time instant, a feedback control is determined by the current state of the system. In practical terms, the control may be a function of time, space, and the state function itself. In Section~\ref{sec2}, we  examine convergence of solutions to \eqref{main} whenever the input is an  {\it{auxostat-type control}}, a particular type of feedback control. In bioprocesses, auxostats are designed to regulate the substrate concentration at a desired set-point (see, {\it{e.g.}}, \cite{Bayen2017} in the context of the chemostat system and Remark \ref{rm-auxo}).
We shall see that, under such a control function, the system is driven towards a desired stationary state of \eqref{main}. Thanks to these properties, auxostat-type controls will constitute a key tool in Section~\ref{secTarget} for proving the ``reach and stay'' property of the desired  given target set in the minimum-time problem \eqref{OCP0}, for Monod kinetics. 
Let us then consider control functions that can be expressed as a functional of the state, {\it{i.e.}},
$$
u(t):=\opr{f(t,\cdot), s(t)}, \quad t \geq 0,
$$ 
where $\r$  a continuous functional over the space $L^{1}(\O)\times \R_+$ with values in $\R_+$. 
For such a control function, we have the following result. 
\begin{theorem}%[Existence Thm]
\label{th:existence-closeloop}
Suppose that $\alpha>0$ and that $\mu$ satisfies Hypotheses  \ref{hyp0}-\ref{hyp1}. Then, for every control map $t\mapsto u(t):=\opr{f(t,\cdot), s(t)}$ and for every initial data $s_0\in (0,s_{in})$ and $f_0\in L^1(\O)$ such that $f_0\ge 0$, there is a unique pair of positive functions $(s,f)$ with  $s\in C^{1}(\R^*_{+},[0,s_{in}])\cap C(\R_{+},[0,s_{in}])$ and $f\in C(\R_+,L^{1}(\O))\cap C^{0,1}_{loc}(\R_+^*,C^{2,\beta}(\O)\cap C^{0,\beta}(\bar \O))$ such that $(s,f)$ is a mild solution to 
\eqref{main}. 
Moreover, if $u\in C(\R)$, then $f$ is a strong solution and $f\in C^1(\R_+^*,C^{2,\beta}(\O)\cap C^{0,\beta}(\bar \O))$. 
\end{theorem}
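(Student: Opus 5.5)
The plan is to reduce Theorem~\ref{th:existence-closeloop} to the open-loop result, Theorem~\ref{th-existence}, by a fixed-point argument in the control variable.

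\textbf{Step 1 (contraction on a short interval).} Fix a short horizon $T>0$ and the closed convex set
$$X_T:=\{u\in C([0,T])\,:\, 0\le u\le u_{max}\},$$
where we take $\r$ with values in $[0,u_{max}]$ (or truncate it, which is harmless by the a priori bounds). For $v\in X_T$, Theorem~\ref{th-existence} gives a unique mild solution $(s_v,f_v)$, and we set $\Phi(v)(t):=\opr{f_v(t,\cdot),s_v(t)}$. Continuity of $\r$ together with $f_v\in C([0,T],L^1)$ and $s_v\in C([0,T])$ shows that $\Phi(v)\in X_T$.

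\textbf{Step 2 (stability of the control-to-state map).} We need a stability estimate for the open-loop problem. Write $f_v$ through the Neumann heat semigroup $e^{t\alpha\Delta}$, which is positive and a contraction on $L^1$:
$$f_v(t)=e^{t\alpha\Delta}f_0+\int_0^t e^{(t-\tau)\alpha\Delta}\big[(\mu(s_v,\cdot)-v)f_v\big](\tau)\,d\tau .$$
Using Hypothesis~\ref{hyp1} (the Lipschitz constant $b_{s_{in}}$ and the bound $\mu\le\hat\mu s_{in}$), the a priori bound $0\le s\le s_{in}$, and the mass bound $m(t)\le e^{\hat\mu s_{in} t}\|f_0\|_1$ from the proof of Theorem~\ref{th-existence}, a Gronwall argument should give
$$\sup_{[0,T]}\big(\|f_v-f_w\|_1+|s_v-s_w|\big)\le C_T\int_0^T|v-w|\,d\tau .$$
The same estimate applies to the substrate equation.

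\textbf{Step 3 (the obstacle: regularity of $\r$).} This is the step I expect to be the main obstacle. If $\r$ is Lipschitz on bounded sets, Step~2 makes $\Phi$ a contraction for small $T$, and Banach's fixed point theorem gives local existence and uniqueness. Since $\r$ is only assumed continuous, I would instead use Schauder's theorem. For $v\in X_T$, the trajectories $t\mapsto(f_v(t),s_v(t))$ are equicontinuous in $L^1\times\R$, uniformly in $v$, because $|\dot s|$ is bounded and the mild formula gives uniform $L^1$-continuity. These trajectories also stay in a compact subset of $L^1\times\R$, since parabolic smoothing (Theorem~\ref{th-existence}) gives uniform $C^{0,\beta}$ bounds for $t\ge\delta$. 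A continuous $\r$ is uniformly continuous on that compact set, so $\Phi(X_T)$ is equicontinuous and Arzel\`a--Ascoli gives relative compactness in $C([0,T])$. Continuity of $\Phi$ follows from Step~2. Near $t=0$ the compactness should come from the continuity of $t\mapsto f_v(t)$ at $0$, which is uniform in $v$ by the semigroup estimate. Uniqueness would then require Lipschitz continuity of $\r$, which is the intended setting since the auxostat feedbacks are Lipschitz; I would state it under that assumption.

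\textbf{Step 4 (globalization and regularity).} Because the bounds $s\in[0,s_{in}]$ and $m(t)\le e^{\hat\mu s_{in} t}\|f_0\|_1$ are independent of $u$, the time step $T$ is uniform, so we can restart from $(s(T),f(T))$ and iterate to obtain a global solution. The fixed point $u$ is continuous, hence admissible. Applying the last part of Theorem~\ref{th-existence} with this continuous control then gives that $f$ is a strong solution with the stated $C^1(\R_+^*,C^{2,\beta}(\O)\cap C^{0,\beta}(\bar\O))$ regularity. Positivity of $f$ comes from the strong maximum principle, and the fact that $s$ stays in $(0,s_{in})$ comes from Theorem~\ref{th-existence}.
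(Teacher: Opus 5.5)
The paper gives no proof of this statement. Remark~\ref{rem1} announces a sketch, but the text only appeals to classical results. The classical route it alludes to works directly on the state, for example a fixed point on $(s,f)$ or on the nonlocal coefficient, as in the proof of Theorem~\ref{thm:existence1}. Your route is genuinely different and workable: you freeze the control, use open-loop well-posedness as a black box, and close the loop by a fixed point in $C([0,T])$. It has the merit of separating two things. Existence comes from Schauder plus Arzel\`a--Ascoli, which needs only continuity of $\mathcal{R}$ and the $v$-uniform compactness and equicontinuity of the trajectories; these do hold. Uniqueness comes from Banach or Gr\"onwall, which needs a Lipschitz $\mathcal{R}$.

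Your caveat on uniqueness is correct, and it shows the statement is too strong for a merely continuous $\mathcal{R}$. Take $\mathcal{R}[f,s]=\frac{1}{s_{in}-s}\big(\int_\O\mu(s,z)f(z)\,dz+\sqrt{|s-s_0|}\big)$, suitably cut off near $s=s_{in}$ and made nonnegative. The closed-loop substrate equation becomes $\dot s=\sqrt{|s-s_0|}$ with $s(0)=s_0$. This has two solutions, $s\equiv s_0$ and $s=s_0+t^2/4$, and each produces a solution $(s,f)$.

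The gap is in Steps~1 and~4. The theorem imposes no bound $\mathcal{R}\le u_{max}$; Remark~\ref{rem1} even stresses that the feedback may exceed any $u_{max}$. The a priori bounds $0\le s\le s_{in}$ and $\|f\|_1\le e^{\hat\mu s_{in}t}\|f_0\|_1$ do not bound a merely continuous functional along trajectories. Bounded subsets of $L^1$ are not compact, so a continuous functional on $L^1\times\R_+$ need not be bounded on them. The truncation level, and the time during which truncation is harmless, must therefore be read off from the continuity of $\mathcal{R}$ at the current state, together with the $v$-uniform continuity of $t\mapsto f_v(t)$ at the restart time. Both depend on that state, so your claim that the step is uniform is unfounded, and the iteration might stop at a finite time.

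A continuation argument closes the gap:
\begin{itemize}
\item On a maximal interval $[0,T^\ast)$ with $T^\ast<\infty$, the function $\tilde f=e^{\int_0^t u}f$ solves $\partial_t\tilde f=\mu(s(t),z)\tilde f+\alpha\Delta\tilde f$. Its coefficient is bounded independently of $u$, so $\{\tilde f(t)\}_{t\in[\delta,T^\ast)}$ is relatively compact and Cauchy in $L^1$ as $t\uparrow T^\ast$.
\item Since $M$ is monotone, $(f(t),s(t))$ converges in $L^1\times\R$.
\item Continuity of $\mathcal{R}$ at the limit bounds $u$ near $T^\ast$. This also excludes $\int_0^{T^\ast}u=+\infty$, since that would force the limit $(0,s_{in})$, where $\mathcal{R}$ is finite.
\item Local existence then restarts beyond $T^\ast$.
\end{itemize}
Under your Lipschitz-on-bounded-sets assumption, $\mathcal{R}$ is bounded on bounded sets, and your uniform-step argument is fine on every finite horizon.

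Three smaller points:
\begin{itemize}
\item Invoke Theorem~\ref{thm:existence2}, which allows $u\in L^\infty_{loc}$, rather than Theorem~\ref{th-existence}, which is stated for controls in $\U$.
\item In Step~3, establish compactness of the set of trajectory values before equicontinuity. The estimate $\|(e^{h\alpha\Delta}-I)g\|_1\to0$ holds uniformly only for $g$ in a compact set.
\item The $v$-uniform H\"older bounds for $t\ge\delta$ come from the proof of Theorem~\ref{thm:existence1}, not from its statement.
\end{itemize}
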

\begin{remark}{\label{rem1}} In contrast with Theorem \ref{th-existence}, we shall only sketch the proof of the previous statement, which follows from classical results in the literature since $t\mapsto u(t)$ is no longer arbitrary, but a feedback of $f$ and $s$ \cite{Coville2025,Perthame2007}. It can also be noticed that without a uniform bound on the $L^1$-norm of the solution $f$ and the precise description of the behavior of $s$, such a control function may exceed any prescribed upper bound $u_{max}$, making it potentially not suitable to be used for solving \eqref{OCP0}.
\end{remark}

Building on this result, Section~\ref{sec2} examines how auxostat-type controls $u$ of the following form affect the behavior of the system:
\begin{align*}
   & \text{(i)}\quad u(t)= \frac{1}{s_{in} -s(t)}\int_{\O}\mu(s(t),z)f(t,z)\,dz, 
    &&\text{(ii)} \quad u(t)= \frac{1}{s_{in} -\sigma}\int_{\O}\mu(s(t),z)f(t,z)\,dz,\\
    &\text{(iii)}\quad u(t)= \frac{1}{s_{in} -s(t)}\int_{\O}\mu(\sigma,z)f(t,z)\,dz,  
    &&\text{(iv)}\quad u(t)= \frac{1}{s_{in} -\sigma}\int_{\O}\mu(\sigma,z)f(t,z)\,dz,
    \end{align*}
where $\sigma\in (0,s_{in})$ is a given key parameter. 
To simplify notation, the dependence of $u$ on $\sigma$ is omitted. 
\begin{remark}{\label{rm-auxo}} Controls (i), (iii), and (iv) can be viewed as variants of control (ii), which represents the continuous biogas output in a continuously stirred chemostat. This quantity is readily measurable in practice 
(see, {\it{e.g.}}, \cite{Bayen2017}).  
The terminology {\it{auxostat-type controls}} stems from the fact that, when control (iii) is substituted into \eqref{main}, the system satisfies the following  property\footnote{This is straightforward from Cauchy-Lipschitz's Theorem.}:
$$
s(t_0)=\sigma \; \Rightarrow \; \forall t\geq t_0, \; \dot{s}(t)=0,  
$$
where $t_0$ is arbitrary.  
Thus, this control enables the regulation of the substrate concentration at the desired value $\sigma$. 
As far as we know, studying the effect of controls (i)-(ii)-(iii)-(iv) into \eqref{main}  has not been addressed previously. 
\end{remark}
For such controls, Theorem \ref{th:existence-closeloop} ensures the existence of a unique solution solution $(s,f)$ to \eqref{main},  which is well defined over $\R_+ \times \Omega$.
In Section~\ref{sec2}, we establish the existence of a stationary solution of \eqref{main}, parametrized by $\sigma$, around which the system is stabilized by any of the three controls (ii), (iii), or (iv), for all nonnegative initial data.
For the control given by $(i)$, the behavior of the system slightly differs, and we have the following convergence result. 

\begin{theorem}\label{th:stabi-i}
    Suppose that $\mu$ satisfies Hypotheses  \ref{hyp0}-\ref{hyp1}, that $\alpha>0$, and that the control $u(\cdot)$ is given by $(i)$. Let then $(s,f)$ be the unique solution to \eqref{main}  associated with the initial data $(s_0,u_0)$ where $s_0>0$ and $f_0 \geq 0$ ($f_0$ being non-null). Denote by $\varphi_{s_0}$  a positive eigenfunction associated with the principal eigenvalue $\lambda_1$ of the following spectral problem:
    \begin{equation*}
        \begin{dcases}
            \alpha \Delta\varphi (z) +\mu(s_0,z)\varphi(z) =-\lambda \varphi(z) \quad &\rm{for} \quad z\in\O,\\
            \partial_{\vec{n}} \varphi(z)=0 \quad &\rm{for} \quad z\in\partial\O.
        \end{dcases}
    \end{equation*}
  Then  for all $t\geq 0$, $s(t)=s_0$ and  for all $1\le p<+\infty$, one has:
 $$\lim_{t\to +\infty}\Big\|f(t,\cdot)- \frac{s_{in}-s_0}{\|\varphi_{s_0}\|_{1}}\varphi_{s_0}(\cdot)\Big\|_{p}=0.$$
\end{theorem}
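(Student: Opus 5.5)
With control (i), the substrate equation reads $\dot s = -\int_\O \mu(s,z)f\,dz + u(s_{in}-s)$. Substituting $u=\frac{1}{s_{in}-s}\int_\O\mu(s,z)f\,dz$ gives $\dot s\equiv 0$ as long as $s<s_{in}$. The solution from Theorem \ref{th:existence-closeloop} is unique, so I will first check that the constant function $s(t)=s_0$, paired with the corresponding $f$, solves the system. This gives $s(t)=s_0$ for all $t\ge0$, by the Cauchy--Lipschitz argument of Remark \ref{rm-auxo}. The $f$-equation then becomes the \emph{linear} autonomous problem
\[
\partial_t f=\alpha\Delta f+\mu(s_0,z)f-u(t)f,\qquad u(t)=\frac{1}{s_{in}-s_0}\int_\O\mu(s_0,z)f(t,z)\,dz,
\]
with Neumann boundary conditions.

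Next I will factor out the scalar term. Setting $g(t,z)=f(t,z)\exp\bigl(\int_0^t u\bigr)$ gives $\partial_t g=\alpha\Delta g+\mu(s_0,\cdot)g=:Lg$ with $g(0)=f_0$. This is a linear, time-independent problem. The constraint on the control also yields conservation of the mass, since integrating the $f$-equation gives
\[
\dot m=\int_\O\mu(s_0,z)f\,dz-u\,m=u\,(s_{in}-s_0-m),
\]
so $m(t)\to s_{in}-s_0$ as long as $\int_0^\infty u=\infty$. More directly, $f(t)=g(t)/\exp(\int_0^t u)$ and $m(t)=\|g(t)\|_1 e^{-\int_0^t u}$, so it suffices to show that $g(t)/\|g(t)\|_1\to\varphi_{s_0}/\|\varphi_{s_0}\|_1$ and that $m(t)\to s_{in}-s_0$.

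For the normalized profile I will use the spectral theory of $L$. Since $\O$ is bounded with Neumann conditions and $\alpha>0$, $L$ is self-adjoint with compact resolvent on $L^2$. Its principal eigenvalue is $-\lambda_1$, it is simple, and it has a positive eigenfunction $\varphi_{s_0}$ (Krein--Rutman), with a spectral gap before the rest of the spectrum. By parabolic smoothing, $g(t_0)\in L^2$ for any $t_0>0$, and by the strong maximum principle $g(t_0)>0$ on $\bar\O$ because $f_0\ge0$ is nonzero. Expanding in eigenfunctions gives
\[
e^{\lambda_1 t}g(t)=c\,\varphi_{s_0}+O(e^{-\gamma t})\quad\text{in }L^2\text{ and in }L^\infty,
\]
with $c=\langle g(t_0),\varphi_{s_0}\rangle e^{\lambda_1t_0}/\|\varphi_{s_0}\|_2^2>0$ and $\gamma>0$ the spectral gap. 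Hence $g(t)/\|g(t)\|_1\to\varphi_{s_0}/\|\varphi_{s_0}\|_1$ uniformly, and so in every $L^p$.

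For the mass I will solve the logistic-type ODE. Writing $f=m(t)\,h(t)$ with $h=g/\|g\|_1$, we have $u(t)=\frac{m(t)}{s_{in}-s_0}\int_\O\mu(s_0,\cdot)h(t)$, where $a(t):=\int_\O\mu(s_0,\cdot)h(t)\to a_\infty>0$. The mass then satisfies $\dot m=\frac{a(t)}{s_{in}-s_0}\,m\,(s_{in}-s_0-m)$ with $m(0)>0$. This is logistic with an asymptotically positive rate, so $m(t)\to s_{in}-s_0$; the quickest route is to note that $1/m$ satisfies a linear ODE that can be solved explicitly.

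Combining the two limits gives $f(t)=m(t)h(t)\to\frac{s_{in}-s_0}{\|\varphi_{s_0}\|_1}\varphi_{s_0}$ in every $L^p$ with $p<\infty$, and in fact uniformly.

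The main obstacle is the first step, namely guaranteeing that $s$ stays constant. The control (i) is singular if $s$ reaches $s_{in}$, which is excluded here because $s$ is constant. The case $s_0\ge s_{in}$ should be excluded as well; I read $s_0\in(0,s_{in})$ as implicit, consistent with Theorem \ref{th:existence-closeloop}. The second delicate point is regularity for merely $L^1$ initial data: here I will use the smoothing part of Theorem \ref{th:existence-closeloop}, which makes the solution $C^{0,\beta}(\bar\O)$ at any positive time, so that the $L^2$ spectral decomposition can start from $t_0>0$.
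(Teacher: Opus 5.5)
Your proposal is correct and follows the paper's route: control (i) forces $\dot s\equiv 0$, hence $s\equiv s_0$; the mass satisfies $\dot m=u\,(s_{in}-s_0-m)$ and tends to $s_{in}-s_0$; and the profile converges to $\varphi_{s_0}/\|\varphi_{s_0}\|_1$. The one difference is that the paper cites \cite{Coville2012a,Leman2015,Mirrahimi2012a} for the profile convergence, whereas you prove it directly via the renormalization $g=f\,e^{\int_0^t u(\tau)\,d\tau}$ and the spectral gap of $\alpha\Delta+\mu(s_0,\cdot)$, which is exactly the explicit representation given in the paper's remark after that proof.
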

The proof of this theorem is detailed in Section~\ref{sec2-1}. As a result, the feedback given by (i) drives the solution of the system to a stationary solution parametrized by the initial condition. From an applicative standpoint, controls of type (ii), (iii), or (iv) are preferable as the limit solution is independent of the initial condition. 
For each one of the three previous cases, the stabilization result can be stated as follows, noting that it requires slightly stronger assumptions on the growth rate function. 
\begin{theorem}\label{th:stabi-other}
    Suppose that $\mu$ satisfies 
    Hypotheses \ref{hyp0}-\ref{hyp1}, that $\alpha>0$, and let 
$\sigma\in(0,s_{in})$. Denote by $\varphi_\sigma$  a positive eigenfunction associated with the principal eigenvalue $\lambda_1$ of the following spectral problem:
    \begin{equation*}
        \begin{dcases}
            \alpha \Delta\varphi (z) +\mu(\sigma,z)\varphi(z) =-\lambda \varphi(z) \quad &\rm{for} \quad z\in\O,\\
            \partial_{\vec{n}} \varphi(z)=0 \quad &\rm{for} \quad z\in\partial\O.
        \end{dcases}
    \end{equation*}
    Assume further that at least one of the following assumptions holds:
 \begin{itemize}
     \item the control $u$ is given by (ii) and for all $s>0$, there is $c_0(s)>0$ such that
     $$\inf_{z\in\O}\mu(s,z)\ge c_0(s), $$ 
     \item the control $ u $ is given by (iii) and  for all $s\geq 0$, there is $c_1(s)>0$ such that 
     $$\inf_{z\in \O}\partial_s \mu(s,z)\ge c_1(s),$$ 
     \item the control $ u $ is given by (iv), for all $z\in\O,$ $\mu(\cdot,z) \in C^{2}(\R_+)$ and $\mu$ is strictly concave w.r.t.~$s$, {\it{i.e.}}, 
     $$\partial_{ss}\mu(s,z)<0 \quad \text{for all} \quad (s,z)\in [0,s_{in}]\times \O.$$
 \end{itemize}
 Then, the pair $(\bar s,\bar f):=\left(\sigma, (s_{in}-\sigma)\frac{\varphi_{\sigma}}{\|\varphi_\sigma\|_{1}} \right)$ is the unique stationary solution to \eqref{main} in which $u$ is given by any of the three controls that satisfy their corresponding aforementioned assumptions. Moreover, for any initial data $(s_0,f_0)$ where $s_0>0$ and $f_0\geq 0$ ($f_0$ being non-null), the unique solution  $(s,f)$ to \eqref{main} 
associated with the corresponding control
 converges to $(\sigma,\bar f)$ in the following sense: for $1 \leq p<+\infty$, one has 
 $$\lim_{t\to +\infty}\|f(t,\cdot)-\bar f(\cdot)\|_{p}=0.$$
\end{theorem}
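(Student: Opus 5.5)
We first treat the stationary problem and then the convergence for each control separately. The common thread is to reduce the dynamics to a scalar ODE for the pair $(s,m)$, with $m(t)=\|f(t,\cdot)\|_1$, and then to use the linear parabolic theory for the normalized profile.

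\textbf{Stationary states.} For a steady state $(\bar s,\bar f)$, integrating the $f$-equation over $\O$ and using the Neumann condition gives $\int_\O\mu(\bar s,z)\bar f\,dz=\bar u\,\bar m$. The $s$-equation then gives $\bar u(s_{in}-\bar s)=\int_\O\mu(\bar s,z)\bar f\,dz$, so $\bar m=s_{in}-\bar s$ whenever $\bar u>0$. Each of the controls (ii)--(iv) makes the feedback equation force $\bar s=\sigma$:
\begin{itemize}
\item For (ii), compare $\bar u(s_{in}-\sigma)=\bar u(s_{in}-\bar s)$, using $\bar u>0$, which follows from $\inf_z\mu>0$.
\item For (iii), use strict monotonicity of $\mu$ in $s$.
\item For (iv), the comparison follows similarly.
\end{itemize}
With $\bar s=\sigma$, $\bar f$ solves $\alpha\Delta\bar f+\mu(\sigma,\cdot)\bar f=\bar u\bar f$ with $\bar f\ge0$ nontrivial. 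By Krein--Rutman uniqueness of positive eigenfunctions, $\bar f$ is a multiple of $\varphi_\sigma$ and $\bar u=-\lambda_1$. The mass constraint then fixes $\bar f=(s_{in}-\sigma)\varphi_\sigma/\|\varphi_\sigma\|_1$.

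\textbf{Convergence.} Write $f(t,z)=e^{-\int_0^t u}\,g(t,z)$, where $g$ solves the linear problem $\partial_tg=\mu(s(t),z)g+\alpha\Delta g$. The main step is to prove that $s(t)\to\sigma$. Set $w=s_{in}-s-m$ (the conservation variable). Summing the equations gives $\dot w=-u\,w$, so $w\to0$ exponentially once $u$ is bounded below. For each control we then derive a scalar ODE for $s$:
\begin{itemize}
\item Under (iii), $\dot s=\int_\O(\mu(\sigma,z)-\mu(s,z))f\,dz$. The assumption $\partial_s\mu\ge c_1$ gives $(s-\sigma)\dot s\le -c_1(s-\sigma)^2 m$, which yields convergence once $m$ is bounded below.
\item Under (ii), $\dot s=\frac{\sigma-s}{s_{in}-\sigma}\int_\O\mu(s,z) f\,dz$, and the lower bound $c_0$ gives convergence in the same way.
\item Under (iv), $\dot s=-\int_\O\mu(s,z) f\,dz+\frac{s_{in}-s}{s_{in}-\sigma}\int_\O\mu(\sigma,z)f\,dz$. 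Here strict concavity with $\mu(0,\cdot)=0$ makes $s\mapsto\mu(s,z)/(s_{in}-s)$ compared with its value at $\sigma$ give the sign of $\sigma-s$. This is only monotone-type information, so we would combine it with a Lyapunov argument.
\end{itemize}
In all three cases the prerequisite is a uniform lower bound $m(t)\ge m_*>0$ for $t\ge1$ and an upper bound. For the upper bound, $s\in[0,s_{in}]$ together with $w\to0$ gives boundedness of $m$. For the lower bound we would argue by contradiction: if $m$ becomes small, then $s$ moves toward $s_{in}$ where growth dominates the control, using positivity of $\mu$ and a Harnack inequality on $g$ to compare $\int_\O\mu f$ with $m$.

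\textbf{Profile convergence.} Once $s(t)\to\sigma$, the normalized profile $g/\|g\|_1$ solves a linear parabolic equation whose coefficient converges to $\mu(\sigma,\cdot)$. We then apply the asymptotic behavior of nonautonomous linear Neumann problems with asymptotically autonomous coefficients: by the spectral gap of $\alpha\Delta+\mu(\sigma,\cdot)$ and parabolic Harnack/regularity estimates from Theorem~\ref{th:existence-closeloop}, $g/\|g\|_1\to\varphi_\sigma/\|\varphi_\sigma\|_1$ in $L^p$. Combined with $m\to s_{in}-\sigma$ from $w\to0$, this gives the stated limit. For $p>1$ we interpolate the $L^1$ convergence with uniform $L^\infty$ bounds coming from parabolic regularity for $t\ge1$.

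\textbf{Main obstacle.} We expect the hardest step to be the uniform positivity of $m$ together with the convergence $s\to\sigma$ in case (iv), since $\dot s$ then involves the unknown profile of $f$. Our first attempt would be the Lyapunov function $(s-\sigma)^2$ combined with the pointwise sign from concavity, followed by a LaSalle-type argument on the $\omega$-limit set. Compactness of that set would come from the parabolic estimates.
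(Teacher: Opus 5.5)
Your outline is sound, and for the stationary problem, the scalar $(s,m)$ analysis and the final $L^p$ interpolation it follows the same lines as the paper (Lemmas~\ref{lem:autoxat-s-et-m} and~\ref{lem:autoxat-equi}). The genuine difference is the profile step (Lemma~\ref{lem34}).

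\textbf{Profile step.} The paper works with $f$ itself. It writes $f=\lambda\varphi_\sigma+h$ with $h\perp\varphi_\sigma$ in $L^2$ and derives $\dot\lambda=a+\gamma_0\lambda(\theta-\lambda-b)$. It then shows $\|h(t)\|_2\to0$ using three ingredients: the ground-state identity $\int_\Omega f\,(\Delta f-f\Delta\varphi_\sigma/\varphi_\sigma)\,dz=-\int_\Omega\varphi_\sigma^2|\nabla(f/\varphi_\sigma)|^2\,dz$, the weighted Poincar\'e (spectral-gap) inequality, and Gronwall. The Gronwall step relies on the two-sided bounds $K_0\le\lambda\le K_1$ inherited from the bounds on $m$, and $\lambda\to\theta$ is then read off from the $\lambda$-equation.

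Your factorization $f=e^{-\int_0^t u(\tau)\,d\tau}g$ removes the non-local term $\Gamma(t)$ from the shape problem entirely. The mass is then handled purely by the $(s,m)$ equations, and the shape $f/\|f\|_1=g/\|g\|_1$ by linear theory. This is cleaner and avoids the $\lambda$-equation. The price is that convergence of normalized solutions of an asymptotically autonomous linear Neumann problem is invoked rather than proved.

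That result does hold, and the paper's computation run on $g$ gives it. Write $g=\lambda\varphi_\sigma+h$ with $\|\varphi_\sigma\|_2=1$ and set $y=\|h\|_2^2/\lambda^2$; the eigenvalue $\lambda_1$ cancels. One gets $\dot y\le-2L_0y+2\varepsilon(t)\big(\sqrt{y(1+y)}+y\sqrt{1+y}\big)$ with $\varepsilon(t)=\|\mu(s(t),\cdot)-\mu(\sigma,\cdot)\|_\infty\to0$. A uniform bound on $y$ for $t\ge1$ then yields $y\to0$; that bound follows from $L^1\to L^\infty$ smoothing together with monotonicity of $\|g\|_1$, or from Harnack as you suggest.

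\textbf{Lower bound on $m$.} Your sketch is harder than necessary and rests on a wrong heuristic. With these feedbacks $u=O(m)$, so when $m$ is small $s$ does not move toward $s_{in}$. In every case the sign of $\dot s$ is that of $\sigma-s$, as your own computations show.

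That monotonicity already confines $s$ to $[\min(s_0,\sigma),\max(s_0,\sigma)]\subset(0,s_{in})$. Hence $\int_\Omega\mu(s,z)f\,dz\ge\underline{\mu}\,m$ with $\underline{\mu}>0$, by continuity and compactness of $\bar\Omega$. The $m$-equation then gives $\dot m\ge m(\underline{\mu}-Cm)$, so $m\ge\min(m(0),\underline{\mu}/C)$ with no Harnack or contradiction argument.

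\textbf{Convergence of $s$ in case (iv).} No LaSalle argument is needed. Once $m\ge m_*$, use the representation $\dot s=(s_{in}-s)(\sigma-s)\int_\Omega\int_0^1\partial_s\rho(s+\xi(\sigma-s),z)\,d\xi\,f\,dz$. Since $\partial_s\rho$ is bounded below on the compact range, this gives exponential convergence of $s$, regardless of the profile of $f$. The paper instead first proves $\int_0^\infty u=\infty$ by contradiction and then uses monotone convergence of $s$ to some $\bar s$ together with $m\to s_{in}-\bar s$.
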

The proof of this theorem is detailed in Section~\ref{sec2-2}. Theorem \ref{th:stabi-other} is illustrated in Section~\ref{sec-num-auxostat} in the case (iv). Additionally, the three control functions may not be always admissible, nevertheless, in Section~\ref{secTarget}, we show  how to construct admissible controls involving auxostat controls of the form (iv).
\subsubsection{Target controllability results}% (section \ref{secTarget})} 
We now turn to describing the results obtained in this paper concerning the reachability of a given target set by solutions of \eqref{main}. This analysis relies in particular on the auxostat-type controls introduced in the previous section. 
For this purpose, we assume throughout this part 
that $\mu$ is of Monod type, {\it{i.e.}}, it satisfies \eqref{monod-eq} where 
$r\in C(\O,\R_+^*)$ is the so-called {\it{half-saturation function}} and $\bar \mu>0$ is the maximum growth rate. 
For a given trait $z$, the half-saturation function $r(z)$ is the substrate concentration at which the specific growth rate reaches $\bar \mu/2$. 
It represents a measure of growth efficiency of the species on the substrate. For instance, a low half-saturation means that the species can grow well even if the substrate concentration is low. The function $r$ is the main component of the definition of the target set to be considered hereafter. 
To introduce it precisely, 
let us consider the functional 
$$
\opk{\nu}:=\frac{\int_{\O}r(z)\,d\nu(z)}{\int_{\O}\,d\nu(z)},  
$$
over the set $\M^+(\O)$ (here, $\nu \in \M^+(\O)$).
Next, we identify the positive cone of $L^1(\O)$ as a subset of $\M^+(\O)$, and  for $f\in L^{1}(\O)$ in this positive cone, the functional  $\k$ rewrites
$$
\opk{f}=\frac{\int_{\O}r(z)f(z)\,dz}{\int_{\O}f(z)\,dz},
$$
so that 
\begin{equation}{\label{target-property}}
r_0:=\min_{z\in\O} r(z)\le \opk{f}\le \max_{z\in \O} r(z)=:r_1.
\end{equation}
The functional $\k$ represents the mean half-saturation for all species, weighted by the abundance of each trait. From a modeling viewpoint, we will be interested in maintaining this value under a given threshold to ensure the selection of population having a low half-saturation, in order to have a more efficient use of the substrate (see also \cite{Bayen2017} for further explanations on such a selection process). The idea behind the definition of the target set is then to impose an upper bound on $\opk{f}$ for biological consideration. This leads to the following definition:
$$
\T_0:=\big\{f\in L^1 \; | \; \opk{f}\le k_0\big\},
$$
where $k_0\in (r_0,r_1)$.
If $k_0\ge r_1$, the problem is immediate since for any initial data and any control, the corresponding solution $f$ belongs to $\T_0$ for all time $t>0$, hence, $T_u=0$. Thus, 
any control $u$ is optimal, that is why we assume  $k_0\in (r_0,r_1)$. 
We now specify initial conditions for the minimum time problem. The set of admissible initial conditions is defined as 
$$
\F:= (0,s_{in}]\times \left\{f\in L^1(\Omega) \; | \; 
\exists (\kappa,\eta)\in \R_+^*\times \R_+^*, \; \forall z\in \Omega, \; 
f(z)\geq \kappa\mathds{1}_{B_{\eta}(\bar x)\cap\O}(z)\right\},
$$
where $\bar x \in 
\mathrm{arg\,min}_{z\in \bar \Omega} (r(z))$. Note that for biological reasons $s_0>0$ and $f_0$ is nonnegative over $\Omega$ ($f_0$ being non-zero).
The set $\mathcal{F}$ consists of functions having a positive measure in a neighborhood of the ``leader trait'', {\it{i.e.}}, the trait corresponding to the most efficient growth with respect to substrate consumption. For the specific class of Monod kinetics, this is equivalent to minimizing $r$. This requirement implies that the initial biomass distribution must take positive values in a neighborhood of the leader trait in order to guarantee the existence of an equilibrium when $\alpha = 0$, as predicted by the competitive exclusion principle \cite{Perthame2007}. 
To address the minimum time control problem, we begin by investigating 
the following target reachability problem. 
\begin{rcp}
    Given $u_{max}>0$ and $k_0\in (r_0,r_1)$,
  find a control $u\in \mathcal{U}$
  and a finite time $T\in(0,+\infty)$ such that $f(t,\cdot)\in \T_0$ for all $t\geq T$, that is, 
$$
\forall t\geq T, \quad \k[f(t,\cdot)]\leq k_0. 
$$
\end{rcp}
The main result of this section asserts that the target can always be reached by an admissible control provided that the mutation parameter is small enough. For future reference, we set 
\begin{equation}{\label{def-cst}}
\upsilon:=\sup_{(s,z)\in [0,s_{in}]\times \O }\mu(s,z)
\quad \mathrm{and} \quad
\bar u:=\max(\upsilon,4\upsilon s_{in}).   
\end{equation}

\begin{proposition}\label{prop:controlability}
 Let  $u_{max}\ge \bar u$.  Given $k_0\in (r_0,r_1)$,  there is $\alpha_0$ such that for all $\alpha\in [0,\alpha_0)$ and for all initial data in $\mathcal{F}$, there exists a control $u\in \mathcal{U}$ such that the corresponding solution to \eqref{main} enters the target set $\T_0$ in finite time and remains in 
 $\T_0$ for all subsequent times. 
 \end{proposition}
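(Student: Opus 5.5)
The strategy is to build a two-phase admissible control. In the first phase we bring $s$ to a low set-point $\sigma$. In the second we switch to an auxostat control of type (iv) at level $\sigma$ and let Theorem~\ref{th:stabi-other} (for $\alpha>0$), or the competitive exclusion principle (for $\alpha=0$), drive $f(t,\cdot)$ towards a profile concentrated near $\bar x$.

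\textbf{Step 1: the limit profile.} First we verify that the limit profile lies strictly inside the target. For Monod kinetics, $\mu(\cdot,z)$ is strictly concave, so case (iv) of Theorem~\ref{th:stabi-other} applies. It gives $f(t,\cdot)\to \bar f_\sigma=(s_{in}-\sigma)\varphi_\sigma/\|\varphi_\sigma\|_1$ in $L^1$. Since $\k$ is continuous in $L^1$ on functions with mass bounded away from zero, it suffices to show the following: there exist $\sigma$ and $\alpha_0$ with $\k[\varphi_\sigma]<k_0$ for all $\alpha<\alpha_0$.

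As $\alpha\downarrow0$, the normalized principal eigenfunction of $\alpha\Delta+\mu(\sigma,\cdot)$ with Neumann conditions concentrates on $\arg\max_z\mu(\sigma,z)=\arg\min r$. One way to see this is the variational characterization, testing with functions supported in a small ball around $\bar x$, combined with the fact that $\int(\lambda_1+\mu)\varphi^2$ controls the mass away from the maximum. Hence $\k[\varphi_\sigma]\to r_0<k_0$. This holds at fixed $\sigma$, for example $\sigma=s_{in}/2$.

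We then pick a margin: $\alpha_0$ such that $\k[\bar f_\sigma]\le (r_0+k_0)/2$. The delicate point is uniformity in $\alpha$. Only a fixed small $\alpha$ is needed, so it suffices to obtain, for each such $\alpha$, a time $T$ beyond which $\k[f(t)]\le k_0$. That follows from $L^1$ convergence for that $\alpha$.

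For $\alpha=0$ we argue directly. Under control (iv), $\partial_t f=(\mu(s,z)-u)f$. With the solution $s(t)$, $f(t,z)=f_0(z)\exp(\int_0^t\mu(s,z)-u)$. Since $\mu$ is increasing in $1/r$ at fixed $s$, ratios $f(t,z)/f(t,z')$ grow like $\exp(\int_0^t[\mu(s,z)-\mu(s,z')])$. The assumption $f_0\ge\kappa\mathds 1_{B_\eta(\bar x)}$ then forces the mass to concentrate where $r<k_0$, provided $\int_0^t s\,d\tau\to\infty$. The last condition follows from $s$ staying near $\sigma$.

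\textbf{Step 2: admissibility.} The feedback (iv) must satisfy $u(t)\le u_{max}$. The control is $u=\frac{1}{s_{in}-\sigma}\int\mu(\sigma,z)f\le \upsilon m(t)/(s_{in}-\sigma)$, so we need a bound on the mass $m$. The quantity $M=s+m$ satisfies $\dot M=u(s_{in}-M)$, so $M\to s_{in}$ monotonically whenever $u>0$; in particular $m\le\max(s_{in},M(0))$. With $\sigma\le s_{in}/2$ this gives $u\le 2\upsilon\max(s_{in},M_0)/s_{in}$. The constant $\bar u=\max(\upsilon,4\upsilon s_{in})$ is designed to cover this.

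The initial phase handles large $M(0)$ or the wrong $s$. We apply $u=u_{max}$, or $u=0$ if $s$ is above $\sigma$ and must decrease through consumption, on a finite time interval. This brings $(s,m)$ into the region $M\le 2s_{in}$, $s\le s_{in}/2$. The bound $\dot s\ge -\upsilon m$ and the linear decay of $M-s_{in}$ under a constant control make this phase finite. It also preserves membership in $\F$, since $f$ is multiplied by a positive factor and, for $\alpha>0$, becomes positive everywhere. We then restart the feedback from the new state. The resulting control is measurable with values in $[0,u_{max}]$.

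\textbf{Step 3: conclusion.} Concatenating the phases, we apply Theorem~\ref{th:stabi-other} from the switching time, with the new initial data. This gives $T$ such that $\k[f(t)]\le k_0$ for all $t\ge T$; the inclusion persists because the convergence is asymptotic.

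The main obstacle is Step 1 for $\alpha>0$: establishing concentration of $\varphi_\sigma$ near $\arg\min r$ as $\alpha\to0$, with a quantitative link to $k_0$. I would first try the Rayleigh quotient argument. Failing that, I would use a comparison and sub/supersolution argument on the Neumann eigenproblem.
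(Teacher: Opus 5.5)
For $\alpha>0$ your construction is the paper's. You use a finite bounded phase that makes $s+m\le 2s_{in}$, then the auxostat (iv) with $\sigma\le s_{in}/2$, Theorem~\ref{th:stabi-other}, and concentration of $\varphi_{\sigma,\alpha}$ as $\alpha\downarrow 0$ (the paper's Lemma~\ref{lem:varphi}). Your extra $u=0$ sub-phase forcing $s\le s_{in}/2$ is harmless but unnecessary. Theorem~\ref{th:stabi-other} accepts any $s(T_0)\in(0,s_{in})$, and admissibility only uses $\sigma\le s_{in}/2$ and $s+m\le 2s_{in}$.

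The step you flag as the main obstacle does not close with the mechanism you propose. The identity $\alpha\|\nabla\varphi\|_2^2=\int_\O(\lambda_1+\mu(\sigma,\cdot))\varphi^2$ together with $\lambda_1(\alpha)\to-\max\mu(\sigma,\cdot)$ only gives $\int_\O(\max\mu(\sigma,\cdot)-\mu(\sigma,z))\varphi^2\,dz\to0$ for $\|\varphi\|_2=1$. That is concentration of $\varphi^2\,dz$. But $\opk{\varphi}$ is weighted by $\varphi\,dz$, and $L^2$-concentration does not imply $L^1$-concentration. For instance, a flat tail of height $h\to0$ is negligible in $L^2$ against a spike of unit $L^2$-mass and width $w$, yet dominates it in $L^1$ when $h\gg w^{d/2}$. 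The identity you need comes from integrating \eqref{eq:varphi} over $\O$, where the Neumann condition kills the Laplacian:
\[
\int_\O\big(\max\mu(\sigma,\cdot)-\mu(\sigma,z)\big)\varphi_{\sigma,\alpha}(z)\,dz=\big(\max\mu(\sigma,\cdot)+\lambda_1(\alpha)\big)\|\varphi_{\sigma,\alpha}\|_1 .
\]
For Monod kinetics one has $r(z)-r_0=\bar\mu\sigma\,\frac{\max\mu(\sigma,\cdot)-\mu(\sigma,z)}{\mu(\sigma,z)\max\mu(\sigma,\cdot)}\le C\big(\max\mu(\sigma,\cdot)-\mu(\sigma,z)\big)$. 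Hence $0\le\opk{\varphi_{\sigma,\alpha}}-r_0\le C\big(\max\mu(\sigma,\cdot)+\lambda_1(\alpha)\big)\to0$, using nothing beyond your test-function bound on $\lambda_1$. This is more direct than the paper's weak-$*$ argument. Since it uses the test function $1$, it also accounts for mass near $\partial\O$, which test functions compactly supported in $\O$ do not see.

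For $\alpha=0$ your route genuinely differs from the paper's, and it is sound. The paper uses the constant control $u\equiv\upsilon/2$ and invokes the concentration results of \cite{Desvillettes2008,Mirrahimi2012a,Perthame2007}: $f(t,\cdot)$ converges weakly-$*$ to a measure carried by $\arg\min r$, and $\k$ passes to the limit. You use only the explicit formula $f(t,z)=f_0(z)\exp\int_0^t(\mu(s,z)-u)\,d\tau$, in which $u$ cancels in ratios. You combine it with the Monod gap $\mu(s,z')-\mu(s,z)\ge c\,s$ whenever $r(z)-r(z')\ge\delta$ and $s\le s_{in}$. This needs no convergence of $f$ and no identification of a limit. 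It works for any admissible control along which $\int_0^\infty s\,d\tau=\infty$. Under (iv) that condition is immediate: $\dot s$ has the sign of $\sigma-s$, so $s(t)\ge\min(s(T_0),\sigma)>0$ after the switch. Note that Theorem~\ref{th:existence-closeloop} is stated for $\alpha>0$, so closed-loop well-posedness at $\alpha=0$ must come from Cauchy--Lipschitz in $\R\times L^1$.

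You do need a margin, because concentration merely on $\{r<k_0\}$ does not by itself give $\opk{f}\le k_0$. Fix $r_0<k''<k'<k_0$ and $\eta'\le\eta$ with $r<k''$ on $B_{\eta'}(\bar x)\cap\O$. The ratio of the mass of $\{r\ge k'\}$ to that of $B_{\eta'}(\bar x)\cap\O$ is then at most $\frac{\|f_0\|_1}{\kappa|B_{\eta'}(\bar x)\cap\O|}e^{-c\int_0^t s\,d\tau}\to0$. Hence $\opk{f(t,\cdot)}\le k'+(r_1-k')\,o(1)<k_0$ for large $t$.
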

 The proof of this proposition is detailed in Section~\ref{secTarget-1}.
\subsubsection{Minimum time results}% (section \ref{sec-existence})}
Our last contribution is devoted to the study of \eqref{OCP0} when 
$\alpha$ is small enough (in accordance to Proposition~\ref{prop:controlability}). Based on the previous results, we establish the existence of an optimal control.    

\begin{theorem}{\label{thm-bigBoss}}
Let  $u_{max}\ge \bar u$.  Given $k_0\in (r_0,r_1)$,  there is $\alpha_0$ such that for all $\alpha\in[0,\alpha_0)$ and for all $(s_0,f_0)\in\mathcal F$, the minimum-time problem \eqref{OCP0} admits an optimal control 
$u^*\in \U$ such that the associated state $(f^*,s^*)$ reaches the target set $\mathcal{T}_0$ in a time $T^*$. 
\end{theorem}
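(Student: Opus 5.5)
We will use the direct method of the calculus of variations, with Proposition~\ref{prop:controlability} guaranteeing that the infimum in \eqref{OCP0} is finite. Fix $\alpha_0$ as in Proposition~\ref{prop:controlability} and $(s_0,f_0)\in\F$. By that proposition the admissible set of controls $u\in\U$ for which $f(t,\cdot)\in\T_0$ for all $t\ge T_u$ is nonempty, so $T^*:=\inf T_u<+\infty$. We take a minimizing sequence $(u_n)$ with $T_{u_n}\downarrow T^*$ and associated states $(s_n,f_n)$, given by Theorem~\ref{th-existence} (or Theorem~\ref{th:existence-alpha0} when $\alpha=0$). Since $\U$ is bounded in $L^\infty(\R_+)$, after extraction $u_n\rightharpoonup u^*$ weakly-$*$ in $L^\infty(\R_+)$. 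The box constraint $[0,u_{max}]$ is convex and closed, so $u^*\in\U$. The goal is then to show that the states converge to the state $(s^*,f^*)$ associated with $u^*$, and that the reach-and-stay constraint passes to the limit.

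\textbf{Compactness of states.} From the equation for the total biomass $m_n=\|f_n\|_1$ and for $s_n$, we get uniform bounds: $s_n\in[0,s_{in}]$, $m_n$ bounded on bounded time intervals, and $\dot s_n$ uniformly bounded. By Arzel\`a--Ascoli, $s_n\to s^*$ locally uniformly on $\R_+$. For $f_n$, the key remark is that the control enters linearly and multiplicatively. Writing
\[
f_n(t,z)=e^{-\int_0^t u_n}\,g_n(t,z),
\]
the function $g_n$ solves $\partial_t g=\mu(s_n,z)g+\alpha\Delta g$, which does not involve $u_n$. Since $s_n\to s^*$ uniformly on compacts and the problem for $g$ is Lipschitz in $s$ in $C([0,T],L^1)$ (Gronwall, using Hypothesis~\ref{hyp1} and the $L^1$-contraction of the Neumann heat semigroup), we get $g_n\to g^*$ in $C([0,T],L^1)$. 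Moreover $\int_0^t u_n\to\int_0^t u^*$ uniformly on $[0,T]$ by weak-$*$ convergence plus equicontinuity. Hence $f_n\to f^*:=e^{-\int_0^t u^*}g^*$ in $C([0,T],L^1)$. It remains to check that $(s^*,f^*)$ is the mild solution for $u^*$. The substrate equation is linear in $u$ apart from the product $u_n(s_{in}-s_n)$, which converges weakly since $s_n$ converges strongly. By uniqueness in Theorem~\ref{th-existence}, this limit is the state associated with $u^*$, and the whole subsequence converges.

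\textbf{Passing the target constraint to the limit.} This is the step I expect to be the main obstacle, and it rests on two facts. First, $m^*(t)>0$ for all $t$: the initial data in $\F$ are non-null, and $g^*\ge e^{-Ct}e^{\alpha t\Delta}f_0$ gives a uniform lower bound on $m_n$ on compacts. Second, $\k$ is continuous on $\{f\in L^1: f\ge0,\ \|f\|_1>0\}$ because $r$ is bounded. Now fix $t>T^*$. Then $t\ge T_{u_n}$ for $n$ large, so $\k[f_n(t)]\le k_0$, and letting $n\to\infty$ gives $\k[f^*(t)]\le k_0$. By continuity of $t\mapsto f^*(t)$ in $L^1$, the same holds at $t=T^*$. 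Hence $T_{u^*}\le T^*$, and $u^*$ is optimal. The case $\alpha=0$ is handled identically: $g_n(t,z)=f_0(z)e^{\int_0^t\mu(s_n,z)}$ is explicit, and convergence follows by dominated convergence.
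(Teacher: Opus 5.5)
Your proof is correct, and it reaches compactness by a different and more elementary route than the paper.

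The paper works with $\tilde f_n=e^{\int_0^t u_n}f_n$ as a solution of the nonlocal nonlinear problem \eqref{eq-red:bis}, in which the substrate has been eliminated through $M_n$ and $\tilde m_n$. It must therefore extract compactness from $(\tilde f_n)$ itself, and it does so differently in the two regimes:
\begin{itemize}
\item for $\alpha>0$, via heat-kernel $L^\infty$ smoothing on $[\delta,T]$, Schauder estimates and diagonal extraction, followed by passage to the limit in the Duhamel formula;
\item for $\alpha=0$, where there is no smoothing, only via weak-$*$ compactness in measures (the Aubin--Lions type Lemma \ref{lem:radon-compact}), followed by a Gronwall/uniqueness argument showing that the limit measures are absolutely continuous and coincide with $\tilde f^*$.
\end{itemize}

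You instead put all the compactness on the scalar, uniformly Lipschitz functions $s_n$ (Arzel\`a--Ascoli) and keep $s_n$ as an exogenous coefficient. Then $g_n$ solves a \emph{linear} equation whose solution depends Lipschitz-continuously on $s$ in $C([0,T],L^1(\O))$. This needs only the $L^1$-contractivity of the Neumann semigroup, or the explicit formula when $\alpha=0$.

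Your approach buys the following:
\begin{itemize}
\item it treats $\alpha>0$ and $\alpha=0$ in one stroke;
\item it dispenses with Schauder theory and with the measure-valued lemma;
\item it yields strong convergence $f_n(t)\to f^*(t)$ in $L^1$ for every $t$. Combined with the mass bound $m_n(t)\ge e^{-u_{max}t}\|f_0\|_1$, this makes passing $\mathcal{K}[f_n(t)]\le k_0$ to the limit immediate, and the endpoint $t=T^*$ follows by continuity.
\end{itemize}
In effect you prove that the control-to-state map is continuous from the weak-$*$ topology of $L^\infty$ into $C([0,T],L^1(\O))$, which is a cleaner and more reusable statement.

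The paper's route additionally gives higher regularity of the limit when $\alpha>0$ (bounds in $C^{1,\beta'/2}$ in time with values in $C^{2,\beta'}$ on $(0,T]$), but this is not needed for the existence result.
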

The proof of this theorem is detailed in Section~\ref{sec-existence-1}. Finally, we complement these theoretical results with numerical simulations illustrating  convergence of solutions under auxostat-type controls in Section~\ref{sec-num-auxostat}, and sub-optimal solutions to the minimum time control problem in Section~\ref{sec-existence-2}.
\smallskip
\section{Existence of solutions to (\ref{main})}\label{sec1}
The goal of this section is to prove \Cref{th-existence}. 
\subsection{Reformulation of the state equation}
Before going any further, we make some preliminary remarks on system \eqref{main}. 
First, we extend the function $\mu(\cdot,z)$
by continuity to $s\leq 0$ by setting it equal to $0$, and we keep the notation $\mu$ for this extension. Next, we rewrite system \eqref{main} in a more convenient form. For this purpose, for $t\geq 0$, let us set 
$$\ds{M(t):=s(t)-s_{in}+m(t)},$$ 
where $m$ represents the total population $$m(t)=\int_\Omega f(t,z) \, dz.$$ A quick computation shows that $M$ satisfies the following Cauchy problem:
\begin{equation}\label{eq:edo-M}
\frac{d}{dt} M(t)=-u(t)M(t), \quad  M(0)=s(0)-s_{in}+m(0),
\end{equation}
and thus, for all time $t\geq 0$, one has 
$$
\ds M(t)=M(0) e^{-\int_{0}^t u(\tau)\,d\tau}.
$$
As a remark, the total biomass concentration can be written
\begin{equation}{\label{totalBiomass}}
 m(t)=s_{in}-s(t)+(s(0)-s_{in}+m(0))e^{-\int_{0}^t u(\tau)\,d\tau}, \quad t\geq 0. 
\end{equation}
It follows that \eqref{main} is equivalent to the following system (in which the equation for $f$ is time dependent)
\begin{equation}\label{eq-red}
\begin{dcases}
\partial_t f(t,z)=\left[\mu\left(M(t)+s_{in}-m(t),z\right)- u(t)\right]f(t,z) +\alpha \Delta f(t,z) \; &\text{ for   }\; z\in\O,\\
\partial_{\vec{n}} f(t,z)=0 \quad &\text{ for   } z\in\partial\O,\\
f(0,z)=f_0(z)\ge 0 \quad &\text{ for   } z\in\O,
\end{dcases}
\end{equation}
where $t>0$. 
It is readily seen that $f$ is a solution to \eqref{eq-red} if and only if the function 
$$
\tilde{f}(t,x):= e^{\int_{0}^t u(\tau) \,d\tau}f(t,x)
$$
is a solution to the system 
\begin{equation}\label{eq-red:bis}
\begin{dcases}
\partial_t \tilde f(t,z)=\left[\mu\left(M(t)+s_{in}-\frac{M(t)}{M(0)}\tilde m(t),z\right)\right]\tilde f(t,z) +\alpha \Delta \tilde f(t,z) \; &\text{ for }z\in\O,\\
\partial_{\vec{n}} \tilde f(t,z)=0 \quad &\text{ for  } z\in\partial\O,\\
\tilde f(0,z)=f_0(z)\ge 0 \quad &\text{ for   } z\in\O,
\end{dcases}
\end{equation}
where $t>0$. 
Hence, solving \eqref{eq-red:bis} provides a solution to  \eqref{eq-red} and thus a solution to \eqref{main}.
In the remaining of this section, our aim is to construct of positive solution to \eqref{eq-red:bis}.  
%This question is addressed by the following result. 
\subsection{Proof of Theorem \ref{th-existence}}
In this part, we show the two next theorems. Note that Theorem \ref{th-existence} about existence and uniqueness of a solution to \eqref{main} follows from Theorem \ref{thm:existence2} below (the substrate $s$ can indeed be deduced from $f$ via \eqref{totalBiomass}). 
\begin{theorem}
\label{thm:existence1}
Let $\O\subset \R^d$ be a bounded domain with a boundary of class $C^1$ and let $u\in L^{\infty}_{loc}(\R_+)$. Suppose also that $\alpha>0$ and that $\mu$ satisfies Hypotheses  \ref{hyp0}-\ref{hyp1}. Then, for all initial data  $f_0\ge 0$ such that $f_0\in L^1(\O)$, there exists a nonnegative function $\tilde f\in C(\R_+,L^1(\O))\cap C^1(\R_+^*,C^{2,\beta}(\O)\cap C^{0,\beta}(\bar \O))$ solution to \eqref{eq-red:bis}, for some $\beta\in(0,1)$.     
\end{theorem}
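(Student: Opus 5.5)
We prove Theorem \ref{thm:existence1} by a fixed-point argument on the total mass. The only nonlinearity in \eqref{eq-red:bis} enters through the scalar $\tilde m(t)$. For a given continuous nonnegative function $\tilde n$, the equation for $\tilde f$ becomes a \emph{linear} parabolic equation with potential
\[
a_{\tilde n}(t,z):=\mu\Big(M(t)+s_{in}-\tfrac{M(t)}{M(0)}\tilde n(t),z\Big),
\]
which is bounded by $\upsilon$ (recall $\mu$ is extended by $0$ for negative $s$). It is also measurable in $t$ and Lipschitz in $z$, by Hypotheses \ref{hyp0}-\ref{hyp1}. The degenerate case $M(0)=0$ is handled directly: then $M\equiv 0$ and the argument of $\mu$ is $s_{in}-m$, so we work with \eqref{eq-red} instead of the $\tilde f$ form.

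\textbf{Step 1: the linear problem.} For fixed $\tilde n$, write the mild formulation with the Neumann heat semigroup $e^{t\alpha\Delta}$:
\[
\tilde f(t)=e^{t\alpha\Delta}f_0+\int_0^t e^{(t-\tau)\alpha\Delta}\big(a_{\tilde n}(\tau,\cdot)\tilde f(\tau)\big)\,d\tau .
\]
Since $a_{\tilde n}$ is bounded, a Picard iteration in $C([0,T],L^1)$ gives a unique solution. Nonnegativity follows from positivity of the semigroup and $a_{\tilde n}\ge 0$. Moreover $\|\tilde f(t)\|_1\le e^{\upsilon t}\|f_0\|_1$.

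\textbf{Step 2: the mass map.} Define $\Phi(\tilde n)(t):=\|\tilde f(t)\|_1$. We show that $\Phi$ is a contraction on $C([0,T])$ for $T$ small, using the weighted norm $\sup_t e^{-Lt}|\cdot|$ so that $T$ can be taken arbitrary. By Hypothesis \ref{hyp1}, $\mu$ is Lipschitz in $s$ on bounded sets. The arguments of $\mu$ stay in a bounded set because $|M(t)|\le|M(0)|$ and $\tilde n$ ranges in the a priori ball $[0,e^{\upsilon T}\|f_0\|_1]$ (we truncate $\tilde n$ there if needed). Hence $\|a_{\tilde n_1}-a_{\tilde n_2}\|_\infty\le C|\tilde n_1-\tilde n_2|$. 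A Gronwall estimate on the difference of the mild solutions, using $L^1$-contractivity of the heat semigroup, then yields $\|\tilde f_1(t)-\tilde f_2(t)\|_1\le C\int_0^t|\tilde n_1-\tilde n_2|\,d\tau$. Banach's theorem gives a unique fixed point on every $[0,T]$, and therefore a global mild solution of \eqref{eq-red:bis}.

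\textbf{Step 3: regularity (main obstacle).} Because $u$ is only $L^\infty$, the potential $a(t,z)$ is merely measurable in $t$, so classical Schauder theory in time is unavailable. We plan to first use $L^1\to L^\infty$ smoothing of the Neumann heat kernel, which places $\tilde f(t)$ in $L^\infty$ for $t>0$. Next, $L^p$ parabolic estimates and Sobolev embedding give H\"older continuity in $(t,z)$ on $[\epsilon,T]\times\bar\Omega$. This shows $\tilde m$ is Lipschitz, hence $M$ and the argument $M+s_{in}-\frac{M}{M(0)}\tilde m$ are Lipschitz in $t$. The key observation is that the substrate argument of $\mu$ in \eqref{eq-red:bis} is then Lipschitz in time even though $u$ is not: $u$ enters only through $M(t)=M(0)e^{-\int_0^t u}$, which is Lipschitz. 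Consequently $a$ is H\"older in $(t,z)$, Schauder interior and boundary estimates give $\tilde f\in C^{1,\beta/2}$ in time with values in $C^{2,\beta}(\O)\cap C^{0,\beta}(\bar\O)$ for $t>0$, and the mild solution is classical. Continuity at $t=0$ in $L^1$ comes from strong continuity of the semigroup. Finally, the strong maximum principle gives $\tilde f>0$ for $t>0$ when $f_0\not\equiv 0$.
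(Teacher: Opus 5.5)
Your proposal is correct, but it takes a different route from the paper's.

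\textbf{How the paper proceeds.} The paper never uses the mild formulation for existence. It mollifies the datum, $f_{0,\eps}=\rho_\eps\star f_0$, and defines the iterates \eqref{eq-ex-1}--\eqref{coeff} as classical solutions of linear problems. It bounds them by comparison with $e^{C_0t}v_\eps$, derives uniform $W^{1,2,p}$ and Schauder bounds, and passes to the limit by compactness, first in $n$ and then as $\eps\to0$. For the second limit it uses the heat-kernel bound $v_\eps\le Ct^{-d/2}\|f_0\|_1$ to get estimates on $[\delta,T]$ that are uniform in $\eps$. Uniqueness is proved separately in Theorem~\ref{thm:existence2}, by a Gr\"onwall estimate on the masses followed by the maximum principle.

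\textbf{How your route differs.} You reduce the nonlocal nonlinearity to the scalar unknown $\tilde m$ and solve the frozen linear problem in $C([0,T],L^1)$ with the Neumann semigroup. You then close with a Banach fixed point in a weighted sup-norm and recover regularity only afterwards. This gives existence, uniqueness and continuity at $t=0$ together, directly for $L^1$ data.

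It also supplies what the paper's compactness argument leaves implicit. In the paper's Step~1 only a subsequence $\tilde f_{n_k,\eps}$ is extracted, while $c_{n_k}$ involves $\tilde m_{n_k-1,\eps}$. Identifying the limit as a solution of \eqref{eq-red:bis} therefore requires convergence of the whole iteration, and your mass-contraction estimate provides exactly that. Likewise, $L^1$-continuity of the limit at $t=0$ does not follow from bounds that are uniform only on $[\delta,T]$, whereas it is automatic in your mild framework. In exchange, the paper's approach manipulates only classical solutions and obtains smoothness along the way. It never needs to identify mild and strong solutions before applying the $L^p$ estimates, a step you should at least cite.

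\textbf{Small corrections.}
\begin{itemize}
\item The bound on $a_{\tilde n}$ should be $C_0=\sup_{[0,s_{in}+s_0+m(0)]\times\O}\mu$ rather than $\upsilon$. For a frozen $\tilde n$, the argument $M(t)+s_{in}-e^{-\int_0^tu}\tilde n(t)$ can exceed $s_{in}$ when $s_0+m(0)>s_{in}$.
\item The potential is not ``merely measurable in $t$''. Since $u$ enters only through $M(t)/M(0)=e^{-\int_0^tu}$, $a_{\tilde n}$ is continuous in $t$ from the outset. This also makes the case $M(0)=0$ harmless without switching to \eqref{eq-red}.
\item Applying $L^1\to L^\infty$ smoothing directly to the Duhamel term produces the factor $(t-\tau)^{-d/2}$, which is not integrable for $d\ge 2$. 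Use instead the comparison $0\le\tilde f(t)\le e^{C_0t}e^{t\alpha\Delta}f_0$, which follows from positivity of the semigroup and $a_{\tilde n}\le C_0$ and is essentially what the paper does. Alternatively, bootstrap through $L^p$.
\end{itemize}
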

Assume temporarily that \Cref{thm:existence1} holds. By exploiting the  
relationship between $f$ and $\tilde f$, we deduce the following result (which implies \Cref{th-existence}). 
\begin{theorem}
\label{thm:existence2}
Let $\O\subset \R^d$ be a bounded domain with a boundary of class $C^1$ and let $u\in L^{\infty}_{loc}(\R_+)$. Suppose also that $\alpha>0$ and that $\mu$ satisfies Hypotheses  \ref{hyp0}-\ref{hyp1}. Then, for all initial data  $f_0\ge 0$ such that $f_0\in L^1(\O)$, there exists a unique nonnegative solution $f\in C(\R_+,L^1(\O))\cap C^{0,1}_{loc}(\R_+^*,C^{2,\beta}(\O)\cap C^{0,\beta}(\bar \O))$ solution to \eqref{eq-red} for some $\beta\in(0,1)$. Moreover, if $u\in C(\R)$, then $f\in C^1(\R_+^*,C^{2,\beta}(\O)\cap C^{0,\beta}(\bar \O))$. 
\end{theorem}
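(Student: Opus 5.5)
The plan is to deduce Theorem~\ref{thm:existence2} from Theorem~\ref{thm:existence1} through the change of unknown $f(t,z)=e^{-\int_0^t u(\tau)\,d\tau}\tilde f(t,z)$, and then to prove uniqueness directly by a Gronwall argument in $C([0,T],L^1(\O))$.

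\emph{Existence.} Let $\tilde f$ be the nonnegative solution of \eqref{eq-red:bis} given by Theorem~\ref{thm:existence1}. Set $U(t):=\int_0^t u(\tau)\,d\tau$ and $f:=e^{-U}\tilde f$. Since $u\in L^\infty_{loc}(\R_+)$, the map $t\mapsto e^{-U(t)}$ is locally Lipschitz. It is of class $C^1$ exactly when $u$ is continuous. The product $f$ therefore inherits the regularity of $\tilde f$ except in time: we get $f\in C(\R_+,L^1(\O))$ and $f\in C^{0,1}_{loc}(\R_+^*,C^{2,\beta}(\O)\cap C^{0,\beta}(\bar\O))$, because a product of a locally Lipschitz scalar function and a $C^1$ Banach-valued function is locally Lipschitz. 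When $u\in C(\R)$ the product is $C^1$.

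To check that $f$ solves \eqref{eq-red}, we use $\tilde m(t)=e^{U(t)}m(t)$ and $M(t)/M(0)=e^{-U(t)}$, which give $\frac{M(t)}{M(0)}\tilde m(t)=m(t)$. Hence the two nonlinear arguments of $\mu$ coincide. Differentiating $f=e^{-U}\tilde f$ almost everywhere in $t$, or in the mild (Duhamel) sense, produces the extra term $-u f$, which is exactly the equation in \eqref{eq-red}. The Neumann condition and the initial datum are unchanged. Nonnegativity of $f$ follows from that of $\tilde f$.

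A small point must be handled if $M(0)=0$, since \eqref{eq-red:bis} is then written with a quotient. In that case we read $\frac{M(t)}{M(0)}$ as $e^{-U(t)}$ throughout, and the argument is unchanged.

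\emph{Uniqueness.} Let $f_1,f_2$ be two nonnegative mild solutions of \eqref{eq-red} on $[0,T]$. Set $s_i:=M(t)+s_{in}-m_i(t)$, and truncate $\mu$ so that it is Lipschitz in $s$ on the relevant bounded range; this uses Hypothesis~\ref{hyp1}, the extension of $\mu$ by $0$ for $s\le 0$, and the a priori bound on $m_i$ from \eqref{totalBiomass}. Write the Duhamel formula with the Neumann heat semigroup $e^{\alpha t\Delta}$, which is a contraction on $L^1$. Subtracting the two formulas gives
\[
\|f_1-f_2\|_1(t)\le \int_0^t \Big(\|\mu(s_1,\cdot)-u\|_\infty\|f_1-f_2\|_1+b_S|s_1-s_2|\,\|f_2\|_1\Big)\,d\tau .
\]
Since $|s_1-s_2|=|m_1-m_2|\le\|f_1-f_2\|_1$ and $\|f_2\|_1$ is bounded on $[0,T]$, Gronwall's lemma gives $f_1=f_2$.

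\emph{Main obstacle.} The step needing most care is the regularity claim: in time we only get Lipschitz (not $C^1$) regularity when $u$ is merely measurable, so the equation has to be understood in the mild sense. The key is to verify that the mild formulation transfers correctly under multiplication by the absolutely continuous factor $e^{-U}$. For this I would use the identity $e^{-U(t)}e^{\alpha(t-\tau)\Delta}g=e^{\alpha(t-\tau)\Delta}(e^{-U(t)}g)$ together with an integration by parts in $\tau$.
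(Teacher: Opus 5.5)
Your existence and regularity argument is the same as the paper's. You set $f=e^{-U}\tilde f$ with $\tilde f$ from Theorem~\ref{thm:existence1}, and you read off the time regularity from the locally Lipschitz (respectively $C^1$) character of $e^{-U}$.

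Your uniqueness proof follows a genuinely different route. The paper proceeds in three steps:
\begin{itemize}
\item it passes to $\tilde f_1,\tilde f_2$;
\item it runs Gr\"onwall only on the scalar mass difference $h=\tilde m_1-\tilde m_2$ to get $m_1=m_2$;
\item it applies the weak parabolic maximum principle to the resulting linear equation \eqref{eq-uniq-1} for $w=f_1-f_2$.
\end{itemize}
You instead subtract the Duhamel formulas and run Gr\"onwall directly on $\|f_1-f_2\|_1$. This uses two facts: the Neumann heat semigroup is an $L^1$ contraction, and $|s_1-s_2|=|m_1-m_2|\le\|f_1-f_2\|_1$.

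What your route buys is more than brevity. When $\mu$ depends on $z$, the mass equation alone is not closed. The paper's bound $\bigl|\int_{\O}\mu(\,\cdot\,,z)(\tilde f_1-\tilde f_2)(t,z)\,dz\bigr|\le C_0|h(t)|$ really needs $\|\tilde f_1-\tilde f_2\|_1$ on the right-hand side, not $|h(t)|$. That is exactly the quantity your estimate controls, so your single $L^1$ Gr\"onwall closes where the paper's mass-only Gr\"onwall does not. It also makes the separate maximum-principle step unnecessary. The paper's comparison approach only requires a pointwise maximum principle for classical solutions of a linear problem, and would be natural once $m_1=m_2$ is known.

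Yours instead requires that any solution in the stated class satisfies the mild formulation. This is standard but worth stating: derive Duhamel on $[\epsilon,t]$ using the smoothness for $t>0$, then let $\epsilon\downarrow0$ using continuity in $L^1$.

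Your remark that $M(t)/M(0)$ should be read as $e^{-U(t)}$ when $M(0)=0$ also fills a small gap the paper leaves implicit.
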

\begin{proof}
    The existence of a solution follows from Theorem~\ref{thm:existence1} setting 
    $$
    f(t,z):= e^{-\int_{0}^t u(\tau) \,d\tau} \tilde{f}(t,z), 
    \quad (t,z)\in \R_+^* \times \Omega.
    $$ 
    It remains to show that the solution is unique. 
    The regularity properties of the solution $f$ depends only on the regularity of the control $u$ and, since for $u\in L^{\infty}_{loc}(\R)$, the function $t\mapsto \ds e^{\int_{0}^t u(\tau) \,d\tau}$ 
    belongs to the space\footnote{It is the space of real-valued functions on $\R$ such that, on every bounded interval, the function and its first weak derivative belong to $L^p(\R)$.} $W^{1,p}_{loc}(\R)$ for all $p\ge 1$ and is, in particular, locally Lipchitz, we deduce that $f\in C^{0,1}_{loc}(\R_+^*,C^{2,\beta}(\O)\cap C^{0,\beta}(\bar \O))\cap C(\R_+,L^{1}(\O)) $. 
    Suppose there exist two distinct positive solutions $ f_1(t,z),f_2(t,z)$  to  \eqref{eq-red} with identical initial data. Then the corresponding functions
    $\tilde f_1,\tilde f_2$ solve \eqref{eq-red:bis} and inherit the same initial data. By integrating \eqref{eq-red:bis} with respect to $z$,  the function
    $\tilde m_i=\|\tilde f_i\|_{1}$  satisfies:  
\begin{equation*}%*{\label{tmp1}}
    \begin{dcases}
        \frac{d}{dt}\tilde m_i(t) = \int_{\O}\mu\left(M(t)+s_{in}-\frac{M(t)}{M(0)}\tilde m_i(t),z\right)\tilde f_i(t,z)\, dz,\\
        \tilde m_i(0)=m(0),
        \end{dcases}
    \end{equation*}
for $i=1,2$. By definition of $M$, we also have 
\begin{align}
&\frac{M(t)}{M(0)}=e^{-\int_{0}^t u(\tau)\,d\tau},\label{eq:esti-M/M0}\\
&M(t)+s_{in}=M(0)e^{-\int_{0}^t u(\tau)\,d\tau}+s_{in}= (m(0)+s_0)e^{-\int_{0}^t u(\tau)\,d\tau}+s_{in}\left(1-e^{-\int_{0}^tu(\tau)\,d\tau}\right), 
\end{align}
for all $t\ge 0$, and thus, for $i=1,2$:
\begin{equation}\label{eq:esti-M}
\forall t\geq 0, \; M(t)+s_{in}-\frac{M(t)}{M(0)}\tilde m_i<s_{in}+s_0+m(0).
\end{equation}
    Let us now define a function $h:\R_+\rightarrow \R$ as $${h(t):= \int_{\O}[\tilde f_1(t,z)-\tilde f_2(t,z)]\,dz}=\tilde m_1(t)-\tilde m_2(t), \quad t\geq 0.$$ We have $h(0)=0$ and, using \eqref{eq-red:bis}, we deduce that for all $t>0$, 
    \begin{multline*}
        \dot h(t) = \int_{\O}\mu\left(M(t)+s_{in}-\frac{M(t)}{M(0)}\tilde m_1(t),z\right)\tilde f_1(t,z)\,dz  \\ - \int_{\O}\mu\left(M(t)+s_{in}-\frac{M(t)}{M(0)}\tilde m_2(t),z\right)\tilde f_2(t,z)\, dz 
        =:\mathrm{rhs}(t)
    \end{multline*}
       Now since $\mu$ satisfies Hypotheses  \ref{hyp0}-\ref{hyp1}, there is $C\geq 0$ such that for all $t\geq 0$, 
     {\small{$$ \left|\mu\left(M(t)+s_{in}-\frac{M(t)}{M(0)}\tilde m_1(t),z\right)-\mu\left(M(t)+s_{in}-\frac{M(t)}{M(0)}\tilde m_2(t),z\right)\right|\le C\left|\frac{M(t)}{M(0)}\right||h(t)|,$$}}
\noindent and, therefore, by using \eqref{eq:esti-M/M0} and \eqref{eq:esti-M}, the expression
$\mathrm{rhs}(t)$  can be estimated as follows:
     \begin{align*}
      |\mathrm{rhs}(t)|&\le C |h(t)|\tilde m_1(t)  + \int_{\O}\mu\left(M(t)+s_{in}-\frac{M(t)}{M(0)}\tilde m_2(t),z\right)(\tilde f_1(t,z) -\tilde f_2(t,z))\, dz\\
      &\le C|h(t)|\tilde m_1(t)+  C_0 |h(t)|, \quad t\geq 0,
     \end{align*}
     where \begin{equation}{\label{tmpC0}}C_0:=\ds \sup_{(s,z)\in \Lambda} \; \mu(s,z) \quad \mathrm{with} \quad \Lambda:=[0,s_{in}+s_0+m(0)]\times \Omega.\end{equation}
    Thus, the function $h$ satisfies
    \[ \forall t > 0, \; |\dot h(t)| \le \left(C\tilde m_1(t)+ C_0\right) |h(t)|.\]
    From the above differential inequality and by using  
    %a standard argumentation based on 
    Gr\"onwall's inequality, we can see  that $h(t)= 0$ for all $t\ge0$. 
    As a consequence we have $\tilde m_1(t)= \tilde m_2(t)$ for all $t\geq 0$ and 
    thus also  $m_1(t)= m_2(t)$ for all $t\geq 0$. 
    Let us now set $w:=f_1 -f_2$. A quick calculation yields 
    \begin{equation}\label{eq-uniq-1}
    \begin{dcases}
    \partial_t w(t,z)=\left[\, \mu\left(M(t)+s_{in}-m_1(t),z\right) -u(t)\right] w(t,z) +\alpha \Delta_{z} w(t,z) & \text{ for } z\in\O,\\
    \partial_{\vec{n}} w(t,z)=0 \quad &\text{ for } z\in\partial\O,\\
    w(0,z)=0 \quad &\text{ for } z\in\O,
    \end{dcases}
    \end{equation}  
for all $t>0$. Hence, by applying the weak parabolic maximum principle we get $w(t,z)\le 0$ for all $(t,z)\in \R_+^* \times \Omega$. Interchanging the roles of  $f_1,f_2$, we also get $-w(t,z)\le 0$ for all $(t,z)\in \R_+^* \times \Omega$.  We conclude that $f_1(t,z)=f_2(t,z)$ for all $(t,z)\in \R_+^* \times \Omega$ as wanted. \end{proof}
We now turn to the proof of  \Cref{thm:existence1}.
\begin{proof}[Proof of \Cref{thm:existence1}] 
First, observe that, since $u\in L^{\infty}_{loc}(\R)$, the function 
$t\mapsto \ds{e^{\int_{0}^t u(\tau) \,d\tau}}$ belongs to the space 
$W^{1,p}_{loc}(\R)$ for all $p\ge 1$, and, in particular, it is H\"older continuous. 
To construct a solution to \eqref{eq-red:bis}, we will follow standard approximation techniques. 
The rest of the proof is divided into two steps. 
\smallskip

\noindent {\it{Step 1: Existence of a solution for a regularized initial condition.}}  
\smallskip
\\
First, we regularize $f_0$ by a smooth mollifier $(\rho_{\eps})_\eps$ with unit mass and we consider 
\eqref{eq-red:bis} together with the initial condition $f_{0,\eps}:=\rho_\eps\star f_0$ in place of $f_0$.
Next, for $\eps>0$ fixed, let us introduce 
a sequence of function $(\tilde f_{n,\eps})_{n\in \N}$ such that $\tilde f_{0,\eps}=f_{0,\eps}$ and where for $n \in \mathbb{N}^*$, the function $\tilde f_{n,\eps}$ is defined recursively by
\begin{equation}\label{eq-ex-1}
\begin{dcases}
\partial_t  \tilde f_{n,\eps}(t,z)=c_n(t,z)\tilde f_{n,\eps}(t,z) +\alpha \Delta_{z} \tilde f_{n,\eps}(t,z)&\text{ for }  t>0, z\in \O, \\
\partial_{\vec{n}} \tilde f_{n,\eps}(t,z)=0 &\text{ for } z\in\partial\O,\\
\tilde f_{n,\eps}(0,z)=f_{0,\eps}(z)\ge 0 & \text{ for } z\in\O,
\end{dcases}
\end{equation}
 where the function $c_n$ is defined by\footnote{Naturally, $\tilde m_{n,\eps}=\|\tilde f_{n,\eps}\|_{1}$, and in the following, $m_{\eps}=\| f_{\eps}\|_{1}$.}:
\begin{equation}{\label{coeff}}
c_n(t,z):=\mu\left(M(t)+s_{in}-\frac{M(t)}{M(0)}\tilde m_{n-1,\eps}(t),z\right) \; \text{ for }\; (t,z)\in \R_+\times \Omega. 
\end{equation}
Since by definition $f_{0,\eps} \in C^{\infty}(\O)$, $M\in C^{0,1}(\R)\cap L^{\infty}(\O)$,  and  $\mu$ is Lipschitz-continuous, from the standard parabolic theory 
(see, {\it{e.g.}}, \cite{Evans1998,Wu2006}), the sequence $(\tilde f_{n,\eps})_{n\in\N}$ is well defined, that is, for each $n\in \mathbb{N}^*$ there is a unique smooth solution $\tilde f_{n,\eps}$ to \eqref{eq-ex-1}  that belongs to the space $E_\beta$ for some $\beta\in (0,1)$ where $$E_\beta:=C(\R_+;C^{2,\beta}(\O)\cap C^{0,\beta}(\bar \O))\cap C^1(\R_+^*;C^{2,\beta}(\O)\cap C^{0,\beta}(\bar \O)).$$ 
Moreover since $f_{0,\eps}\ge 0$ and for each $n\in \mathbb{N}$, the null function is a sub-solution to \eqref{eq-ex-1}, a straightforward application of the parabolic comparison principle, yields  $\tilde f_{n,\eps}\ge 0$ for all $n\in \mathbb{N}$. 
From \Cref{hyp1} and using that $\tilde f_{n,\eps}\ge 0$ for all $n\in \mathbb{N}$, we deduce that
\begin{equation}\label{eq:esti-M-n} 
\forall n \in \mathbb{N}^*, \; \forall t \geq 0, \; M(t)+s_{in}-\frac{M(t)}{M(0)}\tilde m_{n-1,\eps}<s_{in}+s_0+m(0),
\end{equation}
as for proving \eqref{eq:esti-M}. 
Consequently, for all $n \in \mathbb{N}$, 
$\tilde f_{n,\eps}$  satisfies 
\begin{equation*}\label{eq-ex-2}
\begin{dcases}
\partial_t \tilde f_{n,\eps}(t,z)\le C_0\tilde f_{n,\eps}(t,z) +\alpha\Delta_{z} \tilde f_{n,\eps}(t,z) \quad &\text{ for  }  t>0, \; z\in \O, \\
\partial_{\vec{n}} \tilde f_{n,\eps}(t,z)=0 \quad &\text{ for   }t>0, \; z\in\partial\O,\\
\tilde f_{n,\eps}(0,z)=f_{0,\eps}(z)\ge 0 \quad &\text{ for  } z\in \O, 
\end{dcases}
\end{equation*}
where $C_0$ is given by \eqref{tmpC0}. %Now, 
 By applying again the parabolic comparison principle, 
 we get 
\begin{equation}{\label{tmp-mintime}}
\forall n\in \mathbb{N}^*, \; \forall (t,z)\in \R_+^* \times \Omega, \;   \tilde f_{n,\eps}(t,z)\leq e^{C_0t}v_\eps(t,z),
\end{equation} 
  where $v_\eps$ is the unique solution of the  diffusion equation 
  \begin{equation*}\label{eq-ex-3}
\begin{dcases}
\partial_t v_\eps(t,z)= \alpha \Delta_{z} v_\eps(t,z) \quad &\text{ for }  t>0, \; z\in \O, \\
\partial_{\vec{n}} v_\eps(t,z)=0 \quad &\text{ for } t>0, \; z\in\partial\O,\\
v_\eps(0,z)=f_{0,\eps}(z)\ge 0 \quad &\text{ for } z\in \O.
\end{dcases}
\end{equation*}
 Observe that from the comparison principle,  we also have  $\|v_\eps\|_{\infty}(t)\le \|f_{0,\eps}\|_{\infty}$ for all $t\ge0$. Therefore, we deduce that for all $T>0$ and all $n\in\N$, 
$$\forall (t,z)\in [0,T] \times\O, \; \;  f_{n,\eps}(t,z)\le e^{C_0T}\|f_{0,\eps}\|_{\infty}.$$
As a result, since $\mu$ satisfies \Cref{hyp0}, \eqref{eq:esti-M-n} implies that  there is $C'\geq 0$ such that 
\begin{equation}{\label{tmp1}}
\forall n \in \mathbb{N}, \; \forall (t,z)\in  [0,T]\times \Omega, \; F_n(t,z):=|c_n(t,z)\tilde f_n(t,z)|\leq C'e^{C_0T}\|f_{0,\eps}\|_{\infty}.
\end{equation}
Let us rewrite \eqref{eq-ex-1} as follows:
 \begin{equation*}
\begin{dcases}
\partial_t \tilde f_{n,\eps} -\alpha \Delta\tilde f_{n,\eps} = F_n(t,z) \quad &\text{ for }  t>0, \; z\in \O, \\
\partial_{\vec{n}} \tilde f_{n,\eps}(t,z)=0 \quad &\text{ for }t>0, \; z\in\partial\O,\\
\tilde f_\eps(n,z)=f_{0,\eps}(z)\ge 0 \quad &\text{ for }  z\in \O.
\end{dcases}
\end{equation*}
Since for all $T>0, F_n$ belongs to the space $L^{\infty}([0,T]\times \bar \O)$, by standard parabolic $L^p$ estimates (see \cite[Theorem 9.2.2]{Wu2006}), we deduce that for all $T>0$  and all $p\ge 1$, $\tilde f_{n,\eps}$ is now bounded independently of $n$ in the space  $W^{1,2,p}\big(Q_T \big)\cap W^{1,1,p}_{0}\big(Q_T \big)$, where $Q_T := (0,T)\times \O$ and $W^{1,2,p}\big(Q_T \big)$ and $ W^{1,1,p}_{0}\big(Q_T \big)$ denote the  $t$-anisotropic Sobolev spaces defined respectively by:
  \begin{align*}
  &W^{1,2,p}\big(Q_T \big):=\big\{u\in L^p(Q_T ) \; | \;  \partial_t u, \, \nabla u, \, \mathrm{and}\; \partial_{ij}u \in L^p(Q_T ), \; \mathrm{for} \; \mathrm{all} \; 1 \leq i,j\leq d \big\},\\
  &W^{1,1,p}_0\big(Q_T \big):=\big\{u\in L^p(Q_T) \; | \; \partial_{\vec{n}}u=0 \text{ on } \partial \O \; \mathrm{and} \; \partial_t u, \nabla u \in L^p(Q_T) \big\}.
\end{align*}

The embedding $W^{1,2,p}\big(Q_T \big)\hookrightarrow C^{0,\frac{\beta}{2}}\big([0,T],C^{0,\beta}(\bar \O) \big)$ (which is valid for $p>\frac{n+2}{2}$ and $0<\beta<2-\frac{n+2}{p}$, see, {\it{e.g.}}, \cite[Theorem 1.4.1]{Wu2006}) implies the existence of $\beta\in (0,1)$ such that the sequence $(\tilde f_{n,\eps})_{n\in\N}$ is uniformly bounded in the space $C^{0,\frac{\beta}{2}}\big([0,T],C^{0,\beta}\big(\bar \O \big)\big)$. 
As a result, from the definition of $F_n$, we see that $F_n$
is bounded in $C^{0,\frac{\beta}{2}}\big([0,T],C^{0,\beta}(\bar \O)\big)$ independently of $n$, therefore, by using Schauder estimates, the sequence $(\tilde f_{n,\eps})_{n\in \N}$ is uniformly bounded in $C^{1,\frac{\beta}{2}}((0,T),C^{2,\beta}(\O)\cap C^{0,\beta}(\bar \O) )$ for each $T>0$. 
By a diagonal extraction procedure, using the compact embedding $C^{1,\frac{\beta'}{2}}((0,T),C^{2,\beta'}(\O)\cap C^{0,\beta'}(\bar \O) )\hookrightarrow C^{1,\frac{\beta}{2}}((0,T),C^{2,\beta}(\O)\cap C^{0,\beta}(\bar \O) )$, for any $\beta'<\beta$ there exists a subsequence $(\tilde f_{n_k,\eps})_{k\in\N}$ which converges to a non-negative function $\tilde f_\eps\in C^{1,\frac{\beta'}{2}}(\R_+^*,C^{2,\beta'}(\O)\cap C^{0,\beta'}(\bar \O)) \cap C(\R_+,C^{2,\beta'}(\O)\cap C^{0,\beta'}(\bar \O))$ that is a solution to \eqref{eq-red:bis} considering $f_{0,\eps}$ as the initial condition. As a byproduct, we have $\tilde f_\eps\le e^{C_0t}v_\eps.$ % for all $(t,z)\in \R_+^* \times \Omega$.  
As the parameter $\eps>0$ was arbitrary in the above construction, we have obtained a solution $\tilde f_\eps$ to \eqref{eq-red:bis} with initial condition $f_{0,\eps}$ for any $\eps>0$. 
\smallskip
\\
\noindent  {\it{Step 2: Proof of the existence of a solution to \eqref{eq-red:bis} with the initial  condition $f_0$}}. 
\smallskip
\\
For this purpose, let $(\eps_n)$ be such that $\eps_n \downarrow 0$ and consider the sequence of functions $(\tilde f_{\eps_n})$. 
 We next show that this sequence is relatively compact in a suitable Banach space and that, up to a subsequence, it converges to a desired solution. 

First, we claim that for each $T>0$, $\tilde f_\eps$ is bounded in the space $C^{1,\frac{\beta}{2}}((0,T],C^{2,\beta}(\O)\cap C^{0,\beta}(\bar \O))$ independently of $\eps$, for some $0<\beta<1$. 
Going back to  \eqref{main}, we see that $s_\eps(t)$ satisfies
\[\dot s_\eps(t)=-\int_{\O}\mu(s_\eps(t),z)f_\eps(t,z)\,dz + u(t)(s_{in}-s_{\eps}(t)),\]
for $t\ge 0$. Note that for all $t\ge 0$, one has $s_\eps(t)\ge 0$ for all $\eps>0$.  This follows using that $s_\eps(0)=s_0>0$, $f_\eps\ge 0$, and Hypotheses  
\ref{hyp0} and \ref{hyp1}. 
% . %  we deduce that  .
As a consequence, we deduce that 
$$ \forall t\geq 0, \; 0\le m_\eps(t)\le  M(t)+s_{in} \le m(0)+s_0+s_{in}.$$
By integrating  \eqref{eq-red} over $\O$, we find that for all $t>0$, one has:
$$|\dot m_\eps(t)|=\left|\int_{\O}\mu( M(t)+s_{in} -m_\eps(t),z)f_\eps(t,z)\,dz- u(t)m_\eps(t)\right|\le (C_0+u(t))m_\eps(t). $$
Therefore, $m_\eps$ is uniformly Lipchitz-continuous over $(0,T)$. 
From this fact, combined to \Cref{hyp1} and the fact that the function $M$ is uniformly Lipchitz-continuous independently of $\eps$, we thus deduce that the following quantity $c_\eps$, defined by 
$$
c_\eps(t,z):=\mu\left(M(t)+s_{in}-\frac{M(t)}{M(0)}\tilde m_{\eps}(t),z\right) \; \text{ for }\; (t,z)\in \R_+\times \Omega,
$$
is uniformly bounded and Lipschitz-continous in time and space, independently of $\eps$. 
Now, $\Omega$ being a smooth domain, the Neumann heat kernel generates a strongly continuous semigroup of contractions on $L^1(\O)$ (see \cite{Brezis2010,Pazy1983}),  thus,  there exists $C>0$ such that:
$$
\forall (t,z)\in \R_+^*\times \Omega, \; \; |v_\eps(t,z)|\le Ct^{-\frac{d}{2}}\|f_{0,\eps}\|_1 \le Ct^{-\frac{d}{2}}\|f_{0}\|_1. 
$$
As a result,  for any $\delta>0$ we can find a constant $C(\delta)$ such that for all $t\ge \delta$ and all $z\in \Omega$, one has $v_\eps(t,z)\le C(\delta)$. 
The preceding property implies that for all $\delta>0$ and all $T>\delta$, $(\tilde f_\eps)$ is uniformly bounded independently of $\eps$ in the set $Q_T^\delta=(\delta,T)\times \O$ and that $c_\eps\tilde f_{\eps}$
is also uniformly bounded in $Q_T^\delta$ independently of $\eps$. By using standard parabolic $L^p$ estimates (\cite[9.2.2]{Wu2006}), it follows that for all $T>\delta$  and all $p\ge 1$, $(\tilde f_\eps)$ is  bounded independently of $\eps$ in   $W^{1,2,p}\big(Q_T^\delta \big)\cap W^{1,1,p}_{0}\big(Q_T^\delta \big)$. 
By using the embedding $W^{1,2,p}\big(Q_T^{\delta} \big)\hookrightarrow C^{0,\frac{\beta}{2}}\big( [\delta,T],C^{0,\beta}(\bar \O ) \big)$ for $p>\frac{n+2}{2}$ and $0<\beta<2-\frac{n+2}{p}$ (see \cite[Theorem 1.4.1]{Wu2006}) together with Schauder estimates, we conclude that $(\tilde f_\eps)$ is bounded in $C^{1,\frac{\beta}{2}}\big([\delta,T],C^{2,\beta}(\O)\cap C^{0,\beta}(\overline{\O}) \big)$ for some $\beta\in (0,1)$. This ends up the proof of our claim.  
 \smallskip
 
 Finally, let us consider a sequence of positive numbers  $(\delta_n)$ such that $\delta_n \downarrow 0$ as $n\rightarrow +\infty$. By using a diagonal extraction procedure and the  claim, there exist $\beta'\in (0,\beta)$ and a subsequence $(\tilde f_{\eps_{n_k}})_{k\in\N}$ which converges to a smooth nonnegative solution 
 $\tilde f$ of \eqref{eq-red:bis} over $Q_T$, with initial condition $f_0$ and such that 
 $$\tilde f \in C^{1,\frac{\beta'}{2}}\big((0,T), C^{2,\beta'}(\O)\cap C^{0,\beta'}(\bar \O) \big)\cap C([0,T], L^1(\O)).$$  
Since $T$ is arbitrary, we conclude that $\tilde f \in C^{1,\frac{\beta'}{2}}\big(\R_+^*, C^{2,\beta'}(\O)\cap C^{0,\beta'}(\bar \O) \big)\cap C(\R_+, L^1(\O))$.
\end{proof}
\section{Stabilization by auxostat-type controls}\label{sec2}
In this section, we address asymptotical properties of \eqref{main} 
when the control $u$ is an auxostat-type control given by:
\begin{align*}
   & \text{(i)}\quad u(t)= \frac{1}{s_{in} -s(t)}\int_{\O}\mu(s(t),z)f(t,z)\,dz, 
    &&\text{(ii)} \quad u(t)= \frac{1}{s_{in} -\sigma}\int_{\O}\mu(s(t),z)f(t,z)\,dz,\\
    &\text{(iii)}\quad u(t)= \frac{1}{s_{in} -s(t)}\int_{\O}\mu(\sigma,z)f(t,z)\,dz,  
    &&\text{(iv)}\quad u(t)= \frac{1}{s_{in} -\sigma}\int_{\O}\mu(\sigma,z)f(t,z)\,dz,
    \end{align*}
where $\sigma\in (0,s_{in})$ is given. Additionally, from now on, 
the initial data is taken in the set 
\begin{equation}{\label{def-D}}
\D:=(0,s_{in})\times \left\{f_0 \in L^{1}(\O)\; | \; f_0\ge 0 \; \mathrm{and} \;  \int_{\O}f_0(z)\,dz >0\right\}.
\end{equation}
\subsection{Control of the form (i)}\label{sec2-1}
In this part, we prove \Cref{th:stabi-i}. To this end, observe that the control defined in $(i)$ gives $\dot{s}=0$ and therefore,  $s(t)= s_0$ for all time $t \geq 0$. It follows that  
$$\forall t\geq 0, \; \; u(t)= \frac{1}{s_{in} -s_0}\int_{\O}\mu(s_0,z)f(t,z)\,dz,$$
and, consequently, $f$ satisfies :
\begin{equation*}
\begin{dcases}
\partial_t f(t,z)=\left(\mu(s_0,z)- \frac{1}{s_{in}-s_0}\int_{\O}\mu(s_0,z')f(t,z')\,dz'\right)f(t,z) +\alpha \Delta_{z} f(t,z) & \text{for } t>0, z\in\O,\\
\partial_{\vec{n}} f(t,z)=0 & \text{for }t>0, x\in\partial\O,\\
f(0,z)=f_0(z)\ge 0 & \text{for } z\in\O.
\end{dcases}
\end{equation*}
The function $m(t):=\int_{\Omega} f(t,z) \, dz$ then satisfies: 
$$
\dot m(t)= \left(1-\frac{m(t)}{s_{in}-s_0}\right)\int_{\O}\mu(s_0,z)f(t,z)\,dz
$$
over $\R_+^*$. 
For initial conditions $f_0\in\D$, a quick analysis  shows that  $\ds{m(t) \to s_{in} -s_0}$ as $t\to +\infty$.
On the other hand, thanks to \cite{Coville2012a,Leman2015,Mirrahimi2012a}, we know that  for any $1 \leq p<+\infty$, $f(t,x)\to (s_{in}-s_0) \frac{\varphi_{s_0}}{\|\varphi_{s_0}\|_1}$ in $L^p(\O)$ as $t\rightarrow +\infty$, 
where $\varphi_{s_0}$ is a positive eigenfunction  associated with the first eigenvalue
$\lambda_1(\alpha \Delta + \mu(s_0,z))$ and Neumann boundary condition. 
More precisely, $\varphi_{s_0}$ satisfies 
\begin{equation*}
\begin{dcases}
    \alpha \Delta \varphi_{s_0} +\mu(s_0,z)\varphi_{s_0} + \lambda_1 \varphi_{s_0} =0 \quad &\text{ for all  } z\in\O, \\
\partial_{\vec{n}} \varphi(z)=0 \quad &\text{ for all  } z\in\partial\O.
\end{dcases}
\end{equation*}
This ends up the proof of \Cref{th:stabi-i}.  
\begin{remark}
In case (i), the solution $f$ can be made explicit: $$f(t,z)=\frac{\Psi(t,z) w(t,z)}{1+\int_{0}^t \int_{\O}\frac{\mu(s_0,z)}{s_{in}-s_0} \Psi(s,z)w(s,z)\,dzds},
$$
where $\Psi:=e^{-\lambda_1 t}\varphi_{s_0}$ and $w(t,z)=e^{t\l }\left[\frac{f_0(z)}{\varphi_{s_0}}(z)\right]$ for $(t,z)\in\R_+\times \Omega$. 
Here $e^{t\l}$ stands for the Neumann diffusion semi-group generated by the elliptic operator $\l[w]:= \frac{\alpha}{\varphi_{s_0}^2}  \nabla \cdot \left(\varphi_{s_0}^2\nabla w\right)$.
\end{remark}
\subsection{Controls of the form (ii), (iii) and (iv)} \label{sec2-2}
The goal now is to prove Theorem \ref{th:stabi-other}. 
We start by proving the following lemma.
\begin{lemma}\label{lem:autoxat-s-et-m}
 Let $\sigma\in(0,s_{in})$ and $\mu$ satisfying Hypotheses \ref{hyp0}-\ref{hyp1}.  Assume further that at least one of the following assumption holds:
 \begin{itemize}
     \item $ u $ is given by (ii) and for all $s>0$, there is $c_0(s)>0$ such that
     $$\inf_{z\in\O}\mu(s,z)\ge c_0(s)>0,$$ 
     \item $ u $ is given by (iii) and for all $s\geq 0$, there is $c_1(s)>0$ such that 
     $$\inf_{z\in \O}\partial_s \mu(s,z)\ge c_1(s)>0,$$ 
     \item $ u $ is given by (iv) and for all $z\in\O,$ $\mu(\cdot,z) \in C^{2}(\R_+)$ and is strictly concave in $s$, {\it{i.e.}},
     $$\partial_{ss}\mu(s,z)<0 \quad \mathrm{for}\ \mathrm{all} \quad (s,z)\in [0,s_{in}]\times \O.$$
 \end{itemize}
 Let $(s_0,f_0)\in\D$. Then,  the unique solution $(s,f)$ of \eqref{main} with the corresponding control satisfies
 $$\lim_{t\rightarrow +\infty}s(t)= \sigma \; \; \mathrm{and} \; \;  \lim_{t\rightarrow +\infty}m(t)= s_{in}-\sigma.$$
 \end{lemma}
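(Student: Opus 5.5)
The plan is to reduce everything to two scalar ODEs, for $s$ and for $m(t)=\int_\O f(t,z)\,dz$, and to use the conserved-type quantity $M=s-s_{in}+m$. Recall from \eqref{eq:edo-M} that $\dot M=-uM$, so $M(t)=M(0)e^{-\int_0^t u}$. Consequently, once we know $\int_0^{+\infty}u=+\infty$ and $s(t)\to\sigma$, we get $M(t)\to0$ and hence $m(t)=s_{in}-s(t)+M(t)\to s_{in}-\sigma$. The proof therefore splits into three parts:
\begin{itemize}
\item[(a)] a sign and monotonicity analysis of $\dot s$ in each of the three cases;
\item[(b)] a uniform positive lower bound on $m(t)$ for large $t$, which yields both a quantitative decay of $|s-\sigma|$ and $\int_0^\infty u=\infty$;
\item[(c)] the conclusion.
\end{itemize}

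\textbf{Step (a).} Substituting each control into the $s$-equation of \eqref{main} gives the following.

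In case (ii),
$$\dot s=\frac{\sigma-s}{s_{in}-\sigma}\int_\O\mu(s,z)f\,dz.$$

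In case (iii),
$$\dot s=\int_\O\big(\mu(\sigma,z)-\mu(s,z)\big)f\,dz,$$
and $\partial_s\mu\ge c_1>0$ gives $|\mu(\sigma,z)-\mu(s,z)|\ge c|s-\sigma|$, with the sign of $\sigma-s$.

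In case (iv),
$$\dot s=\int_\O g(s,z)f\,dz,\qquad g(s,z):=\mu(\sigma,z)\frac{s_{in}-s}{s_{in}-\sigma}-\mu(s,z).$$
Here $g(\cdot,z)$ is strictly convex, $g(\sigma,z)=0$ and $g(s_{in},z)=-\mu(s_{in},z)<0$. Convexity then forces $g<0$ on $(\sigma,s_{in}]$ and $g>0$ on $[0,\sigma)$. Comparing with chords, one gets $|g(s,z)|\ge c|s-\sigma|$ for a constant $c>0$ uniform in $z$. This uses that $\mu(s_{in},\cdot)$ and the derivatives $\partial_s\mu$ are bounded away from $0$ and $\infty$ on the compact set $[0,s_{in}]\times\bar\O$, by Hypothesis~\ref{hyp0} and continuity.

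In all three cases $\dot s$ has the sign of $\sigma-s$. Hence $s$ is monotone, stays in the compact interval $I$ between $s_0$ and $\sigma$ (so $I\subset(0,s_{in})$), and
$$\frac{d}{dt}|s-\sigma|\le -c\,m(t)\,|s-\sigma|.$$
In case (ii) this last inequality uses $\inf_{I\times\bar\O}\mu>0$.

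\textbf{Step (b).} This is the main obstacle, since $m$ could a priori decay to zero, which would stall both $s$ and $\int u$. Since $s(t)\in I$, which is compact and bounded away from $0$, Hypothesis~\ref{hyp0} and continuity give constants $0<c\le C$ with $c\le\mu(s(t),z)\le C$ and $\mu(\sigma,z)\le C$. Moreover, in all cases $u(t)\le C'm(t)$, because the denominators $s_{in}-s(t)$ and $s_{in}-\sigma$ are bounded below. Integrating the $f$-equation over $\O$ (Neumann condition) then gives
$$\dot m=\int_\O\mu(s,z)f\,dz-u\,m\ \ge\ m\,(c-C'm),$$
so $m(t)\ge\min\big(m(0),c/C'\big)=:\underline m>0$ for all $t$. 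In case (ii) this is even explicit: $\dot m=(1-\tfrac{m}{s_{in}-\sigma})\int_\O\mu f\,dz$.

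\textbf{Step (c).} From the lower bound on $m$, the differential inequality in Step (a) yields
$$|s(t)-\sigma|\le|s_0-\sigma|e^{-c\underline m t}\to0.$$
Also, $u(t)\ge c''m(t)\ge c''\underline m>0$, using $\inf_{\bar\O}\mu(\sigma,\cdot)>0$ or $\inf_{I\times\bar\O}\mu>0$ according to the case. Thus $\int_0^\infty u=\infty$ and $M(t)\to0$, which gives $m(t)=s_{in}-s(t)+M(t)\to s_{in}-\sigma$.

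The points I expect to need care are the uniformity in $z$ of the constants in Step (a). This is especially so in case (iv), where the lower bound $|g(s,z)|\ge c|s-\sigma|$ must be extracted from strict concavity together with compactness of $[0,s_{in}]\times\bar\O$. One also has to check that Hypothesis~\ref{hyp0} gives positivity of $\mu$ on $\bar\O$, so that the infima above are genuinely positive.
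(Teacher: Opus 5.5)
Your proof is correct, and it is organized more uniformly than the paper's. The paper handles the three controls separately:
\begin{itemize}
\item in case (ii) it reads off invariant two-sided bounds on $s$ and $m$ from their explicit logistic-type equations;
\item in case (iii) it writes $\dot s=(\sigma-s)\iint\partial_s\mu\,f$ and obtains upper and lower bounds on $m$ through differential inequalities, split according to $s_0>\sigma$ or $s_0<\sigma$;
\item in case (iv) it first proves $M(t)\to 0$ by contradiction: if $\int_0^\infty u<\infty$ then $\int_\O\mu(\sigma,z)f\,dz\to0$, hence $m\to0$, which clashes with a logistic lower bound once $s\to s^*>0$. It then uses $\partial_s\rho>0$ for $\rho=\mu/(s_{in}-s)$ to get a monotone limit $s\to\bar s$, and identifies $\bar s=\sigma$ through an eventual differential inequality for $s$.
\end{itemize}
You instead isolate one a priori estimate valid in all three cases, $m(t)\ge\min\big(m(0),c/C'\big)$, which comes from $u\le C'm$ and $\mu(s(t),\cdot)\ge c$ on the trapping interval $I$. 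Together with $|\dot s|\ge c\,m\,|s-\sigma|$, it gives at once exponential decay of $|s-\sigma|$ and $u\ge c''\underline m$, hence $\int_0^\infty u=\infty$ and $M\to0$.

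This buys several things. It removes the contradiction argument, including the step ``integrable, hence tends to zero'', which in the paper tacitly needs a uniform-continuity argument. It also removes the monotone-limit identification, needs no upper bound on $m$, and yields a convergence rate. Your sign analysis of $g$ is equivalent to the paper's monotonicity of $\rho$, since $g(s,z)=(s_{in}-s)\big(\rho(\sigma,z)-\rho(s,z)\big)$. What the paper's case-by-case route provides in exchange is explicit two-sided bounds on $m$ in cases (ii)--(iii), which the lemma itself does not require.

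One correction to your justification in case (iv). Under concavity alone, $\partial_s\mu$ need not be bounded away from $0$ on $[0,s_{in}]\times\bar\O$, because $\mu(\cdot,z)$ may attain its maximum inside $(0,s_{in})$; so that part of your claim is false. It is also unnecessary. The three-slope inequality for the convex function $g(\cdot,z)$, with $g(\sigma,z)=0$ and $g(s_{in},z)=-\mu(s_{in},z)$, gives $|g(s,z)|\ge\frac{\mu(s_{in},z)}{s_{in}-\sigma}|s-\sigma|$ for every $s\in[0,s_{in}]$. So only $\min_{\bar\O}\mu(s_{in},\cdot)>0$ is used, and the strictness of the concavity plays no role.

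Likewise, in case (iii) the hypothesis only gives a pointwise constant $c_1(s)$. The uniform constant on $I$ comes from the continuity of $s\mapsto\inf_{z}\partial_s\mu(s,z)$, which follows from $\mu\in C^1_{loc}(\R_+,C^1(\O))$.
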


%\begin{remark}
%\end{remark} 
% \begin{remark}
%   Thanks to the proof of this result, we will see that the condition  $(s_0,f_0)\in \D$ can be relaxed in case $(ii)$ or $(iv)$: indeed,  the above lemma then remains true if $f_0\ge0, f_0\in L^1(\O)$.
% \end{remark}

\begin{remark} 
The preceding conditions are satisfied if $\mu$ is of Monod type. Additionally, thanks to the behavior of $s$ and the uniform bounds on the $L^1$-norm of $f$, one can  show that the above auxostat-type controls are admissible if $u_{max}$ is chosen large enough.
\end{remark}
 
\begin{proof}
We treat the three cases separately and, for convenience,  we start by (ii). 
\smallskip

\paragraph{\bf Case where $u$ is given by $(ii)$}  
In that case,  for all $(t,z)\in \R_+^*\times \Omega$, \eqref{main} rewrites   
\begin{equation*}\label{eq:autoxat-ii}
    \begin{dcases}
        \partial_t f(t,z)=\left(\mu(s(t),z) -\frac{1}{s_{in}-\sigma}\int_{\O}\mu(s(t),z')f(t,z')\;dz'\right)f(t,z) +\alpha\Delta f(t,z)& t>0, \ z\in\O,\\
        \dot s(t)=-\int_{\O}\mu(s(t),z)f(t,z)\,dz + \frac{s_{in}-s(t)}{s_{in} -\sigma} \int_{\O}\mu(s(t),z)f(t,z)\,dz & t>0,\\
        \partial_{\vec{n}} f(t,z)=0 &t>0, \ z\in\partial\O,\\
s(0)= s_{0}>0,\quad f(0,z)=f_0(z)\ge 0 \quad & z\in\O.
    \end{dcases}
\end{equation*}
By rearranging the terms of the second expression and integrating the first one over $\O$, we get:
% and rearranging the terms in the second we have 
   \[\begin{dcases}
       \dot s(t)= \left( -1+ \frac{s_{in}-s(t)}{s_{in} -\sigma}\right)\int_{\O} \mu(s(t),z)f(t,z)\,dz & t>0,\\
   \dot m(t)= \int_{\O}\mu(s(t),z)f(t,z)\,dz\left(1-\frac{m(t)}{s_{in}-\sigma}\right) & t>0.
   \end{dcases}
   \]
   
   Since $(s_0,f_0)\in\D$, the quantities $\mu(s(t),z)$ and $f(t,z)$ remain   positive. 
   Using the ODE\footnote{ordinary differential equation} satisfied by $s$, we have
   $\dot s(t)>0$ when  $s(t)< \sigma$ and $\dot s(t)<0$ when $s(t)> \sigma$. Moreover, if there is $t_0\ge0$ such that $s(t_0)=\sigma$, then, by Cauchy-Lipschitz's Theorem, we have $s(t)=\sigma$ for all $t\ge t_0$.
   Similarly, we find that $\dot m(t)>0$ when $m(t)<s_{in}-\sigma$  and that $\dot{m}(t)<0$ when  $m(t)>s_{in}-\sigma$. From this, we deduce the following inequalities:
    $$\max(\sigma,s_0)\ge s(t)\ge \min(\sigma,s_0)\quad \text{and} \quad \max(s_{in}-\sigma,m(0))\ge m(t)\ge \min(s_{in}-\sigma,s_0)$$
    for all $t\geq 0$. 
   By continuity of $\inf_{\O}\mu(s,\cdot)$ with respect to $s$ and from the previous estimates on $s$, we deduce that there exist constants $C_2\geq 0$ and $C_3\geq 0$ such that  
   $$\forall t \geq 0, \; \; \forall z\in \Omega, \; \; 0\leq C_2\le \mu(s(t),z)< C_3.$$ 
   %and  thus $\ds{\int_{\O}\mu(s(t),z)f(t,z)\,dz\sim m(t)}$ which stays bounded above and below and stay away from zero.  
   Moreover, we have $C_2>0$ since $s_0>0$.  
Going back to the ODEs satisfied respectively by $s$ and $m$, we conclude that $s(t)\to \sigma$ and  $m(t)\to s_{in}-\sigma$ as $t\rightarrow +\infty$. 
\medskip

\paragraph{\bf Case where $u$ is given by $(iii)$.}
In that case,  for all $t>0$ and all $z\in\O$, \eqref{main} rewrites 
\begin{equation*}\label{eq:autoxat-iii}
    \begin{dcases}
        \partial_t f(t,z)=\left(\mu(s(t),z) -\frac{1}{s_{in}-s(t)}\int_{\O}\mu(\sigma,z')f(t,z')\;dz'\right)f(t,z) +\alpha\Delta f(t,z)& t>0, z\in\O,\\
        \dot s(t)=-\int_{\O}\mu(s(t),z)f(t,z)\,dz + \frac{s_{in}-s(t)}{s_{in} -s(t)} \int_{\O}\mu(\sigma,z)f(t,z)\,dz& t>0,\\
        \partial_{\vec{n}} f(t,z)=0 &t>0, \; z\in\partial\O,\\
        s(0)= s_{0}>0,\quad f(0,z)=f_0(z)\ge 0& z\in\O.
    \end{dcases}
\end{equation*}
By rearranging the terms of the second expression, we find that
$$
\dot s(t)= \int_{\O}[\mu(\sigma,z)-\mu(s(t),z)]f(t,z)\,dz =(\sigma -s(t))\iint_{\tilde K} \partial_s\mu(\tau \sigma + (1-\tau)s(t),z)f(t,z)\, d\tau dz,
$$
where $\tilde K:=\Omega \times [0,1]$. 
Since $\partial_s\mu(\tau ,z)>0$ and $f>0$, we deduce as previously that 
$\dot{s}(t)>0$ when  $s(t)< \sigma$ and that $\dot{s}(t)<0$ when $s(t)>\sigma$. Moreover, if there is $t_0\ge0$ such that $s(t_0)=\sigma$, and thus we have 
$s(t)=\sigma$ for all $t\ge t_0$.
As a consequence, for all $t\ge 0,$ $$ \max(\sigma,s_0)\ge s(t)\ge \min(\sigma,s_0).$$
To conclude, it remains to establish upper and lower bounds for the above double integral  
away from zero. Integrating over $\O$ the equation satisfied by $f$ yields
\begin{equation}\label{lem:autoxat-iii}
 \dot m(t)= \int_{\O}\mu(s(t),z)f(t,z)\,dz-\frac{m(t)}{s_{in}-s(t)}\int_{\O}\mu(\sigma,f(t,z))\,dz, \quad t\geq 0. 
\end{equation}
Now assume that $s_0>\sigma$.  %$s_0\ge s(t)>\sigma$ for all $t\ge 0$, 
It follows that $s$ is decreasing and since $\mu$ is increasing in $s$, we  obtain the following differential inequalities:
\[
\begin{dcases}
    \dot m(t)\ge \int_{\O}\mu(\sigma,z)f(t,z)\,dz\left(1  -\frac{m(t)}{s_{in}-s_0}\right),\\
    \dot m(t)\le m(t)\left( \sup_{z\in\O}\mu(s_0,z)  - \frac{\inf_{z\in \O}\mu(\sigma,z)}{s_{in}-\sigma} m(t)\right),
\end{dcases}
\]from which we deduce that for all $t\ge 0,$ 
$$ \max\left( m(0), \frac{\sup_{z\in\O}\mu(s_0,z)}{\inf_{z\in\O}\mu(\sigma,z)}(s_{in}-\sigma)\right)\ge  m(t)\ge \min\left(m(0),(s_{in}-s_0)\right).$$ 
Likewise, when $s_0<\sigma $,  
$s$ is increasing and from \eqref{lem:autoxat-iii}, we obtain for all $t\geq 0$ 
\[
\begin{dcases}
    \dot m(t)\ge \int_{\O}\mu(\sigma,z)f(t,z)\,dz\left(\inf_{z\in\O}\mu(s_0,z) -\sup_{z\in\O}\mu(\sigma,z)\frac{m(t)}{s_{in}-\sigma}\right),\\
    \dot m(t)\le \int_{\O}\mu(\sigma,z)f(t,z)\,dz \left( 1   - \frac{m(t)}{s_{in}-s_0} \right),
\end{dcases}
\]from which we deduce that for all $t\ge 0$ 
$$ \max\left( m(0), s_{in}-s_0\right)\ge m(t)\ge \min\left(m(0),\frac{\inf_{z\in\O}\mu(s_0,z)}{\sup_{z\in\O}\mu(\sigma,z)}(s_{in}-\sigma)\right).
$$
From the lower and upper bounds on $s(t)$ and since for all $\tau \in [0,1]$, 
$\partial_s \mu(\tau,z)>C_1(\tau)$ (uniformly in $z$),
there exist positive constants $C_4$ and $C_5$ such that 
%for all $\tau\in[0,1]$ and $t\ge0$ 
$$\forall \tau \in [0,1], \; \forall t \geq 0, \; \; \forall z\in \Omega, \; \; C_4\le \partial_s\mu(\tau\sigma+(1-\tau)s(t),z)\le C_5.$$ 
Combining with the lower and upper bounds on $m$ yields

\[\iint_{\tilde K} \partial_s\mu(\tau \sigma + (1-\tau)s(t),z)f(t,z)\, d\tau dz \le C_5\max\left(m(0), (s_{in}-s_0), \frac{\sup_{z\in\O}\mu(s_0,z)}{\inf_{z\in\O}\mu(\sigma,z)}(s_{in}-\sigma)\right)\]
     and
     \[   \iint_{\tilde K} \partial_s\mu(\tau \sigma + (1-\tau)s(t),z)f(t,z)\, d\tau dz   \ge C_4\min\left\{m(0), s_{in}-s_0, \frac{\inf_{z\in\O}\mu(s_0,z)}{\sup_{z\in\O}\mu(\sigma,z)}(s_{in}-\sigma)\right\}.\]
We can then conclude as in the previous case  that $s(t)\to\sigma$ and $m(t)\to s_{in}-\sigma$ as $t\rightarrow +\infty$. 
\medskip
 
\paragraph{\bf Case where $u$ is given by $(iv)$.}
In that case,  for all $t>0$ and all $z\in\O$, \eqref{main} rewrites

\begin{equation}\label{eq:autoxat-iv}
\begin{dcases}
\partial_t f(t,z)=\left(\mu(s(t),z)- \frac{1}{s_{in} -\sigma}\int_{\O}\mu(\sigma,z')f(t,z')\,dz'\right)f(t,z) +\alpha \Delta_{z} f(t,z)&t>0, z\in\O, \\
\dot s(t)=-\int_{\O}\mu(s(t),z')f(t,z')\,dz' +\frac{1}{s_{in} -\sigma}\int_{\O}\mu(\sigma,z)f(t,z)\,dz(s_{in}-s(t))& t>0,\\
\partial_{\vec{n}} f(t,z)=0 & t>0, z\in\partial\O,\\
s(0)= s_{0}>0,\quad f(0,z)=f_0(z)\ge 0 & z\in\O.
\end{dcases}
\end{equation} 
Recall that $M(t)=s(t)-s_{in}+m(t)$ fulfills $ M'(t)=-u(t)M(t)$ so that $$M(t)=M(0)e^{-\int_0^t u(\tau)d\tau}, \quad t\geq 0.$$ 
\begin{claim}
One has $M(t)\to 0$ as $t\rightarrow +\infty$. 
\end{claim}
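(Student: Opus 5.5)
Since $M(t)=M(0)e^{-\int_0^t u(\tau)\,d\tau}$, the claim is equivalent to showing that $\int_0^{+\infty}u(\tau)\,d\tau=+\infty$ (if $M(0)=0$ there is nothing to prove). With the control (iv), $u(t)=\frac{1}{s_{in}-\sigma}\int_\O\mu(\sigma,z)f(t,z)\,dz\ge \frac{c_\sigma}{s_{in}-\sigma}\,m(t)$, where $c_\sigma:=\inf_{z\in\O}\mu(\sigma,z)$. This quantity is positive because $\mu(\sigma,\cdot)$ is continuous and positive on $\bar\O$ by \Cref{hyp0}. So the plan is to prove that $m(t)$ does not decay too fast; a uniform positive lower bound $m(t)\ge \underline m>0$ for all $t\ge 0$ would suffice, since then $u(t)\ge c>0$ and $M(t)\le |M(0)|e^{-ct}\to0$.

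To get this lower bound, I would argue by contradiction: suppose $\int_0^{+\infty}u<+\infty$. Then $M(t)\to M_\infty:=M(0)e^{-\int_0^\infty u}$, which has the same sign as $M(0)$ and is nonzero. Moreover $\int_0^\infty m(t)\,dt<+\infty$, since $u\ge c\,m$. Because $\dot m$ is bounded (indeed $|\dot m|\le (C_0+u)m$ and $m$ is bounded by $m(0)+s_0+s_{in}$ as in Section~\ref{sec1}), $m$ is uniformly Lipschitz and integrable, hence $m(t)\to0$. Then $s(t)=M(t)+s_{in}-m(t)\to M_\infty+s_{in}$. Note that $s\ge0$ and $s$ stays bounded.

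Now I look at the growth rate of $m$:
\[
\dot m=\int_\O\bigl(\mu(s,z)-u\bigr)f\,dz\ \ge\ \Bigl(\inf_{z}\mu(s(t),z)-u(t)\Bigr)m(t).
\]
Since $u(t)\le \frac{\upsilon}{s_{in}-\sigma}m(t)\to0$, and $s(t)\to s_\infty:=M_\infty+s_{in}$, the bracket is eventually $\ge \frac12\inf_z\mu(s_\infty,z)>0$ as long as $s_\infty>0$. In that case $m$ grows exponentially for large $t$, contradicting $m\to0$. So the contradiction works whenever $s_\infty>0$. The case $s_\infty=0$ would require $M_\infty=-s_{in}$, i.e. $M(0)<-s_{in}$, which is impossible because $M(0)=s_0-s_{in}+m(0)>-s_{in}$ and $|M_\infty|\le|M(0)|$ with the same sign. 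This exclusion, which keeps $s$ away from $0$ near infinity, is the main delicate point. I expect it to close with the above sign argument, but I would double-check that $s_\infty=M_\infty+s_{in}>0$ holds strictly in both cases $M(0)>0$ and $M(0)<0$. It does: if $M(0)\ge0$ then $s_\infty\ge s_{in}$, and if $M(0)<0$ then $s_\infty\ge s_{in}+M(0)=s_0+m(0)>0$.

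Hence $\int_0^\infty u=+\infty$, and therefore $M(t)\to0$. The concavity assumption in case (iv) is not needed for this claim. I would expect it to enter only in the subsequent step, when showing that $s\to\sigma$ using the reduced dynamics with $M\equiv0$ asymptotically.
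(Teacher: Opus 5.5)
Your proof is correct and follows essentially the same route as the paper: assume $\int_0^{\infty}u<\infty$, deduce $m(t)\to 0$ and $s(t)\to s^*=s_{in}+M_\infty>0$, then get a positive growth bound on $m$ for large times, which contradicts $m\to 0$. The differences are minor. You justify $m\to0$ from uniform Lipschitz continuity plus integrability, a step the paper passes over. You also use $u(t)\to 0$ and continuity of $s\mapsto\inf_z\mu(s,z)$, where the paper uses a logistic-type inequality relying on monotonicity of $\mu$ in $s$.
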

\begin{proof}
    It is enough to show that $\int_0^t u(\tau)d\tau\to +\infty$ as $t\rightarrow +\infty$. Suppose by contradiction that this is not the case. 
    As a result, the map
 $t\mapsto \int_{\O}\mu(\sigma,z)f(t,z)\,dz$ belongs to $L^1(\R_+)$, 
therefore  $\int_{\O}\mu(\sigma,z)f(t,z)\,dz\to 0$ as $t\to +\infty$. 
Recall that $$\sup_{z\in \O}\mu(\sigma,z)\ge \mu(\sigma,z)\ge \min_{z\in \O}\mu(\sigma,z)>0,$$  
which gives $m(t)\rightarrow 0$ as $t\rightarrow +\infty$. 
Now, observe that 
$$
\forall t \geq 0, \; \; s(t)= s_{in} -m(t) +(s(0)-s_{in}+m(0))e^{-\int_{0}^{t}u(\tau)\,d\tau},
$$
hence, $s(t)$ admits a limit $s^*>0$ as $t\rightarrow +\infty$ and we have 
$$
s^*=s_{in}\left(1 -e^{-\int_{0}^{+\infty}u(\tau)\,d\tau}\right) +(s(0)+m(0))e^{-\int_{0}^{+\infty}u(\tau)\,d\tau}. 
$$
Obviously, there is $t_0\geq 0$ such that for all $t\ge t_0$, one has 
$s(t)\ge s^*/2$ and thus $\mu(s(t),z)\ge \mu(s^*/2,z)$. % for all $t\ge t_0$. 
By integrating over $\O$ the equation satisfied by $f$ in \eqref{eq:autoxat-iv}, we obtain

\begin{align*}
\dot{m}(t)&= \int_{\O}\mu(s(t),z)f(t,z)\,dz -u(t)m(t)\\
&\ge \int_{\O}\mu(s^*/2,z)f(t,z)\,dz -\frac{1}{s_{in}-\sigma}\left(\sup_{z\in \O}\mu(\sigma,z)\right)m^2(t),\\
&\ge (\inf_{z\in \O}\mu(s^*/2,z)) m(t) -\frac{1}{s_{in}-\sigma}\left(\sup_{z\in \O}\mu(\sigma,z)\right)m^2(t) 
\end{align*}
for all $t\geq t_0$. 
$$m(t)\ge C:= \min\left\{m(t_0),\frac{s_{in}-\sigma}{2}\frac{\inf_{z\in \O}\mu(s^*/2,z)}{\sup_{z\in \O}\mu(\sigma,z)}\right\}>0 
$$
%\quad \text{ for all} \quad t\ge t_0,$$ 
for all $t\geq t_0$. This contradicts the fact that $m(t)\to 0$. 
\end{proof}
From now on we follow the argumentation used in \cite{Bayen2022}, that is, we rewrite the ODE satisfied by $s$ in the following way:
\begin{align*}
\dot{s}(t)&= -\int_{\O}\mu(s(t),z')f(t,z')\,dz' +\frac{1}{s_{in} -\sigma}\int_{\O}\mu(\sigma,z')f(t,z')\,dz(s_{in}-s(t))\\
& =(s_{in}-s(t))\left(\int_{\O} \left[\frac{\mu(\sigma,z)}{s_{in} -\sigma}-\frac{\mu(s(t),z')}{(s_{in}-s(t))}\right]f(t,z')\,dz'\right)\\
& =(s_{in}-s(t))(\sigma -s(t))\left(\int_{\O} \left[\int_{0}^1 \partial_s \rho(s(t)+\xi(\sigma -s(t)),z )\,d\xi\right]f(t,z')\,dz'\right)
\end{align*}
where $\ds{\rho(s,z):=\frac{\mu(s,z)}{s_{in}-s}}$.
Note that $\partial_s \rho(s,z)= \frac{\partial_s \mu(s,z)}{s_{in}-s}+ \frac{\mu(s,z)}{(s_{in}-s)^2}> 0$ for all $(s,z)\in  (0,s_{in})\times \O$ if $\mu(\cdot,z)$ is strictly concave and smooth. Indeed, under this assumption, %since $\mu$ is concave 
a Taylor expansion gives $$\mu(s_{in},z)=\mu(s,z) +\partial_s\mu(s,z)(s_{in} -s)+ \frac{1}{2}\int_{s}^{s_{in}}(s_{in}-\tau)^2\partial_{ss}\mu(\tau,z) \,d\tau$$
and therefore $$\mu(s_{in},z) -  \frac{1}{2}\int_{s}^{s_{in}}(s_{in}-\tau)^2\partial_{ss}\mu(\tau,z) \,d\tau=(s_{in}-s)^2\partial_s\rho(s,z)>0.$$
{\it{First case}}. If there is $t_0\ge0$ such that  $s(t_0)=\sigma$ then, $s(t)=\sigma$ for all $t\ge t_0$ and, obviously, $s(t)\to \sigma$ as $t\to +\infty$. 
Since $M(t)\to0,$ we deduce that $ m(t)\to s_{in}-\sigma$ as $t\to +\infty$ in this case. 
\smallskip
\\
{\it{Second case}}. Suppose  that $s_{in}>s_0>\sigma$. From \eqref{eq:autoxat-iv},  $s$ is decreasing and since $s(t)\ge  \sigma$ for all $t\geq 0$, $s$ converges to some value $\bar s \in [\sigma,s_{in})$ as $t\rightarrow +\infty$. Using that $M(t)\to0$, we have $m(t)\to s_{in}-\bar s>0$ as $t\rightarrow +\infty$. Therefore  there exists  $t_1$ such that for all $t\ge t_1$ , $2(s_{in}-\bar s) \ge m(t)\ge \frac{s_{in}-\bar s}{2}$. As a consequence, there are positive constants $C_6,C_7$ and $t_3\geq 0$ such that for all $t\ge t_3$, one has
$$0\le C_6(s_{in}-s(t))(\sigma -s(t))\le \dot s(t) \le C_7 (s_{in}-s(t)).(\sigma -s(t)). $$
These lower and upper bounds on $s$ show that $s(t)\to \sigma$ as $t\to +\infty$ and thus $m(t)\to s_{in}-\sigma$.
\smallskip
\\
{\it{Third case}}. The last case where $s_{in}>s_0>\sigma$ in analogous to the second one. 
\end{proof}

We now derive the stationary equation satisfied by the limiting population.
For this purpose, let $\varphi_\sigma$ be a positive eigenfunction associated to the principal eigenvalue $\lambda_1$ of the spectral problem 
 \begin{equation}\label{eq:autoxat-spec}
\begin{dcases}
\alpha \Delta_{z} \varphi(z)  +\mu(\sigma,z)\varphi =-\lambda\varphi\quad &\text{ for }  z\in\O,\\
\partial_{\vec{n}} \varphi(z)=0 \quad  &\text{ for  } x\in\partial\O.
\end{dcases}
\end{equation} 
Having constructed this function, we now show the following existence result. 
\begin{lemma}\label{lem:autoxat-equi}
Let $\sigma\in(0,s_{in})$ and $\mu$ satisfying Hypotheses \ref{hyp0}-\ref{hyp1}.  Assume further that at least one of the following assumption holds:
 \begin{itemize}
     \item $ u $ is given by (ii) and for all $s>0$, there is $c_0(s)>0$ such that
     $$\inf_{z\in\O}\mu(s,z)\ge c_0(s),$$ 
     \item $ u $ is given by (iii) and for all $s \geq 0$, there is $c_1(s)>0$ such that 
     $$\inf_{z\in \O}\partial_s \mu(s,z)\ge c_1(s),$$ 
     \item $ u $ is given by (iv) and for all $z\in\O,$ $\mu(\cdot,z) \in C^{2}(\R_+)$ and is strictly concave in $s$, {\it{i.e.}}, 
     $$\partial_{ss}\mu(s,z)<0 \quad \mathrm{for \ all} \quad (s,z)\in [0,s_{in}]\times \O.$$
 \end{itemize}
Then, there exists a unique bounded positive stationary solution $(\bar f, \bar s)$ to  \eqref{main} associated with the corresponding control.  
Moreover, one has $\bar s=\sigma$,  and $\bar f =(s_{in}-\sigma)\frac{\varphi_\sigma}{\|\varphi_\sigma\|_{1}}$.
\end{lemma}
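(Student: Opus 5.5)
The plan has two parts. First, any stationary solution is forced to have $\bar s=\sigma$ and total mass $s_{in}-\sigma$. Second, the spatial equation for $\bar f$ then reduces to the principal eigenvalue problem \eqref{eq:autoxat-spec}, whose positive solutions are unique up to normalization.

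Let $(\bar s,\bar f)$ be a bounded positive stationary solution with $\bar f\geq 0$, $\bar f\not\equiv 0$, and let $\bar u$ be the corresponding constant value of the feedback. Integrating the $f$-equation over $\O$ and using the Neumann condition gives $\int_\O \mu(\bar s,z)\bar f\,dz=\bar u\,\bar m$, where $\bar m=\|\bar f\|_1>0$. The $s$-equation gives $\int_\O\mu(\bar s,z)\bar f\,dz=\bar u(s_{in}-\bar s)$. The left-hand side is positive because $\mu(\bar s,\cdot)>0$ once $\bar s>0$. The case $\bar s=0$ must be excluded separately: it would force $\bar u s_{in}=0$, hence $\bar u=0$, and in cases (iii)–(iv) $\bar u$ is proportional to $\int\mu(\sigma,z)\bar f>0$, a contradiction. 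In case (ii), $\bar s=0$ is compatible with $\bar u=0$ and the $f$-equation reduces to $\alpha\Delta\bar f=0$, so $\bar f$ is a positive constant. I expect this spurious family to be the main obstacle. I would handle it by reading ``positive stationary solution'' as one with $\bar s>0$, which is consistent with the initial data in $\D$, and say so explicitly. With $\bar u>0$, combining the two identities yields $\bar m=s_{in}-\bar s$.

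Next I identify $\bar s$ case by case. In case (ii), $\bar u(s_{in}-\sigma)=\int\mu(\bar s,z)\bar f=\bar u(s_{in}-\bar s)$, so $\bar s=\sigma$. In case (iii), the $s$-equation reduces to $\int[\mu(\sigma,z)-\mu(\bar s,z)]\bar f\,dz=0$. Writing this as $(\sigma-\bar s)\iint\partial_s\mu\,\bar f=0$ and using $\partial_s\mu\ge c_1>0$ with $\bar f\ge0$, $\bar f\not\equiv0$, gives $\bar s=\sigma$. In case (iv), I use the factorization from the proof of Lemma \ref{lem:autoxat-s-et-m}: $0=(s_{in}-\bar s)(\sigma-\bar s)\int\!\int_0^1\partial_s\rho\,d\xi\,\bar f$, where $\partial_s\rho>0$ by strict concavity. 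Since $\bar s<s_{in}$ (because $\bar m>0$), this gives $\bar s=\sigma$.

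Once $\bar s=\sigma$, the $f$-equation reads $\alpha\Delta\bar f+\mu(\sigma,z)\bar f=\bar u\bar f$ with Neumann boundary conditions. So $\bar f$ is a nonnegative, nontrivial eigenfunction of \eqref{eq:autoxat-spec} with eigenvalue $-\bar u$. By the Krein–Rutman theorem, the only eigenvalue admitting a nonnegative eigenfunction is the principal one, which is simple. Hence $\bar u=-\lambda_1$ and $\bar f=c\varphi_\sigma$, with $c=(s_{in}-\sigma)/\|\varphi_\sigma\|_1$ fixed by the mass identity. Elliptic regularity (the regularity of $\mu(\sigma,\cdot)$ from Hypothesis \ref{hyp0}) ensures $\bar f$ is a classical solution, and the strong maximum principle gives $\bar f>0$, so Krein–Rutman applies.

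For existence, I take $\bar s=\sigma$ and $\bar f=(s_{in}-\sigma)\varphi_\sigma/\|\varphi_\sigma\|_1$. In each of the three cases the feedback is then $\bar u=\int\mu(\sigma,z)\bar f/(s_{in}-\sigma)$. Integrating \eqref{eq:autoxat-spec} gives $\int\mu(\sigma,z)\varphi_\sigma=-\lambda_1\|\varphi_\sigma\|_1$, so $\bar u=-\lambda_1$, which makes $\bar f$ stationary. The $s$-equation then holds since $\int\mu\bar f=\bar u(s_{in}-\sigma)$; this also shows $\lambda_1<0$, as required for $\bar u>0$.
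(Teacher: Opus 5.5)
Your proof is correct and follows the same route as the paper, which writes out only case (iv). Both check that $(\sigma,(s_{in}-\sigma)\varphi_\sigma/\|\varphi_\sigma\|_1)$ is stationary, use the $s$-equation with the monotonicity of $\mu$ (of $\rho=\mu/(s_{in}-s)$ in case (iv)) to force $\bar s=\sigma$, identify $\bar f$ as a positive principal eigenfunction, and fix its multiple by integration. Your remark that in case (ii) every pair $(0,c)$ with $c>0$ constant is also a bounded stationary solution is accurate, so uniqueness there does require reading ``positive'' as including $\bar s>0$, a point the paper's ``the other cases being similar'' passes over.
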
 
 \begin{proof}
 
 We only prove the result in $(iv)$, the other cases being similar. 
 Let $(\bar s,\bar f)$ be a stationary solution to \eqref{eq:autoxat-iv} with $\bar f\in L^{\infty}$ and $\bar f> 0$, that is,  
 \begin{equation}\label{eq:autoxat-iv-sta}
\begin{dcases}
\alpha \Delta_{z} \bar f(z)+\left(\mu(\bar s,z)- \frac{1}{s_{in} -\sigma}\int_{\O}\mu(\sigma,z')\bar f(z')\,dz'\right)\bar f(z) =0 &  \text{ for } z\in\O,\\
-\int_{\O}\mu(\bar s,z')\bar f(z')\,dz' +\frac{1}{s_{in} -\sigma}\int_{\O}\mu(\sigma,z')\bar f(z')\,dz(s_{in}-\bar s)=0 \\
\partial_{\vec{n}} \bar f(z)=0 & \text{ for } z\in\partial\O.
\end{dcases}
\end{equation} 
Let us consider the couple  $(\sigma, (s_{in}-\sigma)\frac{\varphi_\sigma}{\|\varphi_{\sigma}\|_{1}} )$, then a simple computation shows that such couple  is a  solution to \eqref{eq:autoxat-iv-sta}. 
Let us now prove that it is the only one. Assume that $(\bar s,\bar f )$ is another positive stationary solution.  
Since $\mu$ satisfies \Cref{hyp0},  from standard elliptic regularity we deduce that $\bar f \in C^{2,\alpha}(\O)$. 
The second equation in \eqref{eq:autoxat-iv-sta} now implies that  
 $$
 \int_{\O}\bar f(z)\left[\frac{\mu(\bar s, z)}{s_{in} -\bar s} - \frac{\mu(\sigma, z)}{s_{in} -\sigma} \right]\,dz =0. 
 $$
Since $\bar f>0$ in $\O$ and $\mu(\cdot,z)$ is strictly concave for all $z$, the quantity $\left[\frac{\mu(\bar s, z)}{s_{in} -\bar s} - \frac{\mu(\sigma, z)}{s_{in} -\sigma} \right]$ has a constant sign. From the above equation we deduce that for every $z\in \Omega$,
 $$
\left[\frac{\mu(\bar s, z)}{s_{in} -\bar s} - \frac{\mu(\sigma, z)}{s_{in} -\sigma} \right]=0, 
 $$
 hence, $\bar s =\sigma$. 
 Carrying this information over  the equation for $\bar f$ shows that $\bar f$ is a positive  stationary solution
 to 
 \begin{equation*}
\begin{dcases}
\alpha \Delta_{z} \bar f(z)+\left(\mu(\sigma,z)- \frac{1}{s_{in} -\sigma}\int_{\O}\mu(\sigma,z')\bar f(z')\,dz'\right)\bar f(z) =0 \quad &\text{ for  }  z\in\O,\\
\partial_{\vec{n}} \bar f(z)=0 \quad &\text{ for  } z\in\partial\O.
\end{dcases}
\end{equation*} 
 This means exactly that $\bar f$ is a positive eigenfunction to \eqref{eq:autoxat-spec}. 
 From the properties of the eigenpair $(\lambda_1,\varphi_\sigma)$ (see \cite{Courant2024}), 
we deduce that there is $\theta>0$ such that $\bar f=\theta \varphi_\sigma$ with
 $\lambda_1:=-\frac{1}{s_{in}-\sigma}\int_{\O}\mu(\sigma,z)\bar f(z)\,dz$.
This gives $$\theta=\frac{-\lambda_1 (s_{in}-\sigma)}{\int_{\O}\mu(\sigma,z)\varphi_\sigma(z)\,dz}.$$
To conclude the proof, we integrate \eqref{eq:autoxat-spec} over $\O$ to obtain  
$$-\lambda_1= \frac{\int_{\O}\mu(\sigma,z)\varphi_\sigma(z)\,dz}{\int_{\O}\varphi_\sigma } \quad \mathrm{and} \quad \bar f =\frac{(s_{in}-\sigma)}{\|\varphi_{\sigma}\|_1}\varphi_\sigma .$$
 \end{proof}
 
To end up the proof of \Cref{th:stabi-other}, we need to prove convergence of solutions (associated with an auxostat-type control) to the unique solution of the corresponding stationary equation. 
\begin{lemma}{\label{lem34}}
    Let $\sigma\in(0,s_{in})$ and $\mu$ satisfying Hypotheses  \ref{hyp0}-\ref{hyp1}.  Assume further that one the following assumption holds:
 \begin{itemize}
     \item $ u $ is given by (ii) and for all $s>0$, there is $c_0(s)>0$ such that
     $$\inf_{z\in\O}\mu(s,z)\ge c_0(s),$$ 
     \item $ u $ is given by (iii) and for all $s \geq 0$, there is $c_1(s)>0$ such that 
     $$\inf_{z\in \O}\partial_s \mu(s,z)\ge c_1(s),$$ 
     \item $ u $ is given by (iv) and for all $z\in\O,$ $\mu(\cdot,z) \in C^{2}(\R_+)$ and is concave in $s$, \textit{i.e.},
     $$\partial_{ss}\mu(s,z)<0 \quad \text{for all} \quad (s,z)\in [0,s_{in}]\times \O.$$
 \end{itemize}
Then, for every $(s_0,f_0)\in \D$, the unique solution  $(s,f)$ to \eqref{main},  associated with the corresponding control, converges in $L^2$ to  $(\sigma,(s_{in}-\sigma)\frac{\varphi_\sigma}{\|\varphi_\sigma\|_{1}})$ as $t\to +\infty$. 
\end{lemma}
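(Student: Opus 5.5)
We reduce to an asymptotically autonomous linear problem. By Lemma \ref{lem:autoxat-s-et-m}, in each of the three cases we already know $s(t)\to\sigma$ and $m(t)\to s_{in}-\sigma$. So the only remaining task is convergence of the \emph{shape} of $f$. Set
$$
a(t,z):=\mu(s(t),z)-u(t),
$$
so that $f$ solves the linear Neumann problem $\partial_t f=\alpha\Delta f+a(t,z)f$. Since $s(t)\to\sigma$ and $\mu$ is locally Lipschitz in $s$ (Hypothesis \ref{hyp1}), we have $\mu(s(t),\cdot)\to\mu(\sigma,\cdot)$ uniformly on $\O$. The control $u(t)$ also converges: in each of the cases (ii), (iii), (iv), $u(t)$ is a continuous functional of $s(t)$ and $\int_\O\mu(\cdot,z)f\,dz$.

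\textbf{Step 1: uniform compactness of the trajectory.} We will use the smoothing estimates from the proof of Theorem \ref{thm:existence1}. The $L^1$ bound on $f$ (from $m$ bounded), the heat-kernel bound $t^{-d/2}$ and the parabolic $L^p$/Schauder estimates on windows $[t-1,t+1]$ give a bound on $f(t,\cdot)$ in $C^{0,\beta}(\bar\O)$ that is uniform for $t\ge1$. Hence $\{f(t,\cdot)\}_{t\ge1}$ is relatively compact in $C(\bar\O)$, and in particular in $L^2$.

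\textbf{Step 2: normalize and compare with the principal eigenfunction.} Write $g(t,z):=f(t,z)/m(t)$. Then $g$ satisfies $\partial_t g=\alpha\Delta g+(\mu(s(t),z)-\int_\O\mu(s(t),z')g\,dz')g$, which no longer involves $u$. Next, introduce $p:=f/\varphi_\sigma$ and use the Lyapunov/relative-entropy argument of \cite{Leman2015,Mirrahimi2012a} (generalized relative entropy, or equivalently $L^2(\varphi_\sigma^2dz)$ weighted norms). For the frozen operator $\alpha\Delta+\mu(\sigma,\cdot)$ with Neumann conditions, $\lambda_1$ is simple with spectral gap $\gamma>0$, so the linear semigroup satisfies
$$
\|e^{t(\alpha\Delta+\mu(\sigma,\cdot)+\lambda_1)}h-P h\|_2\le Ce^{-\gamma t}\|h\|_2,
$$
where $P$ is the projection onto $\varphi_\sigma$. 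We then treat $\mu(s(t),z)-\mu(\sigma,z)$ as a perturbation with $\|\cdot\|_\infty=\eta(t)\to0$. Consider the Duhamel formula starting at a large time $T$ for the linear equation with the scalar term $u$ factored out (scalar factors do not change the shape). It shows that
$$
\Big\|\frac{f(t)}{\|f(t)\|_1}-\frac{\varphi_\sigma}{\|\varphi_\sigma\|_1}\Big\|_2\le C e^{-\gamma(t-T)}+C\sup_{\tau\ge T}\eta(\tau).
$$
Choosing $T$ large and then $t\to\infty$ gives convergence of the normalized profile.

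\textbf{Step 3: conclude.} Multiply by $m(t)\to s_{in}-\sigma$ to obtain $f(t)\to(s_{in}-\sigma)\varphi_\sigma/\|\varphi_\sigma\|_1$ in $L^2$, which together with $s(t)\to\sigma$ is the claim. Alternatively, any $\omega$-limit point given by Step 1 is, by standard asymptotically-autonomous arguments, an entire nonnegative solution of the limit problem with total mass $s_{in}-\sigma$. Lemma \ref{lem:autoxat-equi} (uniqueness of the stationary state) together with the Lyapunov functional then forces that limit point to be $\bar f$.

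The main obstacle is Step 2: controlling the perturbation uniformly. The Duhamel remainder is $\int_T^t e^{-\gamma(t-\tau)}\eta(\tau)\|g(\tau)\|_2\,d\tau$, so it requires a uniform $L^2$ bound on $g$ for large times. That bound must come from Step 1 together with the lower bound $m(t)\ge c>0$ available from Lemma \ref{lem:autoxat-s-et-m}. Handling the nonlinear normalization term also needs care: it is scalar, so the simplest approach is to first solve the linear equation without it and then normalize.
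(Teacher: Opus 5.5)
Your proposal is correct, but it argues differently from the paper.

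\textbf{The paper's route.} The paper treats only case (iv). It splits $f=\lambda(t)\varphi_\sigma+h$ with $h\perp\varphi_\sigma$ in $L^2$ and runs an \emph{energy estimate} on $\|h\|_2^2$. The key ingredients are the identity $\int_\Omega f\,(\Delta f-f\Delta\varphi_\sigma/\varphi_\sigma)\,dz=-\int_\Omega\varphi_\sigma^2|\nabla(f/\varphi_\sigma)|^2\,dz$ and the spectral gap on $\varphi_\sigma^\perp$. In the resulting differential inequality, the nonlocal scalar term is rewritten as $\dot\lambda/\lambda-a/\lambda$, so integrating only produces the factor $(\lambda(t)/\lambda(\tau))^2\le(K_1/K_0)^2$. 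The limit $\lambda(t)\to\theta$ is then read off from the ODE for $\lambda$.

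\textbf{Your route.} You remove the scalar part first, working with $\tilde w:=e^{\int_0^t(u(\tau)+\lambda_1)\,d\tau}f$. This solves the linear problem $\partial_t\tilde w=(\alpha\Delta+\mu(\sigma,\cdot)+\lambda_1)\tilde w+(\mu(s(t),\cdot)-\mu(\sigma,\cdot))\tilde w$. You then apply Duhamel around the frozen semigroup and normalize at the end.

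\textbf{The step needing care.} It is the one you flagged: after dividing by the normalization at time $t$, the Duhamel integrand involves the ratio of normalizations at times $\tau<t$. This is cleanest with the weight $\langle\tilde w,\varphi_\sigma\rangle$, which is the paper's $\lambda$ up to the scalar factor. Since $\tilde w\ge0$ and $\alpha\Delta+\mu(\sigma,\cdot)+\lambda_1$ is self-adjoint with kernel spanned by $\varphi_\sigma$, one has $|\frac{d}{dt}\langle\tilde w,\varphi_\sigma\rangle|\le\eta(t)\langle\tilde w,\varphi_\sigma\rangle$. Hence the ratio is at most $e^{\bar\eta(t-\tau)}$ with $\bar\eta:=\sup_{\tau\ge T}\eta(\tau)$. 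The factor $e^{-\gamma(t-\tau)}$ absorbs this once $\bar\eta<\gamma$, so your displayed bound holds with $\gamma$ replaced by $\gamma-\bar\eta$. This is exactly the counterpart of the paper's $(\lambda(t)/\lambda(\tau))^2$. Your $L^1$ normalization is equivalent up to the constants $c_\sigma, C_\sigma$.

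\textbf{What each route buys.} The energy method needs only $\|h\|_1$ bounded and $K_0\le\lambda\le K_1$, with no uniform-in-time regularity. Yours needs a uniform bound on $\|f(\tau)\|_2/m(\tau)$ for $\tau\ge1$, but that is cheap. Since $u\ge0$ and $\mu\le\upsilon$, one has $f(\tau+1)\le e^{\upsilon}S(1)f(\tau)$ with $S$ the Neumann heat semigroup. Then $L^1\to L^\infty$ smoothing plus $m\ge c>0$ suffice, so the H\"older compactness of your Step 1 is not really required. In return, your argument:
\begin{itemize}
\item treats (ii)--(iv) simultaneously as a linear perturbation problem;
\item gives an explicit estimate in terms of $\sup_{\tau\ge T}\eta$;
\item concludes more directly, by multiplying the converging profile by $m(t)\to s_{in}-\sigma$ instead of analyzing the ODE for $\lambda$.
\end{itemize}
The $\omega$-limit alternative in your Step 3 is unnecessary. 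As written, it would still require exhibiting a Lyapunov functional for the limit problem.
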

\begin{proof}
Again, we only prove the result in case $(iv)$, the other 
cases being similar. 
Recall that  $\varphi_\sigma$ is a solution to \eqref{eq:autoxat-spec}. 
Without loss of generality, we suppose that $\varphi_\sigma$ is 
normalized, {\it{i.e.}}, $\|\varphi_\sigma\|_2=1$.
Our aim is to prove that 
\begin{equation*}
    \lim_{t\to+\infty}\|f(t,\cdot)-\bar f(\cdot)\|_{2}=0.
\end{equation*}
Thanks to \Cref{lem:autoxat-equi}, $\bar f =\theta \varphi_\sigma$ with $\theta=\frac{s_{in}-\sigma}{\|\varphi_\sigma\|_1}$. To prove the lemma, it is enough to show that:
\begin{equation}\label{eq:goal-stabi}
   \lim_{t\to +\infty} \|f(t,\cdot)-\theta \varphi_\sigma(\cdot)\|_{2}=0. 
\end{equation}
To prove \eqref{eq:goal-stabi}, we decompose $L^2(\Omega)$ 
using the standard orthogonal decomposition associated with the closed subspace spanned by $\varphi_\sigma$ (see \cite{Rudin1987}): 
$$
L^2(\O)=\text{span}(\varphi_\sigma) \oplus \varphi_\sigma^{\perp},
$$
where $\varphi_\sigma^{\perp}$ is the orthogonal complement of 
$\text{span}(\varphi_\sigma)$ with respect to the usual $L^2$ product.  
From \Cref{th:existence-closeloop},  $f\in C^{1}((0,+\infty),C^{2,\beta}(\O)\cap C^{0,\beta}(\bar \O))\cap C^{0}([0,+\infty),L^1(\O))$,  hence, for all $t>0$, $f(t,\cdot)\in L^2(\O)$. Therefore,  for all $t>0$, there are $\lambda(t)\in \R$ and $h(t,\cdot)\in L^2(\Omega)$ such that 
\begin{equation}\label{eq:decomposition}
\forall z\in \Omega, \; \; f(t,z)=\lambda(t)\varphi_\sigma(z)+h(t,z),
\end{equation}
together with the orthogonality constraint $\int_\Omega \varphi_\sigma(z) h(t,z) \, dz=0$.
Multiplying \eqref{eq:decomposition} by $\varphi_\sigma$ yields %\eqref{eq:decomposition}: %, we have
\begin{equation}\label{eq:lambda}
\lambda(t) = \int_\O \varphi_\sigma(z)f(t,z)\,dz, \quad t\geq 0. 
\end{equation}
From the orthogonal decomposition, we also have
\begin{equation}\label{eq:fl2}
\int_{\O}f^2(t,z)\,dz = 
\lambda^2(t) + \int_{\O}h^2(t,z)\,dz, \quad t\geq 0. 
\end{equation}
Now, recall that $f$ is of class $C^1$ w.r.t.~$t$. Hence, 
by using the Lebesgue derivation theorem in \eqref{eq:lambda}, 
we deduce that $\lambda$ is of class $C^1$ as well.  Therefore, by using the decomposition \eqref{eq:decomposition}, the function $h$ is $C^1$ in time.  Similarly, $\varphi_\sigma$ being in $C^2(\O)$,   \eqref{eq:decomposition} implies  that $h(t,\cdot)$ is with values in  $C^2(\O)$. 
%\textcolor{red}{arrete ici}
Substituting the previous decomposition into \eqref{eq:autoxat-iv} yields 
\begin{equation} \label{eq:dot-decomp}
\dot{\lambda}(t)\varphi_\sigma+\partial_th(t,z)= \left(\mu(s(t),z)- \frac{1}{s_{in} -\sigma}\int_{\O}\mu(\sigma,z')f(t,z')\,dz'\right)f(t,z)\\ +\alpha \Delta_{z}f(t,z), 
\end{equation}
for all $(t,z)\in \R_+^* \times \Omega$. 
Using that 
$$\int_{\O} \partial_t h(t,z)\varphi_\sigma(z)\,dz=\partial_t \int_{\O}h(t,z)\varphi_\sigma(z)\,dz=0,$$ 
and multiplying \eqref{eq:dot-decomp} by $\varphi_\sigma$ and integrating over $\O$, we deduce that for $t>0$, 
$$
\dot{\lambda}(t)=\int_{\O}\left[\left(\mu(s(t),z)- \frac{1}{s_{in} -\sigma}\int_{\O}\mu(\sigma,z')\big[\lambda(t)\varphi_\sigma+ h(t,z')\big]\,dz'\right)f(t,z) +\alpha \Delta_{z}f(t,z)\right] \varphi_\sigma(z) \,dz.
$$
On the other hand, as $\alpha\Delta_z\varphi_\sigma(z)+\mu(\sigma,z)\varphi_\sigma(z)+\lambda_1\varphi_\sigma(z)=0$ over $\Omega$, we also have for $t>0$,
$$
\int_\O \alpha\Delta_z f(t,z)\varphi_\sigma(z)\,dz = \int_\O f(t,z)\alpha \Delta_z\varphi_\sigma(z)\,dz = -\int_\O f(t,z)\big(\mu(\sigma,z)\varphi_\sigma(z)+\lambda_1\varphi_\sigma(z)\big)\,dz.
$$
As a consequence, we find that for $t>0$, 
\begin{align*}
    &\dot\lambda(t)  = \int_\O \mu(s(t),z)\varphi_\sigma(z)f(t,z)\, dz-\frac{1}{s_{in}-\sigma}\int_\O \mu(\sigma,z)\big( \lambda(t)\varphi_\sigma(z)+h(t,z)\big)\,dz\int_\O f(t,z)\varphi_\sigma(z)\,dz \\
    &\hspace{8cm}-\int_\O f(t,z)\big(\mu(\sigma,z)\varphi_\sigma(z)+\lambda_1\varphi_\sigma(z)\big)\,dz \\
    & = \int_\O \big(\mu(s(t),z)-\mu(\sigma,z)\big)\varphi_\sigma(z)f(t,z)\, dz-\frac{\lambda(t)}{s_{in}-\sigma}\int_\O \mu(\sigma,z)\big( \lambda(t)\varphi_\sigma(z)+h(t,z)\big)\,dz -\lambda_1\lambda(t).
\end{align*}
From the proof of \Cref{lem:autoxat-equi}, we have $\lambda_1=\frac{-\theta}{s_{in}-\sigma}\int_{\O}\mu(\sigma,z) \varphi_\sigma(z)\,dz$ with $\theta>0$. For $t>0$, let $$a(t):=\int_\O \big(\mu(s(t),z)-\mu(\sigma,z)\big)\varphi_\sigma(z)f(t,z)\, dz,$$
 so that for $t>0$, $\dot{\lambda}(t)$ becomes:
    \begin{align*}
  \dot \lambda(t)  &= a(t)+\lambda(t)\left(-\lambda_1- \frac{1}{s_{in}-\sigma}\int_\O \mu(\sigma,z)\big( \lambda(t)\varphi_\sigma(z)+h(t,z)\big)\,dz\right)
     \\
    &= a(t)+\lambda(t)\left(\frac{\theta}{s_{in}-\sigma}\int_\O \mu(\sigma,z)\varphi_\sigma(z)\,dz-\frac{1}{s_{in}-\sigma}\int_\O \mu(\sigma,z)\big( \lambda(t)\varphi_\sigma(z)+h(t,z)\big)\,dz\right)
    \\
    &= a(t)+ \left(\frac{1}{s_{in}-\sigma}\int_\O \mu(\sigma,z)\varphi_\sigma(z)\,dz\right) \big[\lambda(t)(\theta-\lambda(t))\big]\, -\lambda(t)\int_{\O}\mu(\sigma,z)h(t,z)\,dz.
\end{align*}
For $t>0$, let us set $b(t):= \frac{1}{\gamma_0}\int_{\O}\mu(\sigma,z)h(t,z)\,dz$ where 
$\gamma_0:=\frac{1}{s_{in}-\sigma}\int_\O \mu(\sigma,z)\varphi_\sigma(z)\,dz$.  One has:
\begin{equation}\label{eq:dot-lambda}
    \dot \lambda= a(t)+ \gamma_0\lambda(t)\left(\theta - \lambda(t)-b(t)\right).
\end{equation}
Because $\varphi_\sigma>0$ solves \eqref{eq:autoxat-spec}, the strong maximum principle and Hopf’s lemma guarantee  existence of constants $0<c_\sigma<C_\sigma$ such that $c_\sigma\le \varphi_{\sigma}\le C_\sigma$ 
(see \cite{Wu2006}).
Therefore, \eqref{eq:lambda} yields
$$\forall t \geq 0, \; \; c_\sigma m(t)\le  \lambda(t) \le C_\sigma m(t). $$%=\int_{\O}f(t,z)\varphi_\sigma(z)\,dz 
Thanks to  \Cref{lem:autoxat-s-et-m}, $m(t)\to s_{in}-\sigma$ as $t\rightarrow +\infty$, therefore, we deduce that for all $t\geq 0$
\begin{equation}\label{eq:lambda-bound}
K_0\le \lambda(t)\le K_1,
\end{equation}
for some constant $K_1>K_0>0$. 
On the other hand, observe that for every $t\geq 0$, one has 
$$|a(t)|\le \|\mu(s(t),\cdot)-\mu(\sigma,\cdot)\|_{\infty} \int_{\O}\varphi_\sigma(z) f(t,z)\,dz.$$ 
Since $\lim_{t\rightarrow +\infty} s(t)= \sigma$, and in view of \eqref{eq:lambda-bound} together with Hypotheses \ref{hyp0}-\ref{hyp1} 
on $\mu$, 
  one has
$$\lim_{t\to +\infty}a(t)= 0.$$
In order to show that $\lim_{t\to +\infty}\lambda(t)= \theta$ (based on \eqref{eq:dot-lambda}), let us prove that $b(t)\to 0$ as $t\to +\infty$. 
For this purpose, we shall prove that the $L^2$-norm of $h$ in the trait variable tends to zero. 
\begin{claim}
    $\lim_{t\to +\infty}\|h\|_{2}(t)=0.$
\end{claim}
Suppose an instant that the claim is true. By definition of $b(t)$ and by using  the Cauchy-Schwartz inequality, we get 
$$|b(t)|\lesssim \left|\int_{\O}\mu(\sigma,z)h(t,z)\,dz\right| \lesssim \|h\|_{2}(t)\to 0 \; \; \mathrm{as}\; \; t\rightarrow +\infty.$$
By~\eqref{eq:dot-lambda}, $\lim_{t\rightarrow +\infty} 
\lambda(t)= \theta$, thus \eqref{eq:goal-stabi} holds true which concludes the proof of Lemma~\ref{lem34}. 
\end{proof}
Theorem \ref{th:stabi-other} then follows directly from Lemma \ref{lem:autoxat-equi} and Lemma \ref{lem34}. Note that convergence provided in the Lemma \ref{lem34} is in $L^2$. But, this is enough to obtain convergence in any $L^p$ space, $p\geq 1$, and complete the proof of \Cref{th:stabi-other}. Indeed:
\smallskip
\\
$\bullet$ if $p\in [1,2]$, H\"older's inequality  implies $\|f(t,\cdot)-\bar f(\cdot)\|_p\le C_p\|f(t,\cdot)-\bar f(\cdot)\|_2$ where $C_p\geq 0$. 
\\
$\bullet$ If $p>2$,  note that $\bar f \in L^{\infty}$, that
$f\in C^1(\R^*_+,C^{2,\beta}(\O)\cap C^{0,\beta}(\bar \O))$ satisfies \eqref{main} and that 
$t\mapsto \|f(t,\cdot)\|_1$ is uniformly bounded over $\R_+$. Thanks to Sobolev's embeddings, we get that $t\mapsto\|f(t,\cdot)\|_{\infty}$ is uniformly bounded over $\R_+$.  The convergence in $L^p$ then follows from the inequality $\|f(t,\cdot)-\bar f(\cdot)\|_p\le \|f(t,\cdot)-\bar f(\cdot) \|_{\infty}^{p-2}\|f(t,\cdot)-\bar f(\cdot)\|_2$.
\smallskip

To end-up the proof of Lemma \ref{lem34}, it remains to show the claim. 
\begin{proof}[Proof of the Claim] 
Let us first observe that $t\mapsto \|h\|_1(t)$ is uniformly bounded. 
Indeed, from the decomposition of $f$ given by \eqref{eq:decomposition}, 
we have  
$$\forall (t,z)\in \R_+\times \Omega, \; \; |h(t,z)|\le f(t,z) +\lambda(t)\varphi_\sigma(z).$$ 
 Thanks to the Cauchy-Schwartz inequality, we also have  
 $\int_{\O}\varphi_\sigma(z)\,dz \lesssim \|\varphi_\sigma\|_{2}=1$, 
 therefore,  
 $$
\forall t\geq 0, \; \;   \|h\|_1(t) = \int_{\O} |h(t,z)|\,dz \lesssim m(t) +\lambda(t)<+\infty.
 $$
Our aim now is to find a differential inequality satisfied by $\|h\|_2$ to ensure its convergence to $0$. For this purpose, 
recall that $\varphi_\sigma>0$. Using \eqref{eq:autoxat-spec}, we deduce that:
\begin{equation}{\label{tmp-ter1}} \mu(\sigma,z)= -\alpha \frac{\Delta \varphi_\sigma(z)}{\varphi_\sigma(z)} +\frac{\theta}{s_{in}-\sigma}\int_{\O}\mu(\sigma,z')\varphi_\sigma(z')\,dz'\quad \text{ for all } \quad z\in\O. \end{equation}
Therefore, \eqref{eq:autoxat-iv} can be rewritten
\[
\partial_tf(t,z)=[\mu(s(t),z) -\mu(\sigma,z)]f(t,z)+ \left[\mu(\sigma,z) -\frac{1}{s_{in}-\sigma}\int_{\O}\mu(\sigma,z')f(t,z')\,dz'\right]f(t,z) +\alpha \Delta f(t,z),\]
where $(t,z)\in \R_+^* \times \Omega$. 
For the proof, define the auxiliary function
$$ \Gamma(t):=\frac{1}{s_{in} -\sigma}\int_{\O}\mu(\sigma,z')\left[f(t,z') - \theta\varphi_\sigma(z')\right] \,dz',$$
for $t\geq 0$. The equation satisfied by $f$ over $\R_+^* \times \Omega$ becomes (recall \eqref{tmp-ter1}):
\begin{equation}\label{eq:reformulate}
  \partial_tf(t,z)  = [\mu(s(t),z) -\mu(\sigma,z)]f(t,z) - \Gamma(t)f(t,z) +\alpha \Delta f(t,z) -\alpha\frac{f(t,z)}{\varphi_\sigma(z)}\Delta \varphi_\sigma(z).
\end{equation}
Multiplying the previous equality by $f(t,z)$ and integrating over $\O$ yields
\begin{multline*}
\frac{1}{2}\frac{d}{dt}\int_{\O}f^2(t,z)\,dz=  -\Gamma(t)\int_{\O}f^2(t,z)\,dz +\int_{\O}\alpha f(t,z)\left( \Delta f(t,z) -\frac{f(t,z)}{\varphi_\sigma(z)}\Delta \varphi_\sigma(z)\right)\,dz \\
+\int_{\O}(\mu(s(t),z) -\mu(\sigma,z))f^2(t,z)\,dz.
\end{multline*}
Ingration by parts gives for all $t\geq 0$ (time is omitted):
{\small{\[\mathcal{E}(f):=\int_{\O}f(t,z)\left( \Delta f(t,z) -\frac{f(t,z)}{\varphi_\sigma(z)}\Delta \varphi_\sigma(z)\right)\,dz =-\int_\O  \left| \nabla f(t,z)\right|^2\,dz +\int_\O \nabla\left(\frac{f^2(t,z)}{\varphi_\sigma(z)}\right)\cdot\nabla\varphi_\sigma(z)\,dz.\]}}
A direct calculation shows that for $(t,z)\in \R_+^* \times \Omega$, one has:
\begin{align*}
    \left|\nabla f(t,z) \right|^2 &=  \left|\nabla \left(\frac{f(t,z)}{\varphi_\sigma(z)}\,\varphi_\sigma(z) \right)\right|^2\\
    &=\varphi^2_{\sigma}(z) \left| \nabla\left(\frac{f(t,z)}{\varphi_\sigma(z)}\right)\right|^2+2f(t,z)\,\nabla\varphi_\sigma(z)\cdot\nabla\left(\frac{f(t,z)}{\varphi_\sigma(z)}\right)+\frac{f^2(t,z)}{\varphi_\sigma^2(z)} \left| \nabla\varphi_\sigma(z)\right|^2 \\
    &=\varphi^2_\sigma(z) \left| \nabla\left(\frac{f(t,z)}{\varphi_\sigma(z)}\right)\right|^2+2\,\frac{f(t,z)}{\varphi_\sigma(z)}\,\nabla\varphi_\sigma(z)\cdot\nabla f(t,z) -\frac{f^2(t,z)}{\varphi_\sigma^2(z)} \left| \nabla\varphi_\sigma(z)\right|^2.
\end{align*}
So, by substituting the previous expression into $\|\nabla f\|_{2}^2$, we find 
    \begin{align*}
   \mathcal{E}(f) &= \int_\O \left(2\,\frac{f(t,z)}{\varphi_\sigma(z)}\,\nabla\varphi_\sigma(z)\cdot\nabla f(t,z) -\frac{f^2(t,z)}{\varphi_\sigma^2(z)} \left| \nabla\varphi_\sigma(z)\right|^2\right)\,dz  \\
   &\hspace{2cm}-\int_\O\left(\varphi^2_\sigma(z) \left| \nabla\left(\frac{f(t,z)}{\varphi_\sigma(z)}\right)\right|^2+2\,\frac{f(t,z)}{\varphi_\sigma(z)}\,\nabla\varphi_\sigma(z)\cdot\nabla f(t,z) -\frac{f^2(t,z)}{\varphi^2_\sigma(z)} \left| \nabla\varphi_\sigma(z)\right|^2\right)\,dz,\\
    &= -\int_\O\varphi_\sigma^2(z) \left| \nabla\left(\frac{f(t,z)}{\varphi_\sigma(z)}\right)\right|^2\,dz,
    \end{align*}
for $t\geq 0$ (time is omitted in $\mathcal{E}$). As a result, we obtain
\begin{multline}\label{eq:dtf2}
    \frac{1}{2}\frac{d}{dt}\int_{\O}f^2(t,z)\,dz=  -\Gamma(t)\int_{\O}f^2(t,z)\,dz -\alpha \int_\O\varphi_\sigma^2(z) \left| \nabla\left(\frac{f(t,z)}{\varphi_\sigma(z)}\right)\right|^2\,dz\\ +\int_{\O}(\mu(s(t),z) -\mu(\sigma,z))f^2(t,z)\,dz.
\end{multline}
On the other hand, by multiplying \eqref{eq:reformulate} by $\varphi_\sigma$ and integrating over $\O$, we get
\[\frac{d}{dt}\left(\int_{\O}f(t,z)\varphi_\sigma(z)\,dz\right)\nonumber= \int_{\O}(\mu(s(t),z) -\mu(\sigma,z))f(t,z)\varphi_\sigma(z)\,dz  -\Gamma(t)\int_{\O}f(t,z)\varphi_\sigma(z)\,dz,\]
which, upon applying \eqref{eq:dot-decomp}, yields 
\begin{equation}\label{eq:dot-lambda-rw}
    \dot\lambda(t)= \int_{\O}(\mu(s(t),z) -\mu(\sigma,z))f(t,z)\varphi_\sigma(z)\,dz  -\Gamma(t)\int_{\O}f(t,z)\varphi_\sigma(z)\,dz
\end{equation}
for $t>0$. Starting from  \eqref{eq:fl2}, 
differentiating with respect to time yields
$$
\frac{1}{2}\frac{d}{dt}\int_{\O}f^2(t,z)\,dz=\lambda(t)\dot{\lambda}(t) + \frac{1}{2}\frac{d}{dt}\int_{\O}h^2(t,z)\;dz.
$$
By combining the above relation with  \eqref{eq:lambda}, \eqref{eq:fl2}, \eqref{eq:dtf2},\eqref{eq:dot-lambda-rw} and the representation of $f$ given by \eqref{eq:decomposition}, 
we obtain that $h$ satisfies the following ODE over $\R_+^*$:
\begin{multline*}
\frac{1}{2}\frac{d}{dt}\int_{\O}h^2(t,z)\,dz=  - \Gamma(t)\int_{\O}h^2(t,z)\,dz -\alpha\int_{\O}\varphi_\sigma^2(z) \left| \nabla\left(\frac{h(t,z)}{\varphi_\sigma(z)}\right) \right|^2\,dz \\
+\lambda(t)\int_{\O}(\mu(s(t),z) -\mu(\sigma,z))f(t,z)h(t,z)\,dz.
\end{multline*}
From \eqref{eq:dot-lambda-rw}, we check that $\ds{-\Gamma(t)= \frac{\dot{\lambda}(t)}{\lambda(t)} -\frac{a(t)}{\lambda(t)}}$ for $t>0$, hence:
\begin{multline*}%\label{eq-h2-bis}
\frac{1}{2}\frac{d}{dt}\int_{\O}h^2(t,z)\,dz= \left(\frac{\dot{\lambda}(t)}{\lambda(t)} -\frac{a(t)}{\lambda}\right) \int_{\O}h^2(t,z)\,dz -\alpha\int_{\O}\varphi_\sigma^2(z) \left| \nabla\left(\frac{h(t,z)}{\varphi_\sigma(z)}\right) \right|^2\,dz\\+ \lambda(t)\int_{\O}(\mu(s(t),z) -\mu(\sigma,z))h(t,z)f(t,z)\,dz .
\end{multline*}
The elliptic operator $\l=\alpha\nabla\cdot \left(\varphi_{\sigma}^2\nabla\left(\frac{\cdot}{\varphi_\sigma}\right)\right)$ having a compact resolvent in $L^2(\O)$, its spectrum is discrete. Moreover, its principal eigenvalue is $0$ and associated to the eigenfunction $\varphi_\sigma$ and $\lambda_2(\l)$, its second eigenvalue, is strictly positive.
By using the variational characterization of $\lambda_2(\l)$ (see \cite{Courant2024}), since $h \in \varphi_\sigma^{\perp}$, we can find a constant $L_0>0$ such that
$$
\forall t>0, \; \; -\alpha\int_{\O}\varphi^2_\sigma(z) \left| \nabla\left(\frac{h(t,z)}{\varphi_\sigma(z)}\right) \right|^2\,dz\le -L_0\int_{\O}h^2(t,z)\,dz.
$$
Therefore, for all $t>0$, we get that
\begin{multline}\label{eq-h2-ter}
\frac{1}{2}\frac{d}{dt}\int_{\O}h^2(t,z)\,dz\le \left(-L_0+\frac{\dot{\lambda}(t)}{\lambda(t)} -\frac{a(t)}{\lambda}\right) \int_{\O}h^2(t,z)\,dz \\+ \lambda(t)\int_{\O}(\mu(s(t),z) -\mu(\sigma,z))h(t,z)f(t,z)\,dz .
\end{multline}
Let us bound the last term in the right-hand side. 
By using again \eqref{eq:decomposition}, we obtain that %the decomposition of $f$,
\begin{multline*}
 \int_{\O}(\mu(s(t),z) -\mu(\sigma,z))f(t,z)h(t,z)\,dz= \lambda(t)\int_{\O}(\mu(s(t),z) -\mu(\sigma,z))\varphi_\sigma(z)h(t,z)\,dz\\ + \int_{\O}(\mu(s(t),z) -\mu(\sigma,z))h^2(t,z)\,dz
\end{multline*}
for all $t\geq 0$ and thus 
\[
\int_{\O}(\mu(s(t),z) -\mu(\sigma,z))f(t,z)h(t,z)\,dz
\le \eps(t)\lambda(t)C_\sigma\int_{\O} |h(t,z)|\,dz + \eps(t)\int_{\O}h^2(t,z)\,dz
\]
where we have set  $\eps(t):=\|\mu(s(t),\cdot) -\mu(\sigma,\cdot)\|_{\infty}$ for $t\geq 0$.
Using this in \eqref{eq-h2-ter}, we then get 
\begin{multline}\label{eq-h2-2}
\frac{1}{2}\frac{d}{dt}\int_{\O}h^2(t,z)\,dz\le  \left(-L_0+\frac{\dot{\lambda}(t)}{\lambda(t)} -\frac{a(t)}{\lambda(t)} +\lambda(t)\eps(t)\right) \int_{\O}h^2(t,z)\,dz \\ +\eps(t)\lambda^2(t)C_\sigma \int_{\O} |h(t,z)|\,dz.
\end{multline}
for all $t\geq 0$. Recall that $\lim_{t\rightarrow +\infty} a(t)=\lim_{t\rightarrow +\infty}\eps(t)= 0$ and that from \eqref{eq:lambda-bound}, $\lambda$ is uniformly bounded from above and below. Consequently,  there exists $t_0\ge 0$ such that one has 
\[
\forall t\geq t_0, \; \; 
\frac{a(t)}{\lambda(t)} +\lambda(t)\eps(t) \le \frac{L_0}{2}.\]
Since   $t\mapsto \|h\|_{1}(t)$ is bounded with respect to $t$, we obtain  
from \eqref{eq-h2-2}:
\begin{equation*}\label{eq-h2-3}
\forall t\geq t_0, \; \; \frac{d}{dt}\int_{\O}h^2(t,z)\,dz<  C_1\eps(t) +  2\left(-\frac{L_0}{2}+\frac{\dot{\lambda}(t)}{\lambda(t)}\right) \int_{\O}h^2(t,z)\,dz, 
\end{equation*}
for some constant $C_1>0$. Multiplying the above equation by $e^{\int_{t_0}^t \left(L_0-2\frac{\dot \lambda (\tau)}{\lambda(\tau)}\right)\,d\tau}$,  integrating over $[t_0,t]$ and collecting all terms containing $\int_{\O}h^2(t,z)\,dz$ on the left-hand side yields 
\begin{align*}
\int_{\O}h^2(t,z)\,dz&\le  e^{\int_{t_0}^t \left(-L_0+2\frac{\dot{\lambda}(s)}{\lambda(s)}\right) \,ds} \int_{\O}h^2(t_0,z)\,dz+ C_1\int_{t_0}^t\eps(\tau)e^{\int_{\tau}^t  \left(-L_0+2\frac{\dot{\lambda}(s)}{\lambda(s)}\,ds\right)}\,d\tau,\\
&=  e^{-L_0(t-t_0)} \left(\frac{\lambda(t)}{\lambda(t_0)}\right)^2 \int_{\O}h^2(t_0,z)\,dz+ C_1\int_{t_0}^t\eps(\tau)\left(\frac{\lambda(t)}{\lambda(\tau)}\right)^2 e^{-L_0(t-\tau)}\,d\tau,
\end{align*}
the last equality resulting from a direct integration.
From \eqref{eq:lambda-bound}, one has $K_0 \le \lambda(t)\le K_1 $ for all $t\ge 0$ (where $K_0>0$), and therefore
\[\int_{\O}h^2(t,z)\,dz\le e^{-L_0(t-t_0)} \left(\frac{K_1}{K_0}\right)^2 \int_{\O}h^2(t_0,z)\,dz+ C_1 \left(\frac{K_1}{K_0}\right)^2\int_{t_0}^t\eps(\tau) e^{-L_0(t-\tau)}\,d\tau, \]
for all $t\geq 0$, and thus  
\[\limsup_{t\to+\infty}\int_{\O}h^2(t,z)\,dz\le C_1 \left(\frac{K_1}{K_0}\right)^2\int_{t_0}^{\infty}\eps(\tau) e^{-L_0(t-\tau)}\,d\tau \le \frac{C_1}{L_0} \left(\frac{K_1}{K_0}\right)^2\sup_{s\in [t_0,+\infty)} \eps(s). \]
To conclude, note that $\ds\lim_{t_0\rightarrow +\infty} \sup_{s\in [t_0,+\infty)} \eps(s)=0$ since $\eps(t)\rightarrow 0$ as $t\rightarrow +\infty$. 
Finally, we obtain
\[\lim_{t\to+\infty}\int_{\O}h^2(t,z)\,dz\le 0.\]
This ends up the proof of the claim.  
\end{proof}
\smallskip
\section{Target controllability with Monod's kinetics}
{\label{secTarget}}
\subsection{Reachability of the target set via auxostat-type controls}\label{secTarget-1}
This section  analyzes reachability properties of the optimal control problem \ref{OCP0} stated in Section~\ref{sec-intro} (with the goal of proving Proposition~\ref{prop:controlability}). For completeness, we recall its formulation. 
Let $\mu$ be of Monod type, {\it{i.e.}}, $\mu(s,z)=\frac{\bar \mu  s}{r(z)+s}$ 
where $r\in C(\bar\O)$ is positive and $\bar \mu\in \R_+^*$. Consider \eqref{main} with this kinetics and $u\in \mathcal{U}$. 
The problem under investigation reads as follows: 
\begin{equation}{\label{mt2}}\inf_{u\in \U} T_u \quad \mathrm{s.t.} \; \; \quad f(t,\cdot)\in \T_0 \quad\text{ for all }\, t\ge T_u,\end{equation}
where $f$ is the positive solution to \eqref{main} associated with the control $u\in \mathcal{U}$ and the initial condition $(s_0,f_0)$ is in the subset of $\mathcal{D}$ (recall \eqref{def-D}) given by
$$
\F= (0,s_{in})\times \left\{f\in L^1(\Omega) \; | \; 
\exists (\kappa,\eta)\in \R_+^*\times \R_+^*, \; \forall z\in \Omega, \; 
f(z)\geq \kappa\mathds{1}_{B_{\eta}(\bar x)\cap\O}(z)\right\},
$$
where $\bar x \in
\mathrm{arg\,min}_{z\in \bar \Omega} (r(z))$. 
From Theorem~\ref{th-existence}, we know that the unique solution to \eqref{main} associated with a given control $u$ and starting at the initial condition $(s_0,f_0)$ at time $t=0$ is positive for all time $t>0$. 
This leads us to the following observation, in line with~\cite{Bayen2017} in the ODE setting. For future reference, let $\mathcal{H}$ be defined as:
$$
\mathcal{H}:=[0,s_{in}]\times \left\{f\in \times L^1(\O) \; | \;  f\ge 0\quad \text{and}\quad s+\int_{\O}f(y)\,dy=s_{in}\right\}. 
$$
\begin{propri}{\label{inva}} The set $\h$ is forward invariant by \eqref{main} and attractive provided that 
$u \not \in L^1(\R_+)$. 
\end{propri}
\begin{proof}
Thanks to \eqref{eq:edo-M}, for every $(s_0,f_0)\in \h$, one has 
$M(0)=s(0)-s_{in}+m(0)=0$ and thus we have $M(t)=0$ and $s(t)+m(t)=s_{in}$ for all $t\geq 0$ showing that $\h$ is invariant. Since  $M(t)=M(0)e^{-\int_{0}^t u(\tau)\,d\tau}\to 0$ as $t\rightarrow+\infty$, the result follows. 
%the attraction's property follows. attractivity  
\end{proof}
The monotonicity properties of $\mu$ directly yields
$$
\forall s\in [0,s_{in}], \; \; \bar x\in 
\mathrm{arg\,max}_{z\in \bar \Omega}(\mu(s,z)).
$$ 
Now, in light of \eqref{target-property}, it is easy to see that \eqref{mt2} is well-posed \footnote{If $k_0>r_1$, every admissible control is optimal whereas if $k_0<r_0$, then no control can steer the system to $\mathcal{T}_0$.} 
only if $k_0\in (r_0,r_1)$. Next, our goal  is to prove that the target set is both reachable and positively invariant under \eqref{main} (Proposition \ref{prop:controlability}). 
We first derive useful properties for the principal eigenpair $(\lambda_1,\varphi_{\sigma})$ of the  spectral problem:
\begin{equation}\label{eq:varphi}
\begin{dcases}
   \alpha \Delta \varphi +\mu(\sigma,z)\varphi + \lambda \varphi =0  & \quad \text{ for  } z\in\O, \\
   \partial_{\vec{n}} \varphi(z)=0 &  \quad \text{ for   } z\in\partial\O,
\end{dcases} 
\end{equation}
where $\sigma>0$. Doing so, for every 
$(\alpha,\sigma)\in \R_+^*\times (0,s_{in})$, 
let us denote by $\varphi_{\sigma,\alpha}$ the principal eigenfunction associated with \eqref{eq:varphi}  such that $\|\varphi_{\sigma,\alpha}\|_{1}=1$. 
\begin{lemma}\label{lem:varphi}
    The principal eigenfunction $\varphi_{\sigma,\alpha}$ satisfies  $\displaystyle\lim_{\alpha \to 0}\opk{\varphi_{\sigma,\alpha}}= \min_{z\in \O}r(z)$. 
\end{lemma}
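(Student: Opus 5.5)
Since $\varphi_{\sigma,\alpha}>0$ and $\|\varphi_{\sigma,\alpha}\|_1=1$, we have $\opk{\varphi_{\sigma,\alpha}}=\int_\O r\varphi_{\sigma,\alpha}\,dz$. The lower bound $\opk{\varphi_{\sigma,\alpha}}\ge r_0$ is automatic by \eqref{target-property}. It therefore suffices to show that, for every $\delta>0$, the mass of $\varphi_{\sigma,\alpha}$ on $A_\delta:=\{z\in\O \,|\, r(z)\ge r_0+\delta\}$ tends to $0$ as $\alpha\to 0$. Indeed, in that case
$$\opk{\varphi_{\sigma,\alpha}}\le r_0+\delta+r_1\int_{A_\delta}\varphi_{\sigma,\alpha}\,dz .$$
For the Monod kinetics, $\mu(\sigma,z)=\bar\mu\sigma/(r(z)+\sigma)$ is a strictly decreasing function of $r(z)$. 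Hence $A_\delta=\{\mu(\sigma,\cdot)\le \mu^*-\eta(\delta)\}$ for some $\eta(\delta)>0$, where $\mu^*:=\max_{\bar\O}\mu(\sigma,\cdot)=\mu(\sigma,\bar x)$. The task is thus a concentration statement: as $\alpha\to0$, the principal eigenfunction concentrates where the potential $\mu(\sigma,\cdot)$ is maximal.

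\emph{Step 1: the eigenvalue converges.} I would show $-\lambda_1(\alpha)\to\mu^*$ as $\alpha\to0$. The upper bound $-\lambda_1\le\mu^*$ follows by integrating \eqref{eq:varphi} over $\O$ with Neumann conditions, which gives $-\lambda_1=\int_\O\mu(\sigma,z)\varphi_{\sigma,\alpha}\,dz\le\mu^*$ (as in the proof of Lemma \ref{lem:autoxat-equi}). For the lower bound I would use the Rayleigh quotient
$$-\lambda_1=\sup_{\psi\in H^1\setminus\{0\}}\frac{\int_\O(\mu(\sigma,\cdot)\psi^2-\alpha|\nabla\psi|^2)}{\int_\O\psi^2}.$$
Choose a smooth test function $\psi$ supported in $B_\rho(\bar x)\cap\O$, a region where $\mu(\sigma,\cdot)\ge\mu^*-\epsilon$ by continuity of $r$. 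This gives $-\lambda_1\ge\mu^*-\epsilon-\alpha C_{\rho,\psi}$, and letting $\alpha\to0$ then $\epsilon\to0$ yields the claim. The point $\bar x$ may lie on $\partial\O$; this is harmless, since Neumann $H^1$ test functions need not vanish on the boundary.

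\emph{Step 2: mass concentration.} Set $\int_\O\mu(\sigma,\cdot)\varphi_{\sigma,\alpha}=-\lambda_1\to\mu^*$ and write $\varphi=\varphi_{\sigma,\alpha}$. Then
$$\mu^*-(-\lambda_1)=\int_\O(\mu^*-\mu(\sigma,z))\varphi\,dz\ge\eta(\delta)\int_{A_\delta}\varphi\,dz .$$
Hence $\int_{A_\delta}\varphi\le(\mu^*+\lambda_1)/\eta(\delta)\to0$. This is exactly the required $L^1$ concentration, and it needs only the $L^1$ normalization and positivity. Combining it with the display above, letting $\alpha\to0$ and then $\delta\to0$, gives $\limsup_{\alpha\to0}\opk{\varphi_{\sigma,\alpha}}\le r_0$, which concludes the proof.

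\emph{Main obstacle.} The only genuinely analytic step is the lower bound in Step 1. The variational characterization and the sign conventions must be handled carefully: the principal eigenvalue of \eqref{eq:varphi} is the smallest $\lambda$, so $-\lambda_1$ is the top of the spectrum of $\alpha\Delta+\mu$. One also has to check that the test function can be localized near a possibly boundary-located minimizer of $r$ using only the continuity of $r$, since no further regularity is assumed. Everything else is elementary once the identity $-\lambda_1=\int\mu\varphi$ is in hand.
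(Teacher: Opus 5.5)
Your proof is correct, but it turns the eigenvalue information into concentration of $\varphi_{\sigma,\alpha}$ differently from the paper.

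\textbf{Step 1.} This is essentially the paper's Step 1. The paper obtains $\lambda_1\ge-\max\mu(\sigma,\cdot)$ from the Rayleigh quotient rather than by integrating the equation. It also uses shrinking bumps centred at interior points $z_k\to x_0$ instead of a single test function near $\bar x$. The content is the same.

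\textbf{Concentration step, paper's route.} The paper extracts a weak-$*$ limit $\nu$ of $(\varphi_{\sigma,\alpha_n})$ in the sense of measures. It passes to the limit in the weak formulation against $\psi\in C^2_c(\O)$ to get $\int(\mu(\sigma,\cdot)-\mu^*)\psi\,d\nu=0$. From this it deduces that $\nu$ is carried by $\mathrm{arg\,max}\,\mu(\sigma,\cdot)=\mathrm{arg\,min}\,r$, and it concludes with a subsequence-of-subsequence argument.

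\textbf{Concentration step, your route.} You test only against $\psi\equiv1$, which gives the exact identity $\int_\O(\mu^*-\mu(\sigma,z))\varphi_{\sigma,\alpha}\,dz=\mu^*+\lambda_1(\alpha)$ with a nonnegative integrand. A Chebyshev-type bound then finishes the proof.

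\textbf{What your route buys.}
\begin{itemize}
\item It is shorter and needs no compactness in $\M(\O)$ and no extraction of subsequences.
\item It handles a minimiser $\bar x\in\partial\O$ with no special care. In the paper's route this case is delicate: test functions compactly supported in $\O$ cannot detect mass of $\nu$ sitting on $\partial\O$, and identifying $\lim\opk{\varphi_{\sigma,\alpha_n}}$ with $\int r\,d\nu$ needs test functions continuous up to the boundary.
\item It is quantitative: $\opk{\varphi_{\sigma,\alpha}}-r_0\le\delta+r_1(\mu^*+\lambda_1(\alpha))/\eta(\delta)$.
\item For Monod kinetics you can even drop $\delta$. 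Since $\mu^*-\mu(\sigma,z)\ge c\,(r(z)-r_0)$ with $c=\bar\mu\sigma/((r_0+\sigma)(r_1+\sigma))$, you get $\opk{\varphi_{\sigma,\alpha}}-r_0\le c^{-1}(\mu^*+\lambda_1(\alpha))$.
\end{itemize}

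\textbf{What the paper's route buys.} Its conclusion is structural: every limit measure concentrates on the set of optimal traits. Your estimate recovers this as well. Because the mass of $\{r\ge r_0+\delta\}$ tends to $0$ for every $\delta>0$, every weak-$*$ limit of $\varphi_{\sigma,\alpha}$ on $\bar\O$ is supported in $\{r=r_0\}$.
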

\begin{proof} The proof is divided into three steps. 
\subsubsection*{Step 1:}
     For each $ \alpha>0$, let $\lambda_1(\alpha)$ be the principal eigenvalue of \eqref{eq:varphi} (the dependence of $\lambda_1(\alpha)$ w.r.t.~$\sigma$ being omitted). We begin by showing that $\lambda_1(\alpha) \to -\max_{z\in \O} \mu(\sigma,z)$ as $\alpha \downarrow 0$. 
     Recall that $\lambda_1(\alpha)$ is characterized by the variational principle 
     (see, {\it{e.g.}}, \cite{Courant2024}):
    \begin{equation}
    \label{princ_ev}
        \lambda_1(\alpha) = \min \left\{ \alpha\int_\O |\nabla \varphi|^2(z)\, dz  - \int_\O\mu(\sigma,z) \varphi^2(z)\, dz \ | \ \varphi\in H^1(\Omega)  \; \mathrm{and} \; \|\varphi\|_{2}=1\right\}.
    \end{equation}
    For all $\varphi\in H^1(\Omega)$, one has $\int_\O |\nabla \varphi|^2(z)\, dz \geq 0$. Hence,  
    the map $\alpha\mapsto\lambda_1(\alpha)$ is  increasing, thus, 
    \begin{align*}
      \alpha\int_\O |\nabla \varphi|^2(z)\, dz -\int_\Omega \mu(\sigma,z) \varphi^2(z) \, dz \ \geq \ -\max_{z\in\Omega}\mu (\sigma,z),
    \end{align*}
for all $\varphi\in H^1(\Omega)$ such that $\|\varphi\|_{2}=1$ and all $\alpha>0$.  This gives
          \begin{equation}
         \label{low_bound}
         \forall \alpha>0, \; \; 
         \lambda_1(\alpha) \geq  -\max_{z\in\Omega}\mu (\sigma,z).
     \end{equation}
     Additionally, $\alpha\mapsto \lambda_1(\alpha)$ being increasing and bounded from below, there exists $\lambda_1^0\in\R$ such that $\lambda_1(\alpha)\longrightarrow\lambda_1^0
     $
     as $\alpha\downarrow 0$. Next, our goal is to prove that 
     \begin{equation}{\label{res-interm1}}\ds{\lambda_1^0 = -\max_{z\in\Omega}\mu (\sigma,z)}.\end{equation}
     For this purpose, we introduce suitable test functions.
     Take $x_0\in\mathrm{arg\,max}\big(\mu(\sigma,\cdot)\big)$, and a sequence $(z_k)\in \text{int}(\O)^\mathbb{N}$ such that $z_k\to x_0$  as $k\rightarrow +\infty$.
     Such a sequence is introduced for technical reasons in the case where $x_0\in \partial \Omega$. 
    Now, for each $k\in\N$, let $\eta_k:\O\to\R$ be defined as
    $$
    \eta_k(z):= \left\{
     \begin{array}{ccl}
         C \exp \left( -\frac{1}{\delta_k^2-|z-z_k|^2}\right) & \mathrm{if} &|z-z_k|<\delta_k \smallskip \\
         0 & \mathrm{if} & |z-z_k| \geq \delta_k,
     \end{array}
     \right.
     $$    
where $\delta_k>0$ is such that $B_{\delta_k}(z_k)\subset \Omega$ and $C>0$ is such that $\|\eta_k\|_{2}=1$. Additionally, let us also define $\eta_{k,\eps}:\O\to\R$ as 
$$\eta_{k,\eps}(z):=\frac{1}{\eps^{n/2}} \eta_k\left(\frac{z}{\eps}\right),$$ 
where $z\in \Omega$. 
Observe that for all $(\eps,k)\in \R_+^* \times \mathbb{N}$, the function $\eta_{k,\eps}$ is of class $ C^\infty(\O)$ and that 
$$
\|\eta_{k,\eps}\|_{2}=1 \; \; \mathrm{and} \; \; \int_\O |\nabla \eta_{k,\eps}|^2(z) \,dz= \frac{1}{\eps}\int_\O |\nabla \eta_k|^2(z) \,dz.
$$
Thanks to this mollifier approach, for every function $\rho\in C(\O)$ and for all $k \in \mathbb{N}$, one has
     \begin{equation}
     \label{dirac}
         \lim_{\eps\to 0} \int_\O \rho(z)\eta_{k,\eps}^2(z) \, dz = \rho(z_k).
     \end{equation}
The proof of \eqref{dirac} being standard, it is omitted. 
 \begin{comment}
    To prove this, note that as $\|\eta_{k,\eps}\|_{2} = 1$ we can deduce that
    $$
    \int_\O \rho(z)\eta^2_{k,\eps}(z) \, dz - \rho(zs_k) = \int_\O \eta^2_{k,\eps}(z) \big(\rho(z)-\rho(z_k) \big) \, dz.
    $$
    Now take $\gamma>0$; since $\rho$ is continuous at $z_k$, there is $\eps_0>0$ such that for all $|z-z_k|<\eps_0$ we will have $|\rho(z)-\rho(z_k)|<\gamma$. By definition of $\eta_{k,\eps}$, we have $supp(\eta_{k,\eps})\subset B_{\eps\delta_k}(z_k)$, so for each $k$ there exists $\eps_k$ such that for all $\eps\le \eps_k$ we have $\eps\delta_k< d(z_k,\partial \O)$. Therefore by choosing $\eps$ small enough we achieve  $\supp (\eta_{k,\eps})\subset B_{\eps_k\delta_k}(z_k)\subset \O$ and we obtain 
    $$
    \left|\int_\O \eta^2_{k,\eps}(z) \big(\rho(z)-\rho(z_k) \big) \, dx\right| \leq \int_\O \eta_{k,\eps}^2(z) \big|\rho(z)-\rho(z_k) \big| \, dx \leq \gamma \int_\O \eta_{k,\eps}^2(z)\, dz = \gamma.
    $$
    As $\gamma$ was taken arbitrary, this proves \eqref{dirac}. 
   \end{comment}
Coming back to \eqref{princ_ev}, we obtain that
$$
     \lambda_1(\alpha) \leq   \alpha\int_\O |\nabla \eta_{k,\eps}(z)|^2\, dz-\int_\Omega \mu(\sigma,z) \eta_{k,\eps}^2(z) \, dz  
        = \frac{\alpha}{\eps}\int_\O |\nabla \eta_k(z)|^2 \,dz-\int_\Omega \mu(\sigma,z) \eta_{k,\eps}^2(z) \, dz,
$$
whence, letting $\alpha \downarrow 0$, gives
$$
\lambda_1^0  \leq -\int_\Omega \mu(\sigma,z) \eta_{k,\eps}^{2}(z) \, dz. 
$$
By construction, the above integral converges to $- \mu(\sigma, z_k)$ as $\eps \downarrow 0$, hence (recall \eqref{low_bound}):
$$
\forall k \in \mathbb{N}, \; \; -\max_{z\in\Omega}\mu (\sigma,z) \leq \lambda_1^0 \leq - \mu(\sigma,z_k). 
$$
Letting $k\rightarrow +\infty$ leads to the desired conclusion \eqref{res-interm1}, thereby completing the first step.
\subsubsection*{Step 2:} Recall that for each $\alpha>0$, one has $\|\varphi_{\sigma,\alpha}\|_{1}=1$ and consider a sequence $(\alpha_n)$ such that $\alpha_n \downarrow 0$ as $n\rightarrow +\infty$. From the Banach–Alaoglu Theorem, we may assume that, up to a subsequence, $(\varphi_{\sigma,\alpha_n})$ converges weakly-$*$ 
(in the sense of measures) to a probability measure $\nu$ as $n\rightarrow +\infty$. Now, our aim is to show that $\supp(\nu)\subset \mathrm{arg\,max}_{z\in \Omega}  \mu(\sigma , z) $. 
The weak formulation of \eqref{eq:varphi} gives us,   for all $C^2$ function $\psi$ with compact support in $\Omega$,
\begin{align*}
    0 &= -\alpha \int_\Omega \nabla \varphi_{\sigma,\alpha}(z)\cdot\nabla \psi(z) \, dz + \int_\Omega \big(\mu(\sigma,z)+\lambda_1(\alpha) \big) \varphi_{\sigma,\alpha}(z) \psi(z)\, dz \\
    &= \alpha \int_\Omega \Delta \psi(z) \varphi_{\sigma,\alpha}(z)\, dz + \int_\Omega \big(\mu(\sigma,z)+\lambda_1(\alpha) \big)\psi(z) \varphi_{\sigma,\alpha}(z)\, dz,
\end{align*}
and for all $\alpha>0$. By taking $\alpha=\alpha_n$ and letting $n\rightarrow +\infty$, we get
\begin{align*} 
    \int_\Omega \left(\mu(\sigma,z)- \max_{z\in\Omega}\mu (\sigma,z) \right)  \psi(z)\, d\nu(z)=0.
\end{align*}
Since $\psi$ is arbitrary, we deduce that for $\nu-$almost every $z\in \supp \nu$,
$$ 
\mu(\sigma,z)= \max_{z'\in\Omega}\mu (\sigma,z').
$$
As a conclusion of this step, there exists a subset $A\in \O$ such that $\nu(A)=0$ and satisfying $\supp \nu \setminus A \subset \mathrm{arg\,max}_{z\in \Omega}\mu(\sigma,z)$.
\subsubsection*{Step 3:} Our last step is to show that $\lim_{\alpha \to 0}\opk{\varphi_{\sigma,\alpha}}= \min_{z\in\O}r(z)$. 
Doing so, we first prove that $\lim_{n \to +\infty}\opk{\varphi_{\sigma,\alpha_n}}= \min_{z\in\O}r(z)$. For this purpose, we start by a simple observation. Take $z_0\in\mathrm{arg\,max}_{z\in \Omega}  \mu(\sigma,z)$ and observe that
$$
\mu(\sigma,z_0) = \max_{z\in\O} \frac{\bar\mu \sigma}{r(z)+\sigma} =   \frac{\bar \mu\sigma}{\min_{z\in\O} r(z)+\sigma} =   \frac{\bar\mu \sigma}{r(z_0)+\sigma},  
$$
which gives $z_0\in\mathrm{arg\,min}_{z\in \Omega} r(z)$. 
Second, since $\supp(\nu)\setminus A\subset \mathrm{arg\, max}_{z\in \Omega} \mu(\sigma ,z)\subset \mathrm{arg\,min}_{z\in \Omega} r(z)$ we obtain, thanks to the weak-$*$ convergence of  $(\varphi_{\sigma,\alpha_{n}})$ to $\nu$, 
$$
\lim_{n\rightarrow +\infty}\opk{\varphi_{\sigma,\alpha_n}} 
=  \frac{\int_\O r(z)\, d\nu(z)}{\int_\O \, d\nu(z)} = \min_{z\in\O} r(z),  
$$
To end up the proof of step 3, suppose by contradiction that the claim is false. 
Then, there would exist $\eps>0$ and a sequence $(\alpha_n)$ such that $\alpha_n \downarrow 0$ and satisfying $|\opk{\varphi_{\sigma,\alpha_n}}-\min_{z\in\O}r(z)|>\eps$. 
But, by using the previous steps,
we can extract a converging subsequence $(\varphi_{\sigma,\alpha_{n_k}})$ such that 
$\opk{\varphi_{\sigma,\alpha_{n_k}}} \to \min_{z\in\O} r(z)$ as $k\rightarrow +\infty$. 
This contradicts $|\opk{\varphi_{\sigma,\alpha_{n_k}}}-\min_{z\in\O}r(z)|>\eps.$ This ends up the proof of step 3 and the lemma. 
\end{proof}
We are now in a position to prove Proposition \ref{prop:controlability}.
\begin{proof}[Proof of Proposition \ref{prop:controlability}] {\it{Step 1 : $\alpha>0$}}. We first prove the result for $\alpha\not=0$.
Recall that our goal is to show the existence of $\alpha_0\in \R_+^*$ such that for all $\alpha\in [0, \alpha_0)$ and for all $\;(s_0,f_0)\in \F$ there is $u\in\U$ and $T_{u}$ such that $f(t,\cdot)\in \T_0$ for all $t\geq T_u$. 
Accordingly, let us consider a
control function of the form
\begin{equation}{\label{control3}}
u(t)=
\left\{
\begin{array}{cll}
\upsilon & \mathrm{if} & t\in [0,T_0),\smallskip\\
u_\sigma(t)& \mathrm{if} & t\in [T_0,+\infty),
\end{array}
\right.
\end{equation}
where $\upsilon$ is given by \eqref{def-cst}, $\sigma\in (0,s_{in}/2)$, $u_\sigma$ is the auxostat-type control given by (iv), and $T_0$ will be fixed hereafter.  
Let  $(s_0,f_0)\in\F$. Thanks to \eqref{eq:edo-M}, there is $T_0\geq 0$ such that for all $t\geq T_0$, one has 
$s(t)+m(t)\le 2s_{in}$.
 We deduce that 
$$
\forall t \geq T_0, \; \; u_\sigma(t)=\frac{1}{s_{in}-\sigma}\int_{\O}\mu(\sigma,z)f(t,z)\,dz \le \frac{\upsilon m(t)}{s_{in}-\sigma}\le \frac{2\upsilon s_{in}}{s_{in}-\sigma}\le 4 \upsilon s_{in}\le u_{max}.
$$
With $\upsilon\leq u_{max}$, the control in \eqref{control3} is thus admissible. 
Let us denote by $(s_\alpha,f_\alpha)$ the associated solution to \eqref{main} starting at $(s_0,f_0)$ at $t=0$.  
By \Cref{th:stabi-other}, $(s_\alpha,f_\alpha)$ converges to $(\sigma,\theta\varphi_{\sigma,\alpha})$ in $L^2(\O)$ as $t\to+\infty$, where 
$\theta:=\frac{s_{in}-\sigma}{\|\vphi_\sigma\|_{1}}$. Let us given $\delta\in (0,\delta_0)$. 
By application of Lemma~\ref{lem:varphi}, since $r_0<k_0$ (recall \eqref{target-property}), there is $\alpha_0>0$, such that
\begin{equation}{\label{limit0}}
\forall \alpha \in (0,\alpha_0], \; \; 0 \leq \opk{\varphi_{\sigma,\alpha}} \leq k_0-\delta. 
\end{equation}
To complete the proof, we now analyze the convergence of $\opk{f_\alpha(t,\cdot)}$ as $t\rightarrow +\infty$. Using that 
$f_\alpha(t,\cdot)\to \bar{f}_\alpha$ in $L^2(\O)$ as $t\to\infty$, we deduce that $\opk{f_\alpha(t,\cdot)}\rightarrow \opk{\theta\varphi_{\sigma,\alpha}}$ as $t\rightarrow +\infty$. From the definition of $\mathcal{K}$, note that 
$\opk{\theta\varphi_{\sigma,\alpha}}=\opk{\varphi_{\sigma,\alpha}}$. Together with \eqref{limit0}, this implies that $\mathcal{T}_0$ is reachable for all $\alpha\in (0,\alpha_0]$. Moreover, by 
\eqref{limit0}, the solution stays in the target set for all sufficiently large times. 
\medskip

\noindent 
{\it{Step 2: $\alpha=0$}}. To conclude the proof, we have to show that $\T_0$ is reachable if $\alpha=0$ (no mutation). 
From \cite{Desvillettes2008,Mirrahimi2012a,Perthame2007}, the unique solution $(s,f)$ to \eqref{main} associated with the constant control $u(t)=\upsilon/2$ starting at $(s_0,f_0) \in \F$ at $t=0$, converges weakly-$*$ to a positive measure $\nu$ with support in the set $\ds{\mathrm{arg\,max}_{z\in \supp(f_0)}(\mu(\bar s, z )-\upsilon/2)}$ where $\bar s\in (0,s_{in})$. Note that
$$
\max_{z\in\O} \mu(\bar s,z) = \max_{z\in\O} \ \frac{\bar\mu\ \bar s}{r(z)+\bar s}  =  \frac{\bar\mu\ \bar s}{\min_{z\in\O} r(z)+\bar s},
$$
and thus $\mathrm{arg\,min}_{z\in\Omega}\, r(z) = \mathrm{arg\,max}_{z\in\Omega}\, \mu(\bar{s},z)$. Now, recalling the definition of the initial condition set \eqref{def-D}, we know that for some $\bar x \in \mathrm{arg\,min}_{z\in\Omega}\,r(z)$, one has $\bar{x}\in \supp(f_0) $, thus 
$$
\ds{\mathrm{arg\,max}_{z\in \supp(f_0)}\,\mu(\bar s, z )} = \ds{\mathrm{arg\,max}_{z\in \O}\,\mu(\bar s, z ) \cap \supp(f_0)}. 
$$
Finally we obtain
$$\supp(\nu) \subset \ds{\mathrm{arg\,max}_{z\in \O}(\mu(\bar s, z )-\upsilon/2) \cap \supp(f_0)} \subset \mathrm{arg\,min}_{z\in\Omega}\, r(z), 
$$
and thus $\opk{\nu}=r_0$. By using the weak-$*$ convergence $f(t,\cdot)\stackrel{*}{\rightharpoonup} \nu$  as $t\rightarrow +\infty$, 
we conclude as well that  there is $T_0>0$ such that for all $t\geq 0$, one has $\opk{f(t,\cdot)} <  k_0$, which ends  the proof. 
\end{proof}

\subsection{Numerical simulation of auxostat-type controllers}{\label{sec-num-auxostat}}

Numerical simulations of system~\eqref{main} were performed using the finite element software \texttt{FreeFem++}. Piecewise linear (P1) elements were employed on a one-dimensional mesh consisting of 5000 nodes, and time integration was carried out using a semi-implicit Euler scheme with a time step $\Delta t = 0.01$. To illustrate graphically the result stated in~\Cref{prop:controlability}, we compute numerically solutions to \eqref{main} where $u$ is the auxostat-type control (iv). This allows to obtain $\opk{f_\alpha(t,\cdot)}$ for various values of $\alpha \in [0,0.01]$. As initial condition, we take $f_0 \equiv 5$, $s_0=5$ and we set $\Omega := [1,3]$, $\sigma := 9$ and $s_{in} := 35$. Moreover, 
the growth function is defined as $\mu(s,z) := \frac{s}{z + s}$ and $u(t)= \frac{1}{s_{in} -\sigma}\int_{\O}\mu(\sigma,z)f(t,z)\,dz$ (case (iv)). Numerical results are depicted on Fig.~\ref{fig:K_fa}. 

\begin{figure}[ht]%K_fa.pdf = fig1.pdf
    \centering
\includegraphics[width=0.65\linewidth]{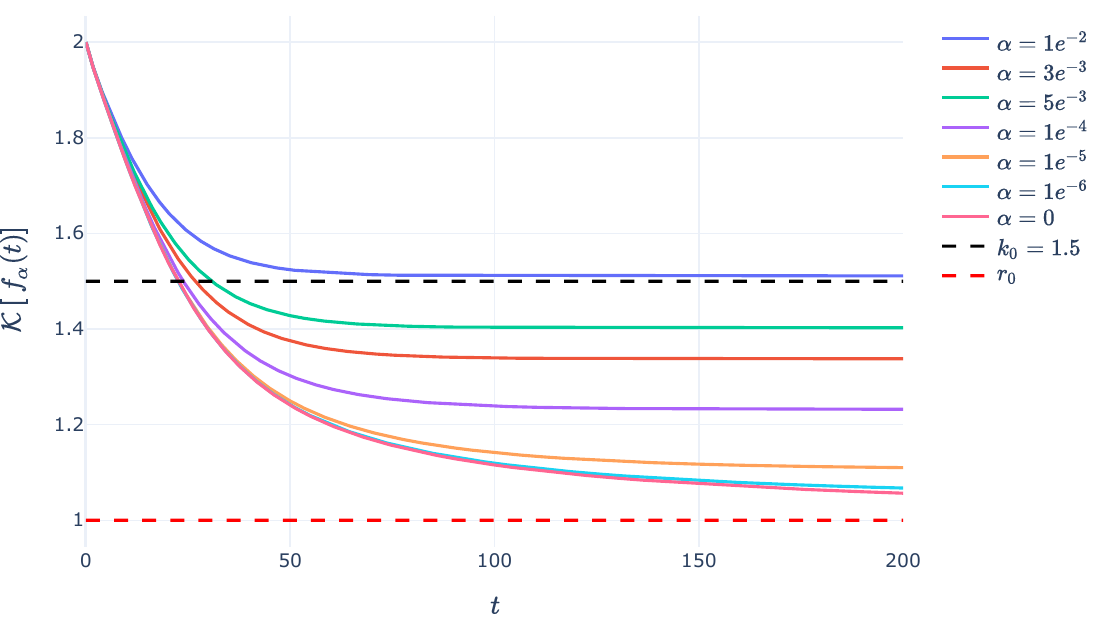}
    \caption{Behavior of the functional $\opk{f_\alpha(t,\cdot)}$ w.r.t.~time for 
    different values of the parameter $\alpha$, where $f_\alpha$ is the solution of \eqref{main} with $u(\cdot)$ given by (iv).}
    \label{fig:K_fa}
\end{figure}

As we can observe on Fig.~\ref{fig:K_fa}, for $k_0=1.5$, the target set is reached  at some time less than $35$ for every $\alpha<0.01$, confirming Proposition \ref{prop:controlability}. We also observe numerically that for $\alpha = 0.01$, the trajectory reaches an equilibrium point approximately at time $t=70$ such that $\opk{\bar f_{0.01}} \approx 1.511$. 
Consequently, for this specific value of $\alpha$, the target set is not reached using this auxostat-type control if $k_0 < 1.511$. On the other hand, when $\alpha < 0.01$, we see that the value of the functional $\opk{f_\alpha(t,\cdot)}$ decreases over time and seems to converge to a constant value as the solution approaches an equilibrium point. This behavior shows numerically that, under the control (iv), solutions reach the target set in finite time and remain in it hereafter. 
\smallskip
\section{Existence of an optimal control and numerical approximation}{\label{sec-existence}}
In this section, we prove Theorem \ref{thm-bigBoss} about the existence of a solution to the minimum time control problem  \eqref{OCP0}. We also perform numerical optimization of the time-to-target within two control subclasses: first for auxostat-type controls, and then for constant controls.
\subsection{Proof of Theorem \ref{thm-bigBoss}}\label{sec-existence-1}
Let $f_0\in \F$.  
It follows from Proposition~\ref{prop:controlability} that the state can be steered to $\mathcal{T}_0$ in finite time and, once there, remains in $\mathcal{T}_0$ for all future time. We can then consider a minimizing sequence $(T_n,u_n)$, {\it{i.e.}}, such that 
$$
\ds{T_n \to T^*:=\inf_{u\in \U}T_{u}} \quad \mathrm{as} \; \; n\rightarrow +\infty,
$$ 
and where for all $n\in \mathbb{N}$, 
$u_n$ is an admissible control and 
$T_n \geq 0$ is the first entry-time of the associated solution into the target set.
Without any loss of generality, we may assume that $(T_n)$ is non-increasing and such that $T_n \le T_0$ for all $n\in \mathbb{N}$. Our goal is to prove that $(u_n)$ converges, up to a subsequence, to an admissible control $u^*$ and that this limit $u^*$ steers the system to the target in the minimal time $T^*$. By definition of $\U$, for all $T>0$ and for all $n\in \mathbb{N}$, $u_n(t)\in [0,u_{max}]$ for a.e.~$t\in [0,T]$, therefore $(u_n)$ is uniformly bounded in $L^2([0,T])$. 
As a consequence, there is 
$u^*\in L^2([0,T])$ such that, up to a subsequence, 
$(u_n)$ weakly converges to $u^*$ in $L^2([0,T])$. By a diagonal extraction, there exists $u^*\in\U$, such that $(u_n)$ converges weakly in $L^2_{loc}([0,+\infty))$. 
We wish now to prove that the optimal time $T^*$ is achieved for the control $u^*$. 
This relies on the following result, stated as a Lemma.
\begin{lemma}{\label{lem:f*-existence}}
    There exists a solution ($f^*,s^*)$ to \eqref{main} such that  
    $$ \forall t \geq T^*, \; \; \opk{f^*(t,\cdot)}\le k_0.$$
\end{lemma}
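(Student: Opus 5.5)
The plan is to take $(f^*,s^*)$ to be the solution of \eqref{main} associated with the weak limit $u^*$ (well defined by Theorem~\ref{th-existence}, or Theorem~\ref{th:existence-alpha0} when $\alpha=0$). We will show that $f_n\to f^*$ strongly in $L^1(\O)$, locally uniformly in time, where $(s_n,f_n)$ is the solution for $u_n$. Once this holds, for each fixed $t\ge T^*$ we have $t\ge T_n$ for $n$ large, since $T_n\downarrow T^*$ (if $t=T^*$ we use instead $t_k\downarrow T^*$ and continuity of $t\mapsto f^*(t,\cdot)$ in $L^1$). Hence $\opk{f_n(t,\cdot)}\le k_0$. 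Since $r$ is bounded and $m^*(t)>0$, the functional $\k$ is continuous in $L^1$ at $f^*(t,\cdot)$, and passing to the limit gives $\opk{f^*(t,\cdot)}\le k_0$.

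For the convergence, we first pass to the limit in the ODE part. By \eqref{eq:edo-M}, $M_n(t)=M(0)e^{-\int_0^t u_n}$, and weak $L^2_{loc}$ convergence of $u_n$ gives $\int_0^t u_n\to\int_0^t u^*$ pointwise, uniformly on compacts (the primitives are equi-Lipschitz with constant $u_{max}$). Hence $M_n\to M^*$ and $e^{\int_0^t u_n}\to e^{\int_0^t u^*}$ locally uniformly. It is therefore natural to work with the reformulation \eqref{eq-red:bis}. There $\tilde f_n$ solves a linear parabolic equation whose coefficient $c_n=\mu(M_n+s_{in}-\frac{M_n}{M(0)}\tilde m_n,z)$ depends on $u_n$ only through $M_n$, and not through $u_n$ itself. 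This removes the main difficulty: a weakly converging $u_n$ multiplying $f_n$.

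We then obtain uniform bounds on $\tilde f_n$ independent of $n$. We have $\tilde m_n\le e^{C_0t}m(0)$ as in \eqref{tmp-mintime}, and the $\tilde m_n$ are equi-Lipschitz. For $\alpha>0$, the arguments of Step 2 of the proof of Theorem~\ref{thm:existence1} (heat kernel smoothing for $t\ge\delta$, then $L^p$ and Schauder estimates) give bounds in $C^{1,\beta/2}([\delta,T],C^{2,\beta})$ uniform in $n$. Arzelà--Ascoli then yields a subsequence converging locally uniformly on $(0,\infty)\times\bar\O$ to some $\tilde g$. The limit solves \eqref{eq-red:bis} with $M^*$, because $M_n\to M^*$ and $\tilde m_n\to\tilde m_g$. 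Uniqueness (Theorem~\ref{thm:existence2}) then identifies $\tilde g=\tilde f^*$, so the whole sequence converges.

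The main obstacle is the case $\alpha=0$ (and continuity near $t=0$), where there is no smoothing. For that case the plan is to write the explicit solution $\tilde f_n(t,z)=f_0(z)\exp\big(\int_0^t c_n(\tau,z)\,d\tau\big)$. We then show, by a Gronwall argument on $\tilde m_n-\tilde m^*$ using the Lipschitz property of $\mu$ from Hypothesis~\ref{hyp1}, that $\tilde m_n\to\tilde m^*$ uniformly on compacts. This gives $L^1$ convergence directly by dominated convergence. The same Gronwall-type comparison, performed through the $L^1$-contraction of the Neumann heat semigroup in mild form, also handles $\alpha>0$ uniformly down to $t=0$. Finally, $f_n=e^{-\int_0^t u_n}\tilde f_n\to f^*$, which concludes the argument.
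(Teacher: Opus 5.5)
Your argument is correct. For $\alpha>0$ your first sketch is essentially the paper's: uniform bounds for $\tilde f_n$ on $[\delta,T]$ via the heat kernel and Schauder estimates, extraction of a subsequence, and identification of the limit by uniqueness.

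\textbf{Differences from the paper.} There are two real differences.
\begin{enumerate}
\item The paper passes to the limit in the mild form of \eqref{eq-red}, so it must handle the product $u_nf_n$ of a weakly converging and a pointwise converging sequence. By working in \eqref{eq-red:bis} you observe that the control enters only through $\int_0^t u_n$, which converges uniformly on compacts, so this issue disappears.
\item For $\alpha=0$ the paper uses the measure-valued Aubin--Lions lemma (Lemma~\ref{lem:radon-compact}, proved in the appendix). It then identifies the equation satisfied by the limit measures $\nu_t$, and recovers $\nu_t=\tilde f^*(t,\cdot)\,dz$ by a separate Gr\"onwall/uniqueness argument on the masses. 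You bypass all of this with the explicit formula $\tilde f_n=f_0\exp\big(\int_0^t c_n\big)$ and a Gr\"onwall estimate on $\tilde m_n-\tilde m^*$. This gives $\sup_{[0,T]}\|\tilde f_n-\tilde f^*\|_1\to0$ directly, for the whole sequence and not just a subsequence.
\end{enumerate}

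\textbf{Your mild-form estimate.} Your stability estimate is
$\|\tilde f_n(t)-\tilde f^*(t)\|_1\le\int_0^t\big(\|c_n-c^*\|_\infty\,\tilde m_n+C_0\|\tilde f_n-\tilde f^*\|_1\big)\,d\tau$, with
$\|c_n-c^*\|_\infty\lesssim|M_n-M^*|+|e^{-\int_0^\tau u_n}-e^{-\int_0^\tau u^*}|+\|\tilde f_n-\tilde f^*\|_1$.
It relies only on the $L^1$-contraction of the Neumann semigroup and the Lipschitz bound of Hypothesis~\ref{hyp1}. It treats every $\alpha\ge0$ at once, including $t$ near $0$, needs no compactness, and gives uniqueness as a byproduct. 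In effect it shows that the control-to-state map is continuous from weak $L^2_{loc}$ into $C([0,T],L^1(\O))$, which is exactly what the existence argument needs.

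The paper's compactness route gives, for $\alpha>0$, convergence of $\tilde f_n$ in much stronger norms on $[\delta,T]$, but the lemma does not need this.

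Your concluding step is equivalent to the paper's use of weak convergence together with a lower bound on $m_n$. It rests on strong $L^1$ convergence and on $m^*(t)\ge m(0)e^{-u_{max}t}>0$, so that $\opk{\cdot}$ is continuous at $f^*(t,\cdot)$. The case $t=T^*$ then follows by continuity in time.
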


\begin{proof}
Since $u^* \in \mathcal{U}$, \Cref{th-existence,th:existence-alpha0} guarantee the existence of a mild solution $(s^*, f^*)$ to \eqref{main} for every $\alpha \in [0, \alpha_0)$. 
To complete the proof, it remains to show that  $f^*$ satisfies 
$$ \forall t \geq T^*, \; \; \opk{f^*(t,\cdot)}\le k_0.$$
Doing so, let $t_0> T^*$ and $T:=\max(T_0, t_0+1)$.
From the weak convergence of $u_n$ to $u^*$, we have
$$
\forall t\in [0,T], \;\;  M_n(t)\to M^*(t)=e^{-\int_0^t u^*(\tau)\,d\tau}, 
$$ 
as $n \rightarrow +\infty$. In addition, by using \eqref{eq:edo-M}, the sequence $(M_n)$ is uniformly bounded  in $C^{0,1}(\R_+)$. So, up to a subsequence, we may assume that $(M_n)$ converges to $M^*$  uniformly over $[0,T]$. 
We now treat the case $\alpha>0$ and $\alpha=0$ separately. 
\smallskip

\noindent {\it{First case : $\alpha>0$}}. 
By \Cref{thm:existence1,thm:existence2}, for each $n\in \mathbb{N}$, there exist $f_n$, $\tilde f_n$ solving respectively \eqref{eq-red} and \eqref{eq-red:bis}. Moreover, for all $n\in \mathbb{N}$, one has: 
$$f_n=e^{-\int_{0}^t u_n(\tau)\,d\tau}\tilde f_n \; \; \mathrm{and} 
\; \; \tilde f_n \in C^1(\R^*_+,C^{2,\beta}(\O)\cap C^{0,\beta}(\bar \O))\cap C(\R_+,L^1(\O)).$$ 
%In addition, since $\mu$ satisfies 
Under Hypotheses \ref{hyp0}-\ref{hyp1}, by integrating \eqref{eq-red} and using that $(m_n), (u_n)$ are uniformly bounded over $[0,T]$, we obtain that the sequence $(m_n)$ is uniformly bounded in the space $C^{0,1}(\R_+)$.
So, there exists $\hat m\in C([0,T])$ such that, up to a subsequence, $(m_n)$ uniformly converges  to $\hat m$   over $[0,T]$.
From \Cref{sec1} (recall \eqref{tmp-mintime}), for all $n\in \mathbb{N}$, we have
$$\tilde f_n\le e^{C_0t}\ops{f_0(\cdot)}{t},$$ 
where $\s_t$ denotes  the Neumann heat kernel semi-group.
As a result, for all $n\in \mathbb{N}$ and for all $\delta\in(0,T)$ and all
$t\in [\delta,T]$, we have  $\|\tilde f_n\|_{\infty}(t)\le e^{C_0T} C\delta^{-\frac{d}{2}}\|f_{0}\|_1$. 
Therefore, by using that  $(m_n)$ and $(M_n)$ are bounded and Lipschitz-continuous independently of $n$, we deduce from Schauder estimates, that  the sequence $(\tilde f_n)$ is  uniformly bounded in the space $C^{1,\frac{\beta}{2}}\big ([\delta,T],C^{2,\beta}(\O)\cap C^{0,\beta}(\bar\O)\big)$ for some $\beta\in(0,1)$.
As a consequence, by taking a sequence  $(\delta_n), \delta_n \downarrow 0$ and by using a classical  diagonal extraction argument, $(\tilde f_n)$ strongly  converges  to  some function $\tilde g$ in $C([0,T],L^1(\O))\cap C^{1,\frac{\beta'}{2}}\big((0,T],C^{2,\beta'}(\O)\cap C^{0,\beta'}(\bar\O)\big)$ for some $\beta'\in (0,\beta)$. % \textcolor{red}{Terence: should we add (using classical convergence arguents letting $\delta \downarrow 0$)}\JC{OK}. 
By using that $(M_n)$ uniformly converges to $M^*$ over $[0,T]$ and that for all $n\in \mathbb{N}$, $f_n=\frac{M_n}{s_0+m(0)-s_{in}}\tilde f_n$, we also get that $f_n\to g$ locally uniformly in $(0,T]$.
Moreover, by using the mild representation of $f_n$, one has for all $t\in (0,T]$ and all $n\in \mathbb{N}$ 
$$ f_n(t,z)=\ops{f_0}{t}(z) + \int_{0}^t\ops{\mu(M_n(\tau)+s_{in} - m_n(\tau),\cdot)f_n(\tau,\cdot) - u_n(\tau) f_n(\tau,\cdot) }{t-\tau}(z)\,d\tau.$$
Hence, in the limit $n\rightarrow +\infty$, 
the weak $L^2$-convergence of $(u_n)$ and the point-wise convergence of $(f_n)$ imply that $g$ satisfies
\[ g(t,z)=\ops{f_0}{t}(z) + \int_{0}^t\ops{\mu(M^*(\tau)+s_{in} - \hat m(\tau),\cdot)g(\tau,\cdot) - u^*(\tau) g(\tau,\cdot) }{t-\tau}(z)\,d\tau\]
for all $(t,z)\in (0,T]\times \Omega$. 
Thus for all $t\in (0,T]$,  $g$ is a mild solution to \eqref{main} associated with $u^*$. By uniqueness of the mild solution (\Cref{th-existence}), we conclude that 
$g\equiv f^*_{|_{[0,T]\times \O}}$. 
Now observe that for all $t\in (T^*,T)$, there exists  $n_t\in \mathbb{N}$ such that for all $n\ge n_t,$ 
$\opk{f_n(t,\cdot)}\le k_0$. 
Therefore, by using the weak convergence of $(f_n)$  %for all $t\in (T^*,T)$ 
to $g$ over $(T^*,T)$ together 
with the inequality $m_n(t)\ge m(0)e^{-u_{max}T_0}$ we deduce that 
$\opk{f^*(t,\cdot)}=\opk{g(t,\cdot)}\le k_0$ for all $t\in (T^*,T)$.
In particular, we have $\opk{f^*(t_0,\cdot)}\le k_0$ since $t_0\in (T^*,T)$. 
The above argument being true for any $t_0>T^*$, we  deduce that 
$$\forall t \geq T^*, \; \; \opk{f^*(t,\cdot)}\le k_0.$$
Hence, $f^*$ enters permanently into the target set $\T_0$ in time $T^*$.
 \medskip
 
\noindent {\it{Second case : $\alpha=0$}}. In this case, 
 the methodology is essentially the same as when $\alpha>0$. However since the problem \eqref{main} does not exhibit regularizing properties, the argument above must be revised to handle substantially less regular spaces and weaker topologies.  
 By \Cref{th:existence-alpha0}, for all $n\in \mathbb{N}$,  $f_n\in C^1(\R_+,L^1)$ and from the definition of  $\tilde f_n$, we also have 
 $$ \int_{\O}\tilde f_n(t,z)\,dz \le e^{t\,u_{max}}(m_0+s_{in}+s_0)$$
 for all $t\ge 0$ and all $n \in \mathbb{N}$. 
 So, there is $K_0>0$ such that 
 \begin{equation}{\label{estim01}} 
 \forall n \in \mathbb{N}, \; \forall t\in [0,T], \; \;\int_{\O}\tilde f_n(t,z)\,dz \le K_0.\end{equation}
  Let $\omega$ be an arbitrary subset of $\Omega$.  
 Integrating  \eqref{eq-red:bis} over $\omega$ and using the fact that $\alpha=0$ yields
 \begin{equation}{\label{estim02}} 
 \forall n \in \mathbb{N}, \; \forall t\in (0,T], \; \;
 \left| \int_{\o}\partial_t \tilde f_{n}(t,z)\,dz \right|\le C_0\int_{\o}\tilde f_n(t,z)\,dz\le C_0 K_0, 
 \end{equation}
and  \begin{equation}{\label{estim03}}
\forall n \in \mathbb{N}, \; \forall t\in [0,T], \; \;
  \int_{\o} \tilde f_{n}(t,z)\,dz \le e^{C_0t}\int_{\o}\tilde f_0(z)\,dz. 
 \end{equation} 
For a function satisfying estimates \eqref{estim01}-\eqref{estim02}-\eqref{estim03}, we can apply the following lemma. Its application will enable us to extract a convergent subsequence, thereby completing the proof of the theorem for the case $\alpha=0$. Let $C_b(\Omega)$ denote the space bounded functions over $\Omega$. 
\begin{lemma}\label{lem:radon-compact}
     There exist a family of bounded Borel measure $(\nu_t)_{t\in[0,T]}$  and a subsequence  
      $(\tilde f_{n_k})_{k\in\N}$ of $(\tilde f_n)$ such that,  for all $t\in[0,T]$,  $\tilde f_{n_k}\stackrel{\ast}{\rightharpoonup} \nu_t$,  that is, for  any $\varphi \in C_b(\O)$, 
 $$\lim_{k \rightarrow +\infty}\, \int_{\O}\varphi(z)\tilde f_{n_k}(t,z)\, dz =\int_{\O}\varphi(z)\, d\nu_t(z).$$
 \end{lemma}

 Since $\O\subset \R^d$ is bounded, the space $(\M(\O),\mathrm{weak-}*)$ is a complete separable metric space  and the lemma follows from a generalization of the Aubin-Lions type Lemma due to Simon \cite{Simon1986}. For sake of completeness, a proof of this lemma is given in the appendix.
Let us now identify the equation satisfied by the collection of measures $(\nu_t)_{t\in[0,T]}$.
Recall that $\tilde f_n$ satisfies the following representation: 
$$
\forall t\in [0,T], \; \forall z\in \Omega, \; \tilde f_n(t,z)= f_0(z) + \int_{0}^t\mu(M_n(\tau)+s_{in} - m_n(\tau),z)\tilde f_n(\tau,z)\,d\tau.
$$ 
So, for any $\varphi\in C_b(\O)$, we have  for all $t\in[0,T]$,
\begin{equation}\label{eq:*-weak-formulation}
\int_{\O}\varphi(z)\tilde f_n(t,z)\, dz = \int_{\O}\varphi(z) f_0(z)\,dz + \iint_{\tilde K_t}
\mu(M_n(\tau)+s_{in} - m_n(\tau),z)\varphi(z)\tilde f_n(\tau,z) \,dz d\tau,
\end{equation}
where $\tilde K_t:=[0,t]\times \Omega$. 
Now, since $(m_n)$ and $(M_n)$ are uniformly bounded in $C^{0,1}([0,T])$, by using  the Arzelà-Ascoli Theorem, we may assume that, up to a subsequence,  $(M_n)$ converges pointwise to 
$M^*$ as $n\rightarrow +\infty$. We deduce that for all $t\in[0,T]$, $$m_n(t)=\frac{M_n(t)}{M_n(0)}\int_{\O}\tilde f_n(t,z)\,dz \to \frac{M^*(t)}{M^*(0)}\int_{\O}d\nu_t(z)$$ as $n\rightarrow +\infty$, 
thanks to weak-$*$ convergence of $\tilde f_n$. 
Thus, passing to the limit as $n\rightarrow +\infty$ in \eqref{eq:*-weak-formulation}, we obtain that 
for all $t\in [0,T]$ and  all $\varphi \in C_b(\O)$,
\begin{equation}\label{eq:mild-nut}
\int_{\O}\varphi(z)\, d\nu_t(z) = \int_{\O}\varphi(z) f_0(z)\,dz +  \iint_{\tilde K_t}\mu\left(M^*(\tau)+s_{in} - \frac{M^*(\tau)}{M^*(0)}\int_{\O}d\nu_\tau(z),z\right)\varphi(z) \,d\nu_{\tau}(z)\,d\tau.
\end{equation}
Fix $\varphi\in C_b(\O)$. From the above equation, since $\|\mu\|_{\infty}\le \upsilon $ %C_0$ 
and $t\mapsto \nu_t(\O)$ is uniformly bounded over $[0,T]$,  the quantity $\int_{\O}\varphi(z)\, d\nu_t(z)$ is Lipschitz-continuous w.r.t.~$t$. Consequently, it is differentiable w.r.t.~$t$ and we 
get for almost every $t\in (0,T)$,
$$
\frac{d}{dt}\left(\int_{\O}\varphi(z)\,d\nu_t(z)\right)= \int_{\O}\mu\left(M^*(t)+s_{in} - \frac{M^*(t)}{M^*(0)}\int_{\O} d\nu_t(z),z\right)\varphi(z) \,d\nu_{t}(z).
$$
Multiplying the previous equality by $e^{-\int_{0}^{t}u^*(\tau) d\tau}$ and integrating over $(0,t)$ yields
$$
\int_{0}^t e^{-\int_{0}^{\tau }u^*(s)\, ds}\frac{d}{dt}\left(\int_{\O}\varphi(z)\,d\nu_\tau(z)\right)d\tau= 
\iint_{\tilde K_t}\mu\left(M^*(\tau)+s_{in} - \int_{\O}d\zeta_\tau(z),z\right)\varphi(z) \,d\zeta_{\tau}(z)d\tau
$$
where $(\zeta_t(z))_{t\in[0,T]}$ is the  family of measure defined for $t\in[0,T]$ by  $\zeta_{t}(z):=e^{-\int_{0}^tu^*(\tau)\,d\tau}\nu_t(z)$.
Using integration by parts, the left hand side becomes
\begin{multline*}
    \int_{0}^t e^{-\int_{0}^{\tau }u^*(s) \,ds}\frac{d}{dt}\left(\int_{\O}\varphi(z)\,d\nu_\tau(z)\right)d\tau=\int_{\O}e^{-\int_{0}^{t }u^*(s)\, ds}\varphi(z)\,d\nu_t(z) - \int_{\O}\varphi(z)\,d\nu_0(z) \\ + \int_{0}^t u^*(\tau) \int_{\O}e^{-\int_{0}^{\tau} u^*(s) \,ds} \varphi(z)\,d\nu_t(z)d\tau,
\end{multline*} 
for $t\in [0,T]$, which gives % and thus we get 
$$
\int_{\O}\varphi(z)\,d\zeta_t(z)= \int_{\O}\varphi(z)f_0(z)\,dz + 
\iint_{\tilde K_t}\left[\mu\left(M^*(\tau)+s_{in} - \int_{\O}\,d\zeta_\tau(z),z\right) -u^*(\tau)\right]\varphi(z) \,d\zeta_{\tau}(z)d\tau
$$
for $t\in [0,T]$. 
The above argument being valid for any $\varphi \in C_b(\O)$. This means that $\zeta_{t}$ is a mild  solution of \eqref{main} in  $L^{\infty}([0,T],(\M^+(\O), \mathrm{weak-}*))$.
Our next aim is to show that the measures $\nu_t$ and $\zeta_t$ are absolutely continuous 
w.r.t.~the Lebesgue measure in $\O$.
For this purpose, recall that $\tilde f^*\in C^1(\R_+^*, L^1(\O))\cap C^0(\R_+,L^1(\O))$  is a solution to the system
\[\begin{dcases}
\partial_t \tilde f(t,z) = \mu\left(M^*(\tau)+s_{in} - \frac{M^*(\tau)}{M^*(0)}\int_{\O}\tilde f(t,z)\,dz,z\right)\tilde f(t,z) &\text{ for }\; t>0, z\in\O,\\
\tilde f(0,z)=f_0(z) &\text{ for  }\; z\in\O.
\end{dcases}
\]
Now, we can take $\varphi=1$ in \eqref{eq:mild-nut}. This gives for $t\in [0,T]$: % we have 
$$ \int_{\O}d\nu_t(z) = \int_{\O} f_0(z)\,dz + 
\iint_{\tilde K_t}\mu\left(M^*(\tau)+s_{in} - \frac{M^*(\tau)}{M^*(0)}\int_{\O}d\nu_\tau(z),z\right) \,d\nu_{\tau}(z)\,d\tau.$$
As well, since $\tilde f^*$ is also a  mild solution, we have for $t\in [0,T_0]$:
$$ \int_{\O}\tilde f^*(t,z)\,dz = \int_{\O} f_0(z)\,dz + 
\iint_{\tilde K_t}\mu\left(M^*(\tau)+s_{in} - \frac{M^*(\tau)}{M^*(0)}\int_{\O}\tilde f^*(t,z)dz,z\right)\tilde f^*(t,z)\,dz\,d\tau.
$$
We now argue as in the uniqueness proof of \Cref{thm:existence2}.
For $t\in [0,T]$, let us set $h(t):= \int_{\O}d\nu_t(z) -\int_{\O}\tilde f^*(t,z)\,dz $. We then obtain 
$$
\forall t\in [0,T], \; \; |h(t)|\le \upsilon\int_{0}^t|h(\tau)|\,d\tau,
$$
and by using Gr\"onwall's inequality, we deduce that for all $t\in [0,T]$, one has
$h(t)=0$. By using again the uniqueness proof of \Cref{thm:existence2}, we can check that for all $\varphi \in C_b(\O)$, one has
$$\forall t\in [0,T], \; \; \ds{\int_{\O}\varphi(z)\,d\nu_t(z)=\int_{\O}\varphi(z)f^*(t,z)\,dz}.$$
Hence, $\nu_t$ is absolutely continuous w.r.t.~the Lebesgue measure and, by the Radon-Nykodim Theorem, we deduce that $\frac{d\nu_t}{dz}=\tilde f^*$ for all $t\in[0,T]$. 
By definition of $\zeta_t$, we conclude as well that $\zeta_t$ is absolutely continuous w.r.t.~the Lebesgue measure and that we have  
$$\forall t\in [0,T], \; \; \frac{d\zeta_t}{dz}=e^{-\int_0^tu^*(\tau)\,d\tau}\tilde f^*.$$
We can now conclude that $T^*$ is the optimal time. 
Doing so, recall that  the operator $\k$ is invariant by scaling, so,  by using the weak-* convergence, we have that 
$$
\opk{f_n(t,\cdot)}=\opk{\tilde f_n(t,\cdot)} \to \opk{\nu_t}=\opk{\zeta_{t}}=\opk{f^*(t,\cdot)}.
$$
As a consequence, as in the case $\alpha>0$, we can check that for all $t\in(T^*,T)$, one has $\opk{f^*(t,\cdot)}\le k_0$ implying that $\opk{ f^*(t_0,\cdot)}\le k_0$.  
Again, the above argument is true for any arbitrary $t_0>T^*$. As a consequence, we have
$$\forall t\ge T^*, \; \; \opk{f^*(t,\cdot)}\le k_0.$$
Hence, $f^*$ enters permanently into the target set $\T_0$ in the minimal time $T^*$.

\end{proof}
As a final conclusion, regardless of whether $\alpha = 0$ or $\alpha > 0$, we have shown that there exists an admissible control $u^*$ leading to a  solution $f^*$ of~\eqref{main} that reaches and stays in the target set in minimal time $T^*$, thereby completing the proof of Theorem~\ref{thm-bigBoss}.\\

%\CA{Clau: I think the proof is correct now, I see no problem with the times! \\ Just one question, can we say anything about the smoothness of the optimal control $u^*$? I don't see clearly if we can say something and depending on this we can state that the solution will be mild or strong, but I don't know if this is relevant from an optimal control point of view, is just an observation. } \JC{ That is a good question, for the moment I don't know. Maybe finding  optimality condition would help to describe the regularity of the control.}

Next, synthesizing an optimal control  requires to establish necessary optimality conditions for \eqref{OCP0}. Although system \eqref{main} presents similarities with parabolic equations, it contains a non-local term. For this reason, deriving such conditions in our setting is beyond the scope of this paper, instead, we address this issue through numerical simulations.
\subsection{Numerical simulations of sub-optimal controls}\label{sec-existence-2}
For both theoretical and numerical reasons, the search for optimal controls is performed in two distinct classes. First, we consider the class of auxostat-type controls as defined in (iv); then, we examine the simpler class of constant controls.

\subsubsection{Time minimization in the class of auxostat-type controls (iv)}

Following the numerical scheme described in Section~\ref{sec-num-auxostat}, we performed an extensive search over 340 values of $\sigma \in (0,s_{in})$ to determine the first entry time into the target set for solutions to~\eqref{main} under the auxostat-type control (iv). This procedure allows us to identify, in a straightforward manner, an optimal pair $(\sigma^*, T(\sigma^*))$ minimizing the entry time.
\begin{figure}[htbp!]%R_sigma.pdf=fig2.pdf
\centering
\begin{subfigure}{0.48\textwidth}
  \includegraphics[width=\linewidth]{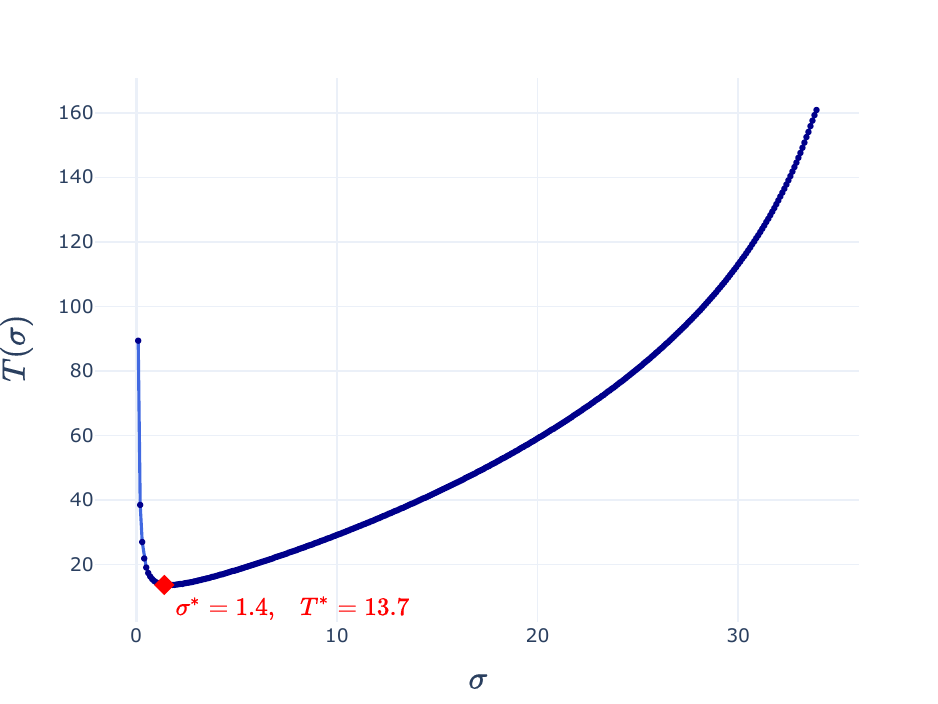}
  \caption{Entry time $T(\sigma)$ to the target set for $\sigma\in(0,s_{in})$ and solutions of \eqref{main} with autoxtat control. }
  \label{Fig:2-a}
\end{subfigure}\hfill
\begin{subfigure}{0.48\textwidth}%Kt_sigma.pdf=fig3.pdf
  \includegraphics[width=\linewidth]{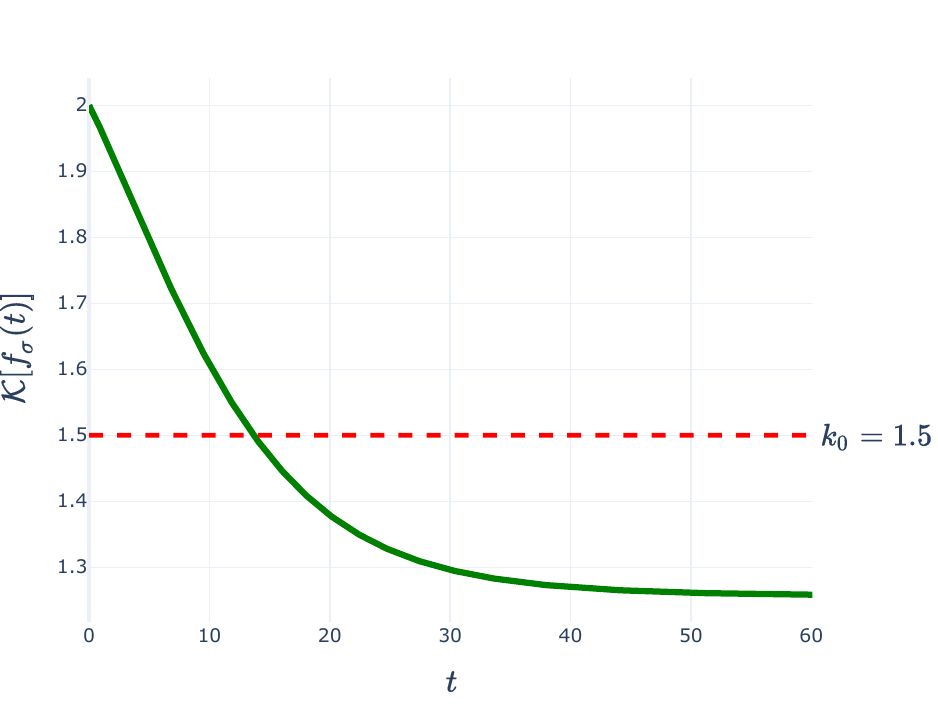}
  \caption{Function $\opk{f(t,\cdot)}$ evaluated on the solution of \eqref{main} at the optimal value $\sigma^*=1.4$, for each time step.}
  \label{Fig:2-b}
\end{subfigure}

\medskip

\begin{subfigure}{0.48\textwidth}%fT_sigma.pdf=fig4.pdf
  \includegraphics[width=\linewidth]{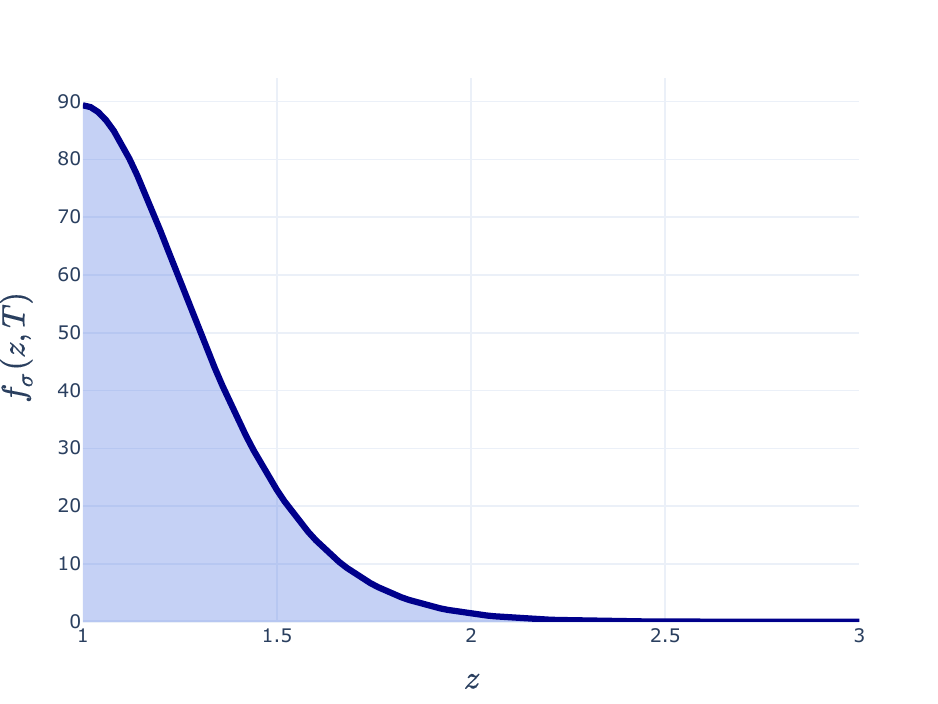}
  \caption{Final distribution of the solution of \eqref{main} for auxostat-type control using $\sigma=1.4$. }
  \label{fig:2-c}
\end{subfigure}\hfill
\begin{subfigure}{0.48\textwidth}%u-s-R_sigma.pdf=fig5.pdf
  \includegraphics[width=\linewidth]{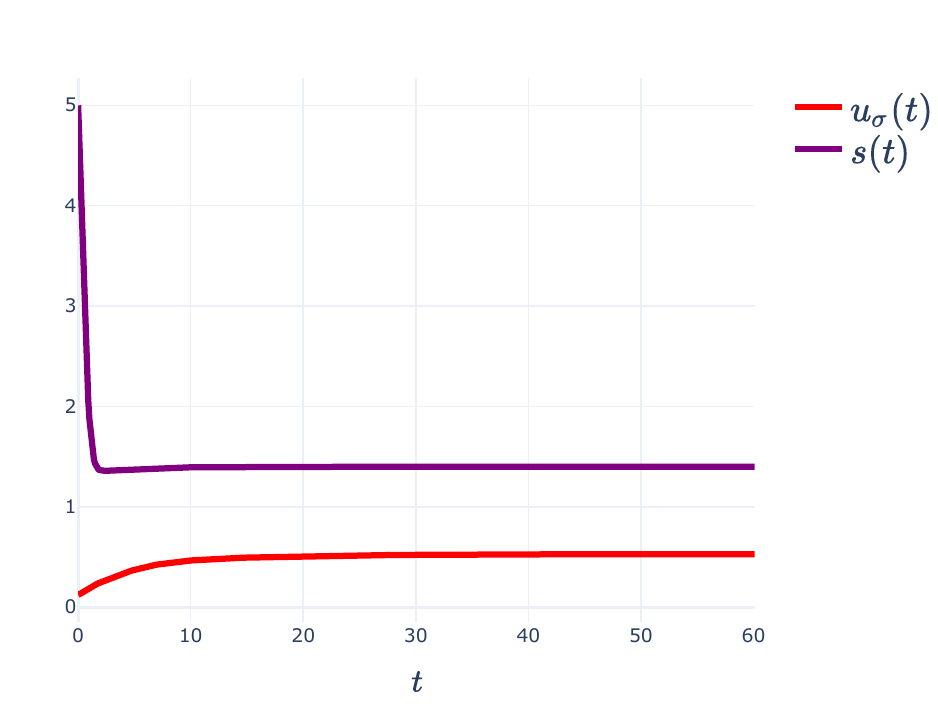}
  \caption{Auxostat-type control $u_{\sigma^*}$ for the optimal $\sigma$ value and the produced $s(t)$ values. }
  \label{fig:2-d}
\end{subfigure}

\caption{}
\end{figure}
Using the same parameter values as in the aforementioned section and a final time horizon of $T = 60$, we find that the optimum is attained for $\sigma^* = 1.4$, with $T(\sigma^*) = 13.7$, as shown in Figure~\ref{Fig:2-a}. For this optimal value, Figure~\ref{Fig:2-b} displays the mean half-saturation functional at each time, given by $\opk{f(t,\cdot)}$, where $f$ denotes the solution to~\eqref{main} corresponding to $\sigma^* = 1.4$. We observe that, for the threshold $k_0 = 1.5$, the solution reaches the target set and remains within it until the final time, indicating that the entry time can be considered unique. 
Figure~\ref{fig:2-c} shows the final biomass distribution associated with this optimal solution. The distribution closely resembles the expected equilibrium profile, suggesting convergence toward a stable state. Recall from Lemma~\ref{lem:autoxat-s-et-m} that, under auxostat-type control, the substrate concentration $s(t)$ converges to $\sigma$. This behavior is illustrated in Figure~\ref{fig:2-d}, where the control rapidly drives the substrate trajectory toward $\sigma^*$.

\subsubsection{Time minimization among constant controls}
Similarly as before, we conducted an extensive search over 800 values of $u \in (0,8)$ to determine the first entry time into the target set for solutions of system~\eqref{main} under constant control. This approach also allows us to identify an optimal pair $(u^*, T(u^*))$ that minimizes the entry time. The primary motivation for this analysis is to compare the auxostat-type control strategy with a simpler control formulation. Indeed, constant controls are easier to implement in practical experiments, making it relevant to assess their performance relative to more sophisticated strategies such as the auxostat-type. 
Constant controls are also relevant, as they are connected to the eventual turnpike property of the system \cite{trelat2015turnpike}. 
\begin{figure}[htbp!]
\centering
\begin{subfigure}{0.48\textwidth}%R_u.pdf=fig6.pdf
  \includegraphics[width=\linewidth]{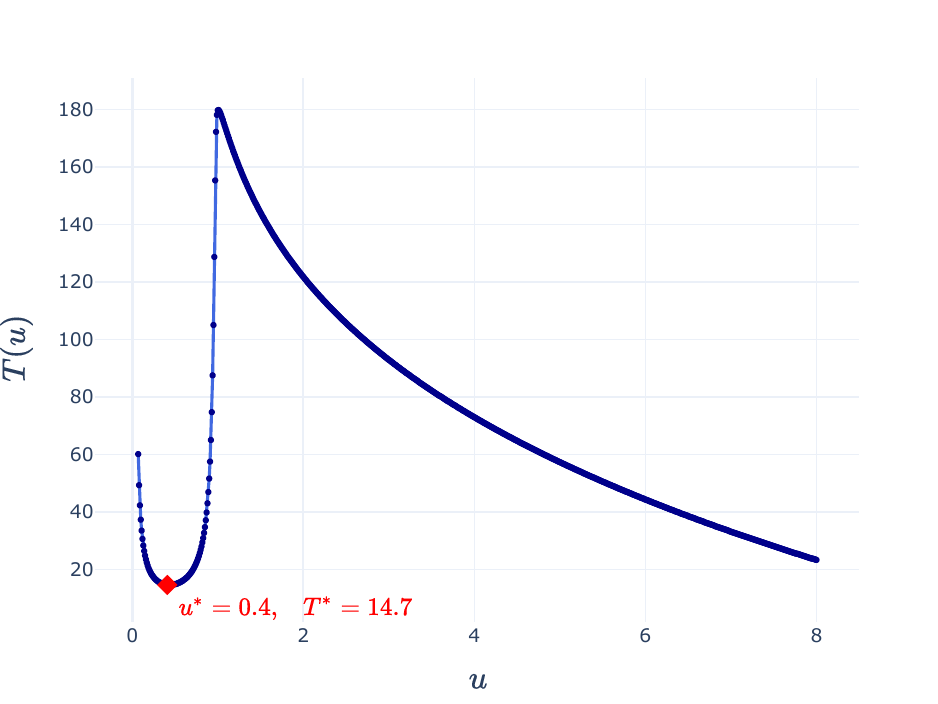}
  \caption{Entry time $T(u)$ to the target set for $u\in(0,8)$ and solutions of \eqref{main} with constant control $u$. }
  \label{Fig:3-a}
\end{subfigure}\hfill
\begin{subfigure}{0.48\textwidth}%Kt_u.pdf=fig7.pdf
  \includegraphics[width=\linewidth]{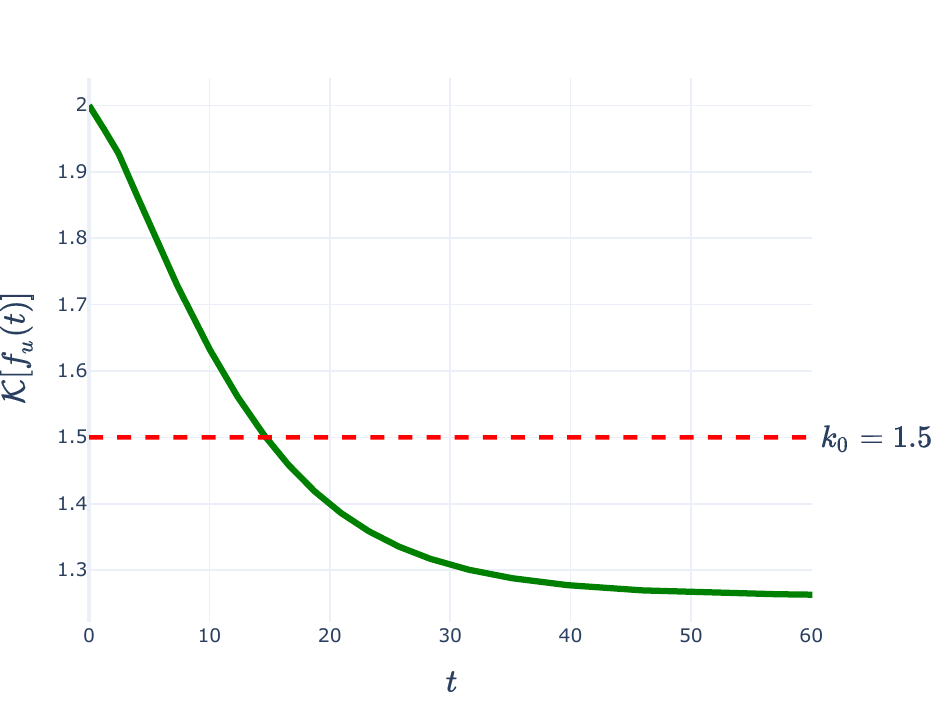}
  \caption{Function $\opk{f(t,\cdot)}$ evaluated on the solution of \eqref{main} at the optimal value $u^*=0.4$, for each time.}
  \label{Fig:3-b}
\end{subfigure}

\medskip

\begin{subfigure}{0.48\textwidth}%fT_u.pdf=fig8.pdf
  \includegraphics[width=\linewidth]{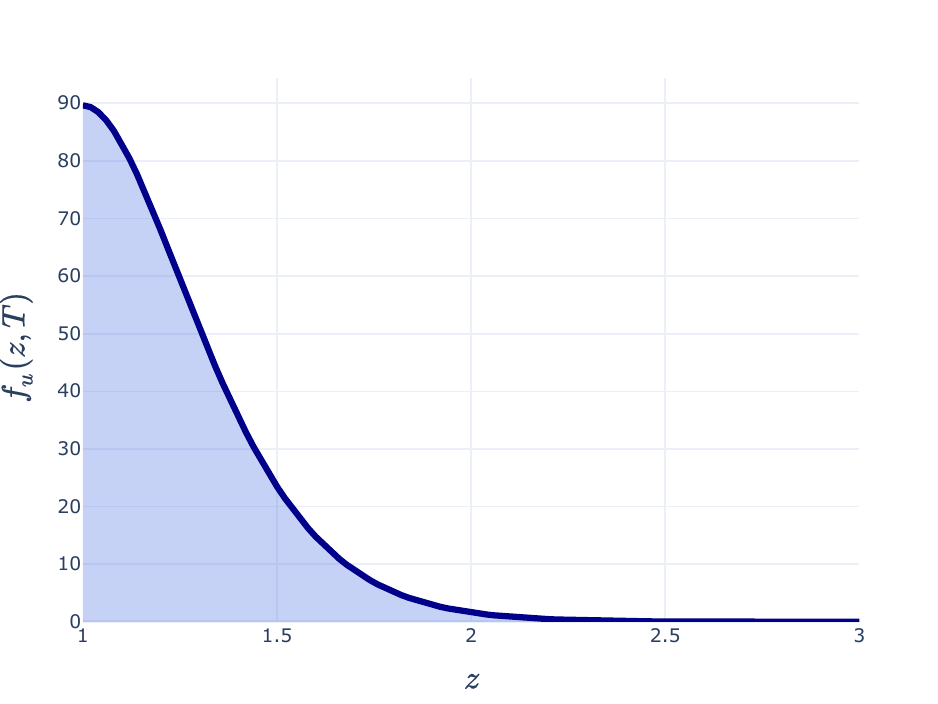}
  \caption{Final distribution of the solution of \eqref{main} for constant control using $u=0.4$. }
  \label{fig:3-c}
\end{subfigure}\hfill
\begin{subfigure}{0.48\textwidth}%u-s-R_u.pdf=fig9.pdf
  \includegraphics[width=\linewidth]{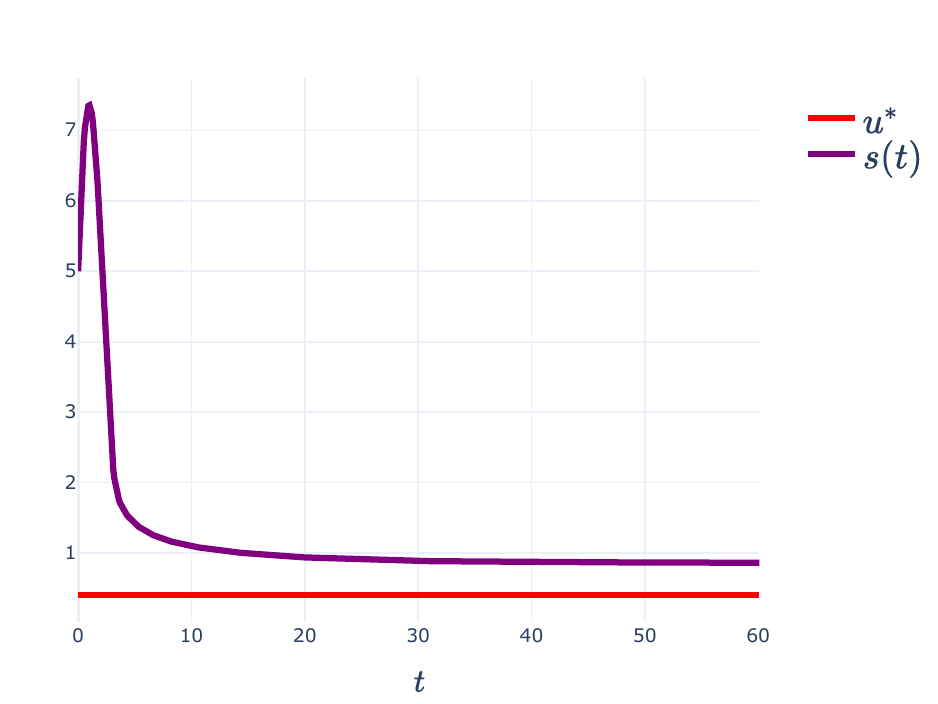}
  \caption{Constant control $u=u^*$ for the optimal value and the produced $s(t)$ values. }
  \label{fig:3-d}
\end{subfigure}

\caption{}
\end{figure}
Using the same parameter values as in the previous section and a final time horizon of $T = 60$, we find that the optimum is attained for $u^* = 0.4$, with an entry time of $T(u^*) = 14.7$, as shown\footnote{Even though another minimum may exist for $u>8$, it was not considered since it leads to washout.} in Figure~\ref{Fig:3-a}. For this optimal value, Figure~\ref{Fig:3-b} displays the evolution of the mean half-saturation functional, given by $\opk{f(t,\cdot)}$, where $f$ denotes the solution of~\eqref{main} corresponding to $u^* = 0.4$. We observe that, for the threshold $k_0 = 1.5$, the solution reaches the target set and remains within it until the final time, indicating that the entry time can be considered unique. 
Although convergence to an equilibrium was not analytically addressed for constant controls, Figures~\ref{fig:3-c} and~\ref{fig:3-d} suggest that the final biomass distribution closely resembles the expected equilibrium profile and that the substrate concentration $s(t)$ converges to a value of approximately $0.86$, indicating convergence toward a stable state (although convergence of solutions to \eqref{main} in this regime is, at present, unclear).
We conclude that  the minimal entry times obtained for constant controls are comparable to those achieved with the auxostat-type formulation.

\smallskip
\section{Concluding remarks and perspectives}{\label{conclusion}}

In this paper, we have investigated qualitative properties of the solutions to a chemostat system with trait structure (in the spirit of \cite{Perthame2007}) incorporating an interspecies exchange term (see \cite{alvarez2026global,bayen2023stability}). These properties have been studied from the perspective of optimal control. In particular, we established the following results:
\begin{itemize}
\item[$\bullet$] Well-posedness of the input–output map for a given measurable control (Theorem \ref{th-existence});
\item[$\bullet$] Stabilization of the system around a desired stationary solution via an auxostat-type control (Theorem \ref{th:stabi-other});
\item[$\bullet$] Analysis of a minimal-time problem for species selection: attainability of the target and existence of an optimal control 
(Proposition \ref{prop:controlability} and Theorem \ref{thm-bigBoss}). 
\end{itemize}
As concerns the attainability result, we obtained a property stronger than simple reachability of the target: namely, we have shown that the target is not only attainable, but also that the underlying system can be kept within it thereafter.
The originality of our work lies in introducing control questions within this framework. Indeed, considering a phenotypic trait makes these problems significantly more challenging to address than in the classical case of a finite number of species.
A natural extension of our stabilization results is about the asymptotic behavior of solutions to \eqref{main} in the case of a constant dilution rate.  
This question of significant interest could be addressed whenever the mutation rate goes to zero (in the spirit of \cite{alvarez2026global,bayen2023stability} and as an extension of the competitive exclusion principle in \cite{Perthame2007}). Since it is quite involved, it is left for further investigation. As a continuation of the minimum-time control problem, a second major question concerns the derivation of necessary optimality conditions for optimal control problems governed by \eqref{main}, considering for instance a general objective function.  
Because the population equation is non-local, system \eqref{main} does not fit precisely into the standard theory of parabolic control systems.
Hence, this natural question is more involved (particularly because of the particular structure of \eqref{main}) and lies beyond the scope of the present work. Finally, applying direct optimization methods to the underlying high-dimensional system (comprising several hundred species) is a major challenge and a complex problem that merits more in-depth future study.
\begin{comment}
\end{comment}
\color{black}
\section*{Acknowledgements}
The authors thank Francis Mairet and Olivier Cots for fruitful discussion on the subject. J\'er\^ome Coville is supported by ANR ReaCH-23-CE40-0023. J\'er\^ome Coville thanks Filippo Santambrogio, Thomas Lepoutre and L\'eon Matar Tine for fruitful discussions.

\appendix
\section{An Aubin-Lions type lemma in the space of finite Radon measures}
In this appendix, we provide a detailed proof of  \Cref{lem:radon-compact}. 
First, observe that for $t=0$ one has $\tilde f_n(0,\cdot)=f_0$ for all $n\in \mathbb{N}$, hence, $f_n(0,\cdot)\stackrel{\ast}{\rightharpoonup} \nu_0=f_0(z)\,dz$ as $n\to +\infty$.
Recall also  the following uniform bounds for all $\o\subset\O$:
\begin{align}
&\forall t\in(0,T], \; \; \forall n \in \mathbb{N}^*, \; \; \int_{\o}\tilde f_n(t,z)\,dz\le e^{C_0T}\int_{\o}f_0(z)\,dz \le K_1,\label{eq:radon-esti1}\\
&\forall t\in(0,T], \; \; \forall n \in \mathbb{N}^*, \; \; \left| \int_{\o}\partial_t \tilde f_n(t,z)\,dz\right|\le C_0K_1.\label{eq:radon-esti2}
\end{align}
Via the canonical identification of $L^1$ with a subset of 
$\M(\O)$, we denote, for each $t\in [0,T]$, by 
$\nu_n(t,z)$, the positive Radon measure corresponding to 
$\tilde f_n(t,z)dz$. 
The estimates then give
$$
\forall n \in \mathbb{N}, \; \; \forall t\in [0,T], \; \; \nu_n(t,\O)<K_1. 
$$
Next, for each fixed $t\in[0,T]$, the Banach-Alaoglu Theorem (see \cite{Brezis2010}) implies that a subsequence of  $(\nu_n(t,\cdot))$  converges weak-$*$ to some bounded Radon measure 
$\nu_t\in \M(\O)$. So by using a diagonal extraction procedure, we can find a subsequence  $(\nu_{n_k})_{k\in \N}$ such that 
 $\nu_{n_k}(t_q,\cdot) \stackrel{\ast}{\rightharpoonup} \nu_{t_q}$ for any $t_q\in[0,T]\cap \Q$. 
The rest of the proof is divided into several steps. 
\smallskip

\noindent {\it{First step : weak-$*$ convergence of $(\nu_{n_k}(t_q,\cdot))_{k\in\N}$ on indicator functions of Borel sets}}. 
Our goal is to prove that for every Borel set $\o\subset \Omega$, one has:
$$\lim_{k \rightarrow +\infty} \, \int_{\O}\mathds{1}_{\o}(z)\,d\nu_{n_k}(t_q,z) = \int_{\O}\mathds{1}_{\o}(z)\,d\nu_{t_q}(z). $$
Let $\eps>0$. Since $\nu_{t_q}$ is a Radon measure, for all $\delta>0$, there exist a compact set $F_\delta\subset \R^d$ and an open set $G_\delta\subset \R^d$ such that 
$F_\delta \subset \o \subset G_\delta$ and satisfying
$$|G_\delta\setminus F_\delta|\le \delta \quad\text{ ; } \quad \nu_{t_q}(G_\delta\setminus F_\delta)\le \delta.$$
Observe that $\O\setminus G_\delta$ is a closed subset of $\O$ and that $F_\delta \cap (\O\setminus G_\delta)=\emptyset$. Since $\O$ is separable, by the Uryshon Lemma, there exists 
a continuous function $\varphi_{\delta}:\Omega \rightarrow [0,1]$
such that 
$$
\varphi_\delta(x)=
\left\{
\begin{array}{rll}
1 & \mathrm{if} & x\in F_\delta,\\
0 & \mathrm{if} & x\in \Omega\setminus G_\delta.
\end{array}
\right.
$$
Now by using that  $f_n\ge 0$ and that $ \varphi_{\delta}\mathds{1}_{\o}\ge \varphi_\delta$, we deduce that 
\begin{align*}
    \int_{\O}\mathds{1}_{\o}(z)\,d\nu_{n_k}(t_q,z)&=\int_{\O}\mathds{1}_{\o}(z)\varphi_{\delta}(z)\,d\nu_{n_k}(t_q,z) +   \int_{\O}\mathds{1}_{\o}(z)(1-\varphi_{\delta}(z))\,d\nu_{n_k}(t_q,z),\\
    &\ge \int_{\O}\mathds{1}_{\o}(z)\varphi_{\delta}(z)\,d\nu_{n_k}(t_q,z),\\
    &\ge \int_{\O}\varphi_{\delta}(z)\,d\nu_{n_k}(t_q,z).
\end{align*} 
Therefore, one has
\begin{equation}\label{eq:radon-liminf}
   \liminf_{k\to +\infty}  \int_{\O}\mathds{1}_{\o}(z)d\nu_{n_k}(t_q,z) \ge \int_{\O}\varphi_{\delta}(z)\,d\nu_{t_q}(z)\ge \nu_{t_q}(F_\delta)\ge \nu_{t_q}(\o) -\delta.
\end{equation}
On the other hand, by using \eqref{eq:radon-esti1}, we have 
\begin{align*}
\int_{\O}\mathds{1}_{\o}(z)\,d\nu_{n_k}(t_q,z)&=
    \int_{\O}\mathds{1}_{F_\delta}(z)\,d\nu_{n_k}(t_q,z) +   \int_{\O}\mathds{1}_{\o}(z)(1-\mathds{1}_{F_\delta}(z))\,d\nu_{n_k}(t_q,z),\\
    &\le \int_{\O}\mathds{1}_{F_\delta}(z)\,d\nu_{n_k}(t_q,z) + \int_{\o\setminus F_\delta} \,d\nu_{n_k}(t_q,z)\\
    &\le \int_{\O}\mathds{1}_{F_\delta}(z)\,d\nu_{n_k}(t_q,z) + e^{C_0T}\int_{\o\setminus F_\delta} f_{0}\,dz.
    \end{align*}
    Consequently, since $\varphi_\delta\mathds{1}_{F_\delta}\le \varphi_\delta$, we find that 
\[
    \int_{\O}\mathds{1}_{\o}(z)\,d\nu_{n_k}(t_q,z)\le \int_{\O}\varphi_\delta(z)\,d\nu_{n_k}(t_q,z) + e^{C_0T}\int_{\o\setminus F_\delta} f_{0}\,dz.\]
This yields
\[  \limsup_{k\to +\infty}  \int_{\O}\mathds{1}_{\o}(z)d\nu_{n_k}(t_q,z) \le \int_{\O}\varphi_{\delta}(z)\,d\nu_{t_q}(z)  + e^{C_0T}\int_{\o\setminus F_\delta} f_{0}\,dz.\]
   By using that $\varphi_\delta \equiv 0$ over $\O\setminus G_\delta$, we deduce that $\ds{\int_{\O}\varphi_{\delta}(z)\,d\nu_{t_q}(z)\le \nu_{t_q}(G_\delta)}$ and therefore
   \begin{equation}\label{eq:radon-limsup}
   \limsup_{k\to +\infty}  \int_{\O}\mathds{1}_{\o}(z)d\nu_{n_k}(t_q,z) \le \nu_{t_q}(\o) +\delta + e^{C_0T}\int_{\o\setminus F_\delta} f_{0}\,dz.
\end{equation}
   Since the function $f_0$ is uniformly integrable (it is in $L^1$),  there is 
   $\delta_0>0$ such that for all set $A$,  
   $$|A| \leq \delta_0 \quad \Rightarrow \quad \ds{\int_{A}f_0(z)dz\le \frac{\eps}{2 e^{C_0T}}}.$$ 
  Take $\delta \le \min(\frac{\eps}{2},\delta_0)$. From \eqref{eq:radon-liminf}-\eqref{eq:radon-limsup}, we get that  
   $$ \nu_{t_q}(\o) -\eps\le  \liminf_{k\to +\infty}  \int_{\O}\mathds{1}_{\o}(z)d\nu_{n_k}(t_q,z) \le \limsup_{k\to +\infty}  \int_{\O}\mathds{1}_{\o}(z)d\nu_{n_k}(t_q,z)\le \nu_{t_q}(\o) + \eps. $$
  Since $\eps>0$ is arbitrary, we conclude that 
   $$ \nu_{t_q}(\o) \le  \liminf_{k\to +\infty}  \int_{\O}\mathds{1}_{\o}(z)d\nu_{n_k}(t_q,z) \le \limsup_{k\to +\infty}  \int_{\O}\mathds{1}_{\o}(z)d\nu_{n_k}(t_q,z)\le \nu_{t_q}(\o).$$ 
   This concludes the first step. 

\smallskip

\noindent {\it{Second step: Regularity of the measure}}. From \eqref{eq:radon-esti2} and from the previous step, for any $t_q,t_{q'} \in [0,T]\cap\Q$,  we have for any $\o\subset \O$:
\begin{align*}
|\nu_{t_q}(\o)-\nu_{t_{q'}}(\o)|&\le \left|\lim_{k\to +\infty}\left(\int_{\o} \tilde f_{n_k}(t_{q},z)-\tilde f_{n_k}(t_{q'},z) \,dz\right)\right| \\ 
&\le|t_{q}-t_{q'}| \left|\lim_{k\to +\infty}\left(\int_{\o}\int_{0}^1 \partial_t\tilde f_{n_k}(t_{q'}+s(t_{q}-t_{q'}),z) \,dzds\right) \right|\\
&\le   K_1C_0|t_{q}-t_{q'}|. 
\end{align*}
 As a result, for any $t\in[0,T]$, for any sequence $(t_{q_n}) \in ([0,T]\cap \Q)^\mathbb{N}$ such that $t_{q_n}\to t$ and for any $\o\subset\O$, $(\nu_{t_{q_n}}(\o))_n$ is Cauchy.  
Therefore, it converges to a unique limit.   
Moreover, we can define a measure on the Borel $\sigma$-algebra of $\O$ as follows: 
 $$ \nu_t =\sup_{t_q\le t}\nu_{t_q}. $$
 Since $(\nu_{t_q})_q$ are positive bounded Radon measured and the set $\Q\cap [0,T]$ is dense in $[0,T]$, we obtain that  $\nu_t$ is a positive bounded Borel measure over $\O$. 
\smallskip

\noindent {\it{Third step: Weak$-\ast$ convergence to $\nu_t$}}. 
To conclude, we need to prove that for all $t\in[0,T]$,  $\nu_{n_k}(t,\cdot)\stackrel{\ast}{\rightharpoonup} \nu_t$ as $k\to +\infty$.
 Since $(\M(\O),\mathrm{weak-}*)$ is a complete separable metric space,  we start by showing  that for any $t\in [0,T]$, the sequence $(\nu_{n_k}(t,\cdot))_{k\in \N}$ is Cauchy in $(\M(\O),\mathrm{weak-}*)$.   Doing so,  given $\varphi \in C(\O)$, we need to prove that the sequence $(\int_{\O}\varphi(z)\,d\nu_{n_k}(t,z))_{k\in\N}$
is Cauchy. Let $\eps>0$. First, since $\bar \O$ is a compact subset of $\R^d$, by the Stone-Weierstrass theorem \cite{Rudin1987}, there exist finitely many $\o_i\subset \O$ and $\gamma_i\in\R$ such that 
$\|\varphi -\sum_{i}\gamma_i\mathds{1}_{\o_i}(\cdot)\|_{\infty}<\frac{\eps}{4K_1}$.
Set $\varphi_\eps:= \sum_{i}\gamma_i\mathds{1}_{\o_i}(\cdot)$. Then, for any $n_k, n_{k'}$, by using the definition of the measure $\nu_{n_k}(t,z)$, we have  
\begin{multline*}
\left|\int_{\O}\varphi(z)d\nu_{n_k}(t,z) -\int_{\O}\varphi(z)\,d\nu_{n_{k'}}(t,z)\right| \le \|\varphi -\varphi_\eps\|_{\infty}\left(\int_{\O}d\nu_{n_k}(t,z)+\int_{\O}d\nu_{n_{k'}}(t,z)\right)\\ + \left|\int_{\O}\varphi_\eps(z)[\tilde f_{n_k}(t,z) -\tilde f_{n_{k'}}(t,z)]\,dz\right|.
\end{multline*}
Using \eqref{eq:radon-esti1}, we obtain
\begin{equation}\label{eq:radon-esti-phi}
\left|\int_{\O}\varphi(z)d\nu_{n_k}(t,z) -\int_{\O}\varphi(z)\,d\nu_{n_{k'}}(t,z)\right| \le
\frac{\eps}{2}+ \left|\int_{\O}\varphi_\eps(z)[\tilde f_{n_k}(t,z) -\tilde f_{n_{k'}}(t,z)]\,dz\right|.
\end{equation}
Using the  density of  $\Q\cap[0,T]$  in $[0,T]$, let $t_{q}\in(0,T]\cap\Q$ be such that $|t-t_{q}|<\frac{\eps}{6C_0K_1\sum_{i}|\gamma_i|}$.
Thanks to the triangular inequality, we deduce that for all $n_k,n_{k'}$,   
\begin{multline*}
\left| \int_{\O}\varphi_{\eps}(z)(\tilde f_{n_k}(t,z)-\tilde f_{n_{k'}}(t,z))dz\right|\le \left| \int_{\O}\varphi_\eps(z)(\tilde f_{n_k}(t,z)-\tilde f_{n_{k}}(t_q,z))dz\right| \\ \qquad \qquad \qquad \qquad \qquad+\left| \int_{\O}\varphi_\eps(z)(\tilde f_{n_k}(t_q,z)-\tilde f_{n_{k'}}(t_q,z))dz\right|\\+\left| \int_{\O}\varphi_{\eps}(z)(\tilde f_{n_{k'}}(t_q,z)-\tilde f_{n_{k'}}(t,z))dz\right|.
\end{multline*}
From the definition of $\varphi_\eps$, \eqref{eq:radon-esti2}, and the choice of $t_q$, 
the first integral in the  above right hand side satisfies
\begin{align} 
\left| \int_{\O}\varphi_\eps(z)(\tilde f_{n_k}(t,z)-\tilde f_{n_{k}}(t_q,z))dz\right|&=\left|\sum_{i} \gamma_i \int_{\o_i}(\tilde f_{n_k}(t,z)-\tilde f_{n_{k}}(t_q,z))dz\right|\nonumber\\  
&\le \sum_{i} |\gamma_i|  \left|\int_{\o_i}(\tilde f_{n_k}(t,z)-\tilde f_{n_{k}}(t_q,z))dz\right|\nonumber\\
&\le \sum_{i} |\gamma_i| |t-t_q|  \left|\int_{0}^1\int_{\o_i}\partial_t\tilde f_{n_k}(t_q+s(t-t_q),z)dzds\right| \nonumber\\
&\le \sum_{i} |\gamma_i| |t-t_q|  C_0K_1=\frac{\eps}{6}. \label{eq:radon-esti-int1}
\end{align}
Similarly, we get 
\begin{equation}\label{eq:radon-esti-int2}
\left| \int_{\O}\varphi_\eps(z)(\tilde f_{n_{k'}}(t_q,z)-\tilde f_{n_{k'}}(t,z))dz\right|\le \frac{\eps}{6}.
\end{equation}
Finally, recall that $\nu_{n_k}(t_q,\cdot) \stackrel{\ast}{\rightharpoonup} \nu_{t_q}$, for indicator functions (first step). As a result, 
the sequence $\left(\int_{\O}\varphi_\eps(z)\tilde f_{n_k}(t_q,z)dz\right)_{k\in\N}$ is  Cauchy , and there exists $N>0$ such that for all $n_{k},n_{k'}>N$:%, we have 
\begin{equation}\label{eq:radon-esti-int3}
    \left| \int_{\O}\varphi_\eps(z)(\tilde f_{n_k}(t_q,z)-\tilde f_{n_{k'}}(t_q,z))dz\right|\le \frac{\eps}{6}. 
\end{equation}
Combining \eqref{eq:radon-esti-phi}-\eqref{eq:radon-esti-int1}-\eqref{eq:radon-esti-int2}-\eqref{eq:radon-esti-int3}, we deduce that for all $n_k, n_{k'}> N$,
\[\left|\int_{\O}\varphi(z)d\nu_{n_k}(t,z) -\int_{\O}\varphi(z)\,d\nu_{n_{k'}}(t,z)\right|\le
\frac{\eps}{2}+ \frac{\eps}{6}+ \frac{\eps}{6}+ \frac{\eps}{6}=\eps.\]
At this step, the sequence $(\int_{\O}\varphi d\nu_{n_k}(t,z))_{k\in\N}$ converges, but, we need  to identify its limit. From the Stone-Weierstrass Theorem \cite{Rudin1987}  and the above argumentation, we only need to identify this limit when $\varphi$ is an indicator function of a Borel set. This amounts to prove that for any Borel set $\o$, $\nu_{n_k}(t,\o)\to \nu_{t}(\o)$ as $k\rightarrow +\infty$. 
Again, let $\o$ be a Borel set and let $\eps>0$.  By definition of $\nu_t$ (second step), there is a sequence  $(t_{q_i})_{i\in\N}$ such that $t_{q_i}\to t$ and  $ \nu_{t_{q_i}}(\o)\le \nu_t(\o)\le \nu_{t_{q_i}}(\o)+\frac{\eps}{3}$ for all $i\ge0$. Since $t_{q_i}\to t$ as $i\to+\infty$, by using \eqref{eq:radon-esti2}, we can find $i_0$, such that $ \left| \int_{\o}d\nu_{n_k}(t,z)-\int_{\o}d\nu_{n_k}(t_{q_{i_0}},z)\right|\le \frac{\eps}{3}$  for all $n_k\in \mathbb{N}$. The triangular inequality then implies
\[
\left| \int_{\o}d\nu_{n_k}(t,z)-\int_{\o}d\nu_t(z)\right|\le \frac{2\eps}{3} + \left| \int_{\o}d\nu_{n_k}(t_{q_{i_0}},z)- \int_{\o}d\nu_{t_{q_{i_0}}}(z)\right|.
\]
We  conclude by using the first step and the fact that $\nu_{n_k}(t_{i_0},\o)\to \nu_{t_{i_0}}(\o)$ as $k\to+\infty$.

\section*{}
\bibliographystyle{amsplain}
\bibliography{bibliography.bib}

\end{document}